\documentclass[11pt,reqno]{amsart}
\usepackage{amsmath,amsfonts,amssymb,amscd,amsthm,amsbsy,bbm, epsf,calc,  pdfsync, comment, caption}
\usepackage{color}
\usepackage[usenames,dvipsnames]{xcolor}
\usepackage{datetime}

\usepackage{thmtools, thm-restate}
\usepackage{thm-patch, thm-kv, thm-autoref}
\usepackage[colorlinks=true, urlcolor=blue, linkcolor=blue, citecolor=black]{hyperref}

\usepackage[pdftex]{graphicx}
\usepackage{wrapfig}
\usepackage{geometry}
\geometry{ margin=1.07in}



\numberwithin{equation}{section}
\setcounter{secnumdepth}{2}
\setcounter{tocdepth}{2}

\newcounter{hours}\newcounter{minutes}


\theoremstyle{plain}
\declaretheorem[title=Theorem, parent=section]{thm}
\declaretheorem[title=Lemma,sibling=thm]{lem}
\declaretheorem[title=Corollary,sibling=thm]{cor}
\declaretheorem[title=Proposition,sibling=thm]{prop}
\declaretheorem[title=Definition,sibling=thm]{DEF}
\declaretheorem[title=Remark,sibling=thm]{rem}
\declaretheorem[title=Question,sibling=thm]{question}

\newtheorem*{prop*}{Proposition}


\def\D{\mathcal D}
\def\I{\mathcal I}
\def\K{\mathcal K}
\def\L{\mathcal L}

\def\real{{\mathbb R}}

\def\Indicator{{\mathbbm{1}}}

\def\ep{\varepsilon}

\def\del{\delta}
\def\om{\omega}
\def\Om{\Omega}
\def\gam{\gamma}

\def\grad{\nabla}
\def\diam{\textnormal{diam}}
\def\hull{\textnormal{hull}}

\def\Tr{\textnormal{tr}}
\def\Id{\textnormal{Id}}

\def\union{\cup}
\def\Union{\bigcup}

\def\polhk#1{\setbox0=\hbox{#1}{\ooalign{\hidewidth
	    \lower1.5ex\hbox{`}\hidewidth\crcr\unhbox0}}}

\newcommand{\abs}[1]{\left| #1 \right|}

\newcommand{\norm}[1]{\lVert#1\rVert}


\begin{document}

\title{Min-max formulas for nonlocal elliptic operators}

\author{Nestor Guillen}
\author{Russell W. Schwab}

\address{Department of Mathematics\\
University of Massachusetts, Amherst\\
Amherst, MA  01003-9305}
\email{nguillen@math.umass.edu}

\address{Department of Mathematics\\
Michigan State University\\
619 Red Cedar Road \\
East Lansing, MI 48824}
\email{rschwab@math.msu.edu}

\begin{abstract}
  In this work, we give a characterization of Lipschitz operators on spaces of $C^2(M)$ functions (also $C^{1,1}$, $C^{1,\gam}$, $C^1$, $C^\gam$) that obey the global comparison property-- i.e. those that preserve the global ordering of input functions at any points where their graphs may touch, often called ``elliptic'' operators. Here $M$ is a complete Riemannian manifold. In particular, we show that all such operators can be written as a min-max over linear operators that are a combination of drift-diffusion and integro-differential parts.  In the \emph{linear} (and nonlocal) case, these operators had been characterized in the 1960's, and in the \emph{local, but nonlinear} case-- e.g. local Hamilton-Jacobi-Bellman operators-- this characterization has also been known and used since approximately since 1960's or 1970s.  Our main theorem contains both of these results as special cases. It also shows any nonlinear scalar elliptic equation can be represented as an Isaacs equation for an appropriate differential game. Our approach is to ``project'' the operator to one acting on functions on large finite graphs that approximate the manifold, use non-smooth analysis to derive a min-max formula on this finite dimensional level, and then pass to the limit in order to lift the formula to the original operator.  This is the Director's cut, and it contains extra details for our own sanity.
\end{abstract}

\date{\today, arXiv Ver 2 : Director's cut}
\thanks{The work of N. Guillen was partially supported by NSF DMS-1201413.  R. Schwab thanks Moritz Kassmann for introducing him to Courr\`ege's theorem.}
\keywords{Global Comparison Principle, Integro-differential operators, Isaacs equation, Whitney extension, Dirichlet-to-Neumann, analysis on manifolds, fully nonlinear equations}
\subjclass[2010]{
35J99,      
35R09,  	
45K05,  	
46T99,   	
47G20,      
49L25,  	
49N70,  	
60J75,      
93E20       
}

\maketitle
\markboth{N. Guillen, R. Schwab}{A min-max formula for nonlinear elliptic operators}


\section{Introduction and Background}\label{sec:Introduction}
\setcounter{equation}{0}

Consider a Lipschitz map $I:C^2_b(\mathbb{R}^d) \to C_b(\mathbb{R}^d)$ with the property that given any functions $u,v\in C^2_b(\mathbb{R}^d)$ and a fixed $x\in \mathbb{R}^d$ such that $u\leq v$ everywhere with $u(x)=v(x)$, then
\begin{align*}
  I(u,x)\leq I(v,x).	
\end{align*}	
Such a map is said to satisfy the \emph{global comparison property} (GCP).  Some of the most basic and frequently encountered maps with the GCP might be 
\begin{align*}
	I(u,x) = |\grad u(x)|,\ 
	I(u,x) = \Delta u(x),\ 
	\text{and}\ 
	I(u,x) = \max_{A^a\geq0}\left( \Tr(A^{a}D^2u(x)) \right)
\end{align*}
in the local case,
or 
\begin{align*}
	I(u,x) = \int_{\real^d}\left(u(x+h)-u(x)\right)K(x,h)dh,\ \text{with}\ K(x,h)\geq 0,
\end{align*}
in the nonlocal case.  For these and similar operators (e.g. general integro-differential or drift diffusion operators), it is straightforward to confirm the GCP because it follows immediately from their explicit formulas.

In this work, we prove a result in the reverse direction, i.e. we show that any (nonlinear) Lipschitz map $I$ with the GCP, plus minor and reasonable technical assumptions, has a representation as a min-max of L\'evy operators similar to those mentioned above, as presented in Theorem \ref{thm: Min-Max Riemannian Manifold}.  That is, $I$ can be written as
\begin{align}\label{eqn:intro min-max formula}
  I(u,x) = \min\limits_{a}\max\limits_{b} \;\{ f^{ab}(x) + L^{ab}(u,x)\},
\end{align}
where each $L^{ab}$ is an operator of L\'evy type, meaning that
\begin{align*}
L^{ab}(u,x) & = \Tr(A^{ab}(x)D^2u(x))+B^{ab}(x)\cdot \nabla u(x)+C^{ab}(x)u(x)\nonumber\\
   & \;\;\;\;+ \int_{\mathbb{R}^d\setminus\{0\}}u(x+y)-u(x)-\Indicator_{B_{r_0}(0)}(y)\nabla u(x)\cdot y \;\mu^{ab}_x(dy),	
\end{align*}	
where $f^{ab},A^{ab},B^{ab},C^{ab}\in L^\infty(\mathbb{R}^d)$ are Borel functions (with norms uniform in $ab$), $A^{ab}\geq 0$, and $\mu^{ab}_x$ are Borel measures on $\mathbb{R}^d\setminus\{0\}$ such that
\begin{align*}
  \sup_{ab}\sup\limits_x\int_{\mathbb{R}^d\setminus\{0\}}\min\{1,|y|^2\}\mu^{ab}_x(dy)<\infty.
\end{align*}
The setting of the main result is more general, and it covers operators $I:C^{2}_b(M)\to C_b(M)$ where $M$ is a complete Riemannian manifold, see Section \ref{subsec: Main Results} for a full description.  Such a min-max characterization for nonlocal, nonlinear operators has been relatively widely known as an open problem in the field of nonlocal equations for a few years, and min-max representations play a fundamental role in many results, which we mention in the Background and Existing Results, Sections \ref{sec:Background}, \ref{sec:ExamplesThatUseMinMaxLocal}, and \ref{sec:ExamplesAssumeMinMaxNonlocal}.

\textbf{Example (Dirichlet to Neumann Maps)} An important class of examples is given by the Dirichlet to Neumann maps for fully nonlinear elliptic equations. Consider, for instance, a bounded domain $\Omega$ with a $C^2$ boundary.  Under mild assumptions on $F$, the Dirichlet problem
\begin{align*}
  \left \{ \begin{array}{rll}	
    F(D^2U) & = 0 & \textnormal{ in } \Omega\\
    U & = u & \textnormal{ on } \partial \Omega
  \end{array}  \right.
\end{align*}
has a unique viscosity solution $U \in C^{1,\alpha}(\bar \Omega)$, whenever $u \in C^{2}(\partial \Omega)$ (for some $\alpha>0$ independent of $u$). This defines a map
\begin{align*}
  I:C^2(\partial \Omega) \to C(\partial \Omega)
\end{align*}
obtained by setting $I(u,x):= (\nabla U(x),n(x))$ (i.e. $\partial_n U$), where $n$ is the inner normal to $\partial \Omega$ at $x$. Using the comparison principle for $F$, it is straightforward to see that this map $I$ has the global comparison property, and boundary regularity theory for $U$ shows that the mapping is indeed Lipschitz.  In particular, our main result applies to the Dirichlet to Neumann map, even for nonlinear equations. In a forthcoming paper, the min-max formula and boundary estimates for elliptic equations are used to analyze these operators in detail.

\textbf{Example (Isaacs-Bellman equations)} Given linear operators $\{L^{ab}\}_{ab}$ each satisfying the global comparison property, one may consider equations of the form
\begin{align*}
  I(u,x)  = 0,\;\;\textnormal{where } I(u,x) := \min\limits_{a}\max\limits_{b} L^{ab}(u,x).	
\end{align*}	
These are known as Isaacs-Bellman equations, and they arise in stochastic control (e.g Bellman, \cite{Bell-1957} for first order equations), or zero sum games (e.g. Isaacs \cite{Isaacs-1965Book} or Elliott-Kalton \cite{ElliottKalton-1972Book}).  The original references dealt mainly with first order equations, but second order examples quickly followed; see e.g. \cite{FlemingRishel--1975Book}. It is easy to see that such an operator must satisfy the global comparison property, as it is preserved from $L^{ab}$ through the min-max. Our main result can be seen as the converse assertion: we show that every Lipschitz operator for which the global comparison property holds corresponds to an Isaacs-Bellman equation for an appropriate family of Markov processes.

\subsection{Statement of The Main Results}\label{subsec: Main Results}

\begin{DEF}\label{def: touching from below}
  Given a set $X$ and functions $u,v:X\to\mathbb{R}$, it is said that $u$ touches $v$ from below at $x_0 \in X$ if
  \begin{align*}
    u(x) & \leq  v(x),\;\;\forall\;x\in X,\\
    u(x_0) & = v(x_0). 		
  \end{align*}	
  If the inequality is reversed, it is said that $u$ touches $v$ from above at $x_0$.	 
\end{DEF}

\begin{DEF}\label{def: operators with maximum principle}
  Consider a set $X$ and let $\mathcal{F}\subset \mathbb{R}^X $ be a class of real valued functions defined over $X$. Given a (possibly nonlinear) operator
  \begin{align*}
    I: \mathcal{F} \subset \mathbb{R}^X \to \mathbb{R}^X, 	  
  \end{align*}	  	
  $I$ is said to satisfy the \textbf{global comparison property (GCP)} if whenever $u \in \mathcal{F}$ touches $v \in \mathcal{F}$ from below at $x_0$ we have the inequality
  \begin{equation*}
    I(u,x_0)\leq I(v,x_0).	
  \end{equation*} 
\end{DEF}

\begin{rem}\label{rem: GCP is convex and closed} It is clear that the set of maps having the global comparison property is convex and closed with respect to $(u,x)$-pointwise limits, i.e. for limits $I_n\to I$ in the sense that
	\begin{align*}
		\lim_{n\to 0} I_n(u,x) = I(u,x)\ \forall\ u\in\mathcal{F},\ \text{and}\ \forall x\in X.
	\end{align*} 
	
\end{rem}

Our goal is to prove a representation theorem for nonlinear operators with the GCP.  In order to include examples such as the nonlinear Dirichlet-to-Neumann mapping mentioned above, the main result necessarily deals the case that $X$ is a Riemannian manifold  (in that example, $X=\partial\Om$).

\begin{DEF}\label{def:LevyTypeFunctional}
  Let $(M,g)$ be a $d$-dimensional $C^3$ Riemannian manifold with injectivity radius $r_0>0$, let $\exp_x$ denote the exponential map based at $x \in M$, and fix some $x\in M$.  A linear functional, $L_x\in (C^\beta_b(M))^*$, is said to be a functional of L\'evy type based at $x\in M$ if $L_x(u)$ has the following form
\begin{align}\label{eqIntro:LevyTypeDef}
  L_x(u) & = \Tr(A \nabla^2u(x))+(B, \nabla u(x))_{g_x}+Cu(x)\nonumber\\
   & \;\;\;\;+ \int_{M\setminus\{x\}}u(y)-u(x)-\Indicator_{B_{r_0}(x)}(y)(\nabla u(x),\exp_{x}^{-1}(y))_{g_x} \;\mu(dy),	
\end{align}
where $A:(TM)_x\to (TM)_x$ is a linear self-adjoint map such that $A\geq 0$, $B\in (TM)_x$, $C\in\mathbb{R}$ and $\mu$ is a Borel measure in $M\setminus \{x\}$ such that
\begin{align*}
  \int_{M\setminus \{x\}}\min\{1,d(x,y)^2\} \;\mu(dy).	
\end{align*}	
\end{DEF}

\begin{rem}
  When $M$ is given by the Euclidean space $\mathbb{R}^d$, the exponential at $x$ mapping simply becomes $y\to x+y$, and so the last term in \eqref{eqIntro:LevyTypeDef} takes the more commonly seen form of
  \begin{align*}
    \int_{\mathbb{R}^d\setminus \{x\} } u(y)-u(x)- \left ( \nabla u(x), y-x\right )\Indicator_{B_{1}(x)}(y) \;\mu(dy),
  \end{align*}	
  where $\mu$ is a Borel measure in $\mathbb{R}^d\setminus \{x\}$ such that
  \begin{align*}
    \int_{\mathbb{R}^d\setminus\{0\}} \min\{1,|x-y|^2\}\;\mu(dy).	  
  \end{align*}	  
\end{rem}
We are now ready to state our main results.
\begin{thm}\label{thm: Min-Max Riemannian Manifold}
  Let $(M,g)$ be as in Definition \ref{def:LevyTypeFunctional}, and let $\beta \in [0,2]$.
  Let $C^\beta_b(M)$ be one of the Banach spaces $C^{0,\beta}_b(M)$  or $C^{0,1}(M)$ if $\beta\in(0,1)$; $C^1_b(M)$ if $\beta=1$;  $C^{1,\beta-1}_b(M)$ if $\beta\in(1,2)$; $C^{1,1}(M)$ or $C^2_b(M)$ if $\beta=2$.  Let $I$
  \begin{align*}
    I:C^\beta_b(M)\to C_b(M)	  
  \end{align*}	  
  be a Lipschitz map having the global comparison property, and that satisfies the additional assumption that there is a modulus, $\om$ with $\om(r)\to 0$ as $r\to\infty$, such that for all $r$ large enough,
  \begin{align}\label{eqIntro:AssumptionQuatifiedLocalUnifConv1}
	  \forall u,v\in C^\beta_b,\ \ \norm{I(u)-I(v)}_{L^\infty(B_r)}
	  \leq C\norm{u-v}_{C^\beta(\overline{B_{2r}})} + C\om(r)\norm{u-v}_{L^\infty(M)}.
  \end{align}
 Then, $I$ has the following min-max representation (proved in Section \ref{sec: Min-Max formula for M,g})
  \begin{align}\label{eqIntro:MinMax!}
    I(u,x) = \min_{v\in C^\beta_b}\max_{L_x\in \K_{Levy}(I)}\{ I(v,x)+L_x(u-v)\},
  \end{align}
  where $\K_{Levy}(I)$ is a collection of L\'evy type linear functionals on $C^\beta_b(M)$, as in Definition \ref{def:LevyTypeFunctional} and \eqref{eqIntro:LevyTypeDef}. Moreover, the norm of each $L_x$ is bounded by the Lipschitz norm of $I$.  The formula \eqref{eqIntro:MinMax!} holds for $u$ in different spaces, depending upon the domain of $I$.  The cases are for respectively the domain of $I$ and the type of $u$ for which \eqref{eqIntro:MinMax!} holds are: domain is $C^2_b$, $u\in C^2_b$; domain is $C^{1,1}$, $u\in C^2_b$; domain is $C^{1,\gam}_b$, $u\in C^{1,\gam+\ep}_b$ for any $0<\ep<1-\gam$; domain is $C^1_b$, $u\in C^{1,\ep}_b$ for any $0<\ep<1$; domain is $C^\gam_b$, $u\in C^{\gam+\ep}$ for any $0<\ep<1-\gam$.
\end{thm}	

\begin{prop}\label{prop:CasesOnBetaForMinMax}
	In the min-max formula of \eqref{eqIntro:MinMax!}, not only do the functionals $L_x$ have the L\'evy-type form of \eqref{eqIntro:LevyTypeDef}, but they also reduce to simpler cases on $\beta$ as follows:
	\begin{enumerate}
		\item if $\beta=2$ or $C^\beta=C^{1,1}(M)$, then all terms in \eqref{eqIntro:LevyTypeDef} may be present;
		\item if $\beta\in [0,2)$, excluding the case $C^{1,1}(M)$, but including the cases of $C^1$ and $C^{0,1}$, then $A^{ab}\equiv 0$ for all $x\in M$;
		\item if $\beta\in [0,1)$ excluding the case $C^{0,1}$, then both $A^{ab}\equiv 0$ and $B^{ab}\equiv 0$ for all $x\in M$. 
	\end{enumerate}
\end{prop}

A stronger version of the min-max holds if one imposes a further assumption on $I$, 
\begin{align}\label{eqIntro:Equicontinuity Assumption}
  \forall\; \mathcal{K}\subset\subset C^\beta_b(M),\;\;\textnormal{the family }  \left \{ x\to \frac{I(v+u,x)-I(v,x)}{\|u\|_{C^\beta_b(M)}} \right \}_{u,v\in \mathcal{K}}\;\textnormal{ is equicontinuous} 
\end{align}
This assumption is satisfied if one assumes that $I$ is a Lipschitz map from  $C^{\beta}_b$ to the H\"older space $C^\alpha_b$  (for any $\alpha>0$), or even a space $C^{\omega}_b$, where $\omega$ is some modulus of continuity. 

\begin{thm}\label{thm: Min-Max Riemannian Manifold With Operators}
  Suppose that, in addition to the assumptions of Theorem \ref{thm: Min-Max Riemannian Manifold}, the operator $I$ satisfies \eqref{eqIntro:Equicontinuity Assumption}. Then, there is a family $\mathcal{L}$ of linear operators from $C^\beta_b$ to $C_b$, such that
  \begin{align}
    I(u,x) = \min_{v\in C^\beta_b}\max_{L\in \mathcal{L}}\{ I(v,x)+L(u-v,x)\},
  \end{align}
  Furthermore, for each $x\in M$, the functional defined by $L(\cdot,x)$ belongs to the same class of functionals $\K_{Levy}(I)$ above.
\end{thm}

\begin{rem}
	In the case that $I$ is linear, Theorem \ref{thm: Min-Max Riemannian Manifold} was shown by Courr\'ege, for $M = \mathbb{R}^d$ {\cite[Theorem 1.5]{Courrege-1965formePrincipeMaximum}}, and by Bony-Courr\`ege-Priouret for an arbitrary $d$-dimensional manifold \cite{BonyCourregePriouret-1966FellerSemiGroupOnDifferentialVariety}.  In fact, those works showed the result holds simply when $L$ is a continuous linear operator from $C^2$ to $C$, endowed with the non-Banach space topology of local uniform convergence on compact sets.
\end{rem}

\begin{rem}
  If the operator $I$ in Theorem \ref{thm: Min-Max Riemannian Manifold} is \emph{convex}, then the min-max formula simplifies to a max formula, see Lemma \ref{lem: max formula convex operator}, and if $I$ is linear, then there is no min-max.
\end{rem}

\begin{rem}
	As suggested by the result of Theorem \ref{thm: Min-Max Riemannian Manifold}, the GCP imposes significant structure on $I$.  A good example of this, is that in fact $I$ must depend on the $C^\beta_b$-norm in a very particular way.  For example, one possible estimate that can be shown (not exactly the one we use, but illustrative enough) is for a fixed $x$,
	\begin{align*}
		\abs{I(u,x)-I(v,x)}\leq C(R)\norm{I}_{Lip(C^\beta_b,C_b)}\left( 
		 \norm{u-v}_{C^\beta(B_R(x))} + \norm{u-v}_{L^\infty(M)}
			 \right).
	\end{align*} 
	A similar type of splitting of the estimate on the right hand side between $C^\beta_b$ and $L^\infty$ turns out to be fundamental to our method, and we explain it in detail in Section \ref{sec:NicePropertiesIInPin}.  We note for the reader familiar with the integro-differential theory that if $I$ were already known to be of the L\'evy form \eqref{eqIntro:LevyTypeDef}, then this decomposition is immediate for $\beta=2$ (also for operators that are a min-max of \eqref{eqIntro:LevyTypeDef} with uniform bounds on the ingredients).
\end{rem}


\subsection{Background}\label{sec:Background} 

There are several precedents for this result. It was shown by Courr\`ege \cite{Courrege-1965formePrincipeMaximum} that a bounded linear operator $C^2(\real^d)\to C(\real^d)$ has the global comparison property if and only if it is of \emph{L\'evy type}, in \eqref{eqIntro:LevyTypeDef}, which was later extended to linear operators on functions in a manifold $M$ in work of Bony, Courr\`ege, and Priouret \cite{BonyCourregePriouret-1966FellerSemiGroupOnDifferentialVariety}. A related result by Hsu \cite{Hsu-1986ExcursionsReflectingBM} provides a representation for the Dirichlet to Neumann map for the Laplacian in a smooth domain $\Omega$, and this corresponds to studying the boundary process for a reflected Brownian motion.  After a time rescaling, the boundary process is a pure-jump L\'evy process on the boundary, and it's generator is of the form
\begin{align*}
  L(u,x) & = b(x)\cdot \nabla_{\tau} u(x)+\int_{\partial \Omega \setminus \{x\}} \left ( u(y)-u(x) -\Indicator_{B_1(x)}(y)\nabla_\tau u(x)\cdot (y-x)\right ) k(x,y)\;d\sigma(y), 
\end{align*}
where $\nabla_\tau$ denotes the tangential gradient, $b(x)$ is a tangent vector field to $\partial \Omega$, $\sigma$ is the surface measure, and $k$ is comparable to $\abs{x-y}^{-d-1}$ for $\abs{x-y}$ small. An interesting family of nonlocal operators on Riemannian manifolds are the fractional Paneitz operators, which are also conformally invariant; recently, such operators have been studied in relation to Dirichlet to Neumann maps by Chang and Gonzalez \cite{ChGo-2011FracLaplaceGeometry} and Case and Chang \cite{CaseChang2016fractionalGJMS}; these linear operators satisfy the GCP, under certain curvature conditions. A related (nonlinear) Dirichlet to Neumann operator arising in conformal geometry is the boundary operator for the fully nonlinear Yamabe problem on manifolds with boundary \cite{LiLi2006FullyYamabeBoundary}.

 If $I$ is not necessarily linear but happens to satisfy the stronger \emph{local} comparison principle,  there are min-max results by many authors, e.g. Evans \cite{Evans-1984MinMaxRepresentations}, Souganidis \cite{Souganidis-1985MaxMinRep}, Evans-Souganidis \cite{EvansSoug-84DiffGameRepresentation} and Katsoulakis \cite{Katsou-1995RepresentationDegParEq}. In this case, the operator takes the form,
\begin{align*}
  I(u,x) = F(x,u(x),\nabla u(x),D^2u(x)),	
\end{align*}	
where $F: \mathbb{R}^d\times\mathbb{R}  \times \mathbb{R}^d\times \textnormal{Sym}(\mathbb{R}^d) \to \mathbb{R}$ can be expressed as
\begin{align*}
  F(x,u,p,M) = \min\limits_{a}\max \limits_{b} \{ \Tr(A^{ab}(x)M) + B^{ab}(x)\cdot p + C^{ab}(x)u+f^{ab}(x) \}.
\end{align*}
This was extended to even include the possibility of weak solutions acting as a \emph{local} semi-group on $BUC(\real^d)$, related to image processing, in Alvarez-Guichard-Lions-Morel \cite{AlvarezLionsGuichardMorel-1993AxiomsImageProARMA}, and to weak solutions of sets satisfying an order preserving set flow by Barles-Souganidis in \cite{BarlesSouganidis-1998NewApproachFrontsARMA}.  In \cite{AlvarezLionsGuichardMorel-1993AxiomsImageProARMA} it was shown under quite general assumptions that certain nonlinear semigroups must be represented as the unique viscosity solution to a degenerate parabolic equation.   Recent work of Gilboa and Osher \cite{GiOs-2008NLImagePro} has explored the practical advantages of image processing algorithms that are not local.
Thus, the family of nonlinear \emph{local} elliptic operators has a simple description, and hence the representation of Lipschitz operators in the local case is more or less complete.  So far, very little has been said about operators that don't necessarily have the local comparison principle, but only the weaker version that is the GCP (i.e. operators containing a nonlocal part).



\subsection{Some examples of the advantage of a min-max}\label{sec:ExamplesThatUseMinMaxLocal}

Using an equation that involves a min-max of linear operators of course goes back to studying differential games, where the equation gives information about the value and strategies of the game.  However, here we briefly list some results where the flow of information is reversed: beginning with a nonlinear PDE, some results are more easily (or only) attainable after the solutions (sub or super solutions) are represented as value functions for certain differential games-- via the dynamic programming principle.  Some very early results on existence for solutions to nonlinear first order equations utilized the properties of the value function in a stochastic differential game and the vanishing viscosity method in Fleming \cite{Fleming-1969CauchyProbNonlinFirstOrder-UsesMinMax} and Friedman \cite{Friedman-1974TheCauchyProbFirstOrderEquations-IUMJ}.  Also, solving some similar nonlinear equations, the accretive operator method of Evans \cite{Evans-1980OnSolvingPDEAccretive}  utilized a convenient min-max structure.
More refined properties of Hamilton-Jacobi equations, such as ``blow-up'' limits appear in Evans-Ishii \cite{EvansIshii-1984DiffGamesNonlinearPDE-ManMath} and inequalities for directional derivatives of solutions in Lions-Souganidis \cite{LionsSouganidis-1985DiffGamesDerivOfSol}.    Applications to the structure of level sets, geometric motions, ``generalized'' characteristics, and finite domain/cone of dependence appear in Evans-Souganidis \cite{EvansSoug-84DiffGameRepresentation}.    Some constructions of finite difference schemes in e.g. Kuo-Trudinger \cite{KuoTru-2007NewMaxPrinciplesIUMJ} utilized the fact that second order uniformly elliptic equations are necessarily a min max of linear operators in order to choose appropriate stencil sizes; and a min-max was used by Krylov \cite{Krylov-2015RatesFinDiffIsaacs} to produce a rate of convergence for some approximation schemes.   The Lions-Papanicolaou-Varadhan preprint for homogenization of Hamilton-Jacobi equations \cite{LiPaVa-88unpublished} used the fact that any semigroup with the properties inherited by the homogenized limit must be a translation invariant Hamilton-Jacobi semigroup of viscosity solutions-- a result very close in spirit to the one we show for nonlocal equations (see \cite[Section 1.2]{LiPaVa-88unpublished} and work of Lions cited therein).  Katsoulakis \cite{Katsou-1995RepresentationDegParEq} used a min-max to leverage the value function of a stochastic differential game to show existence of a viscosity solution and its Lipschitz/H\"older regularity properties.  More recently, Kohn-Serfaty exploited a min-max structure to make a link between solutions of fully nonlinear second order parabolic equations and a class of deterministic two-player games in the papers \cite{KohnSerfaty-2006DeterministicGameMCM-CPAM} and \cite{KohnSerfaty-2010DeterministicGamesFullyNonlinearPDE-CPAM} (as opposed to the already known link with \emph{stochastic} differential games).   All of these results mentioned above are solely in the context of local equations.


\subsection{Nonlocal results that assume a min-max}\label{sec:ExamplesAssumeMinMaxNonlocal}
One of the reasons why there is such a strong link between nonlinear elliptic PDE and min-max formulas associated with differential games is that it turns out the property of being a unique viscosity solution of such an equation is more or less equivalent to satisfying a dynamic programming principle/equation.  Thus, it is natural that even though in the nonlocal setting, no min-max formula for general operators was known to exist, many results assume their operators to have a min-max structure.  Some of these examples are as follows.  Some uniqueness theorems for viscosity solutions (weak solutions) to somewhat general nonlinear and nonlocal equations assume the operator to have a min-max structure in both Jakobsen-Karlsen \cite{JakobsenKarlsen-2006maxpple} and Barles-Imbert \cite{BaIm-07}. Caffarelli-Silvestre \cite[Sections 3 and 4]{CaSi-09RegularityIntegroDiff} assume a min-max structure of their equations in proving some properties of viscosity solutions-- but the main result of the paper, \cite{CaSi-09RegularityIntegroDiff} does not make a min-max assumption.  Silvestre \cite{Silv-2011DifferentiabilityCriticalHJ} assumes the min-max in proving regularity results for critical nonlocal equations, where the nonlocal term is of order 1, the same as the drift.  One of the authors in \cite{Schw-10Per} and \cite{Schw-12StochCPDE} assumes the nonlocal operators to have a min-max so as to be able to set-up a corrector equation in homogenization for some nonlocal problems.  Furthermore, in \cite{Schw-10Per} and \cite{Schw-12StochCPDE} a homogenized limit equation is proved to exist, but it is only known as an abstract nonlinear nonlocal operator of a certain ellipticity class, and its precise structure is left as an unresolved question.  Also, in connection to the known results for local Hamilton-Jacobi equations, Koike-{\'S}wi{\polhk{e}}ch \cite{KoikeSwiech-2013RepFormulaIntegroPDE-IUMJ} showed that the value function for some stochastic differential games driven by L\'evy noise is indeed the unique viscosity solution of the related nonlocal Isaacs equation. Thus, Theorem \ref{thm: Min-Max Riemannian Manifold} in our current work can be seen as a sort of a posteriori justification for the existing min-max assumptions in the nonlocal literature.



\subsection{Notation}\label{subsec: Notation}
Here we collect a table of notation that is used throughout the work.

\begin{tabular}{lll} 
\allowdisplaybreaks
Notation  & Definition \\
\hline\\
$M$ & Complete Riemannian manifold\\
$d$ & dimension of $M$\\
$d(x,y)$ & Geodesic distance on $M$\\
$TM$, $(TM)_x$ & The tangent bundle to $M$ and the tangent space at $x \in M$\\
$\exp_x$ & The exponential map of the manifold $M$\\
$r_0$ & a lower bound for the injectivity radius of $M$\\
$Q,Q',\ldots$ & cubes in some tangent space $(TM)_x$\\
$Q^*$ & cube concentric with $Q$ whose common length is increased by a factor of $9/8$\\
$\nabla^2 u(x)$ & the Hessian of $u$ over $M$\\
$\nabla_{a}$, $\nabla_{ab}$ & components of covariant derivatives on $M$ w.r.t. a chart (e.g \cite{Heb-2000NonlinearAnalysisManifoldsBook})\\
$\nabla^1_n u(x)$ & a discrete gradient over the finite set $\tilde G_n$\\
$\nabla^2_n u(x)$ & a discrete Hessian over the finite set $\tilde G_n$\\
$C_b(M)$ & functions which are continuous and bounded in $M$, with the sup-norm\\
$C^2_b(M) $ & functions for which $\nabla^2u$ is continuous and bounded in $M$, with the sup-norm\\
$C^\beta_b(M)$ & Any of: $C^2_b(M)$, $C^{1,1}(M)$, $C^{1,\beta-1}_b(M)$ if $1\leq \beta<2$, $C^{0,1}(M)$, or $C^{0,\beta}_b(M)$ if $\beta<1$\\
$C^\beta_c(M)$& functions in $C^\beta_b(M)$ that have compact support \\
$X_n^\beta(M)$ & finite dimensional subspace of $C^\beta_b(M)$ given by a Whitney extension\\
$\L(X,Y)$ & space of linear operators\\
$\textnormal{hull}(E)$ & convex hull of the set $E$\\
$l(p,x;y)$ & a ``linear'' function with gradient $p$, centered at $x$ (Def \ref{def:LinearAndQuadPolynomials})\\
$q(D,x;y)$ & a ``quadratic'' function with Hessian $D$, centered at $x$ (Def \ref{def:LinearAndQuadPolynomials})\\
$p^\beta_{u,k}$ & a ``polynomial'' approximation to $u$ using $l$ and $q$ (eq \eqref{eqWhitney:DefOfPbetaU})\\
$\rho$ & a smooth approximation to $\min(t,1)$, can be fixed for the entire work (Def \ref{def:RhoTheSmoothApproxOfMin(1,t)})\\
$\eta^\del_x$, $\tilde\eta^\del_x$ & smooth approximations to $\Indicator_{B_{r_0}(x)}$ and $\Indicator_{\{x\}}$ (Def \ref{def:EtaAndTildeEtaDef}).
\end{tabular}

\subsection{Outline of the rest of the paper} In Section \ref{sec: Min-Max finite dimension} we prove a ``finite dimensional'' version of Theorem \ref{thm: Min-Max Riemannian Manifold} for operators acting on functions defined on a finite graph. In Section \ref{sec: finite dimensional approximations} and Section \ref{sec: Min-Max formula for M,g} we use finite dimensional approximations to extend the min-max formula to the case of a Riemannian manifold, proving Theorem \ref{thm: Min-Max Riemannian Manifold}. Finally, in Section \ref{sec: Further Questions} we mention several reasonable questions that could be addressed and which are directly related to our main result.


\section{The min-max formula in the finite dimensional case}\label{sec: Min-Max finite dimension}
\setcounter{equation}{0}

A cornerstone of our proof relies on the fact that in the finite dimensional setting, min-max representations for Lipschitz functions are known.  Later we will produce a finite dimensional approximation to the original operator, $I$, and we will then invoke the tools from the finite dimensional setting.  Here we collect the necessary theorems we need, and present them in a context that is consistent with our subsequent application.

Consider a finite set $G$, let $C(G) = \mathbb{R}^G$ denote the space of real valued functions defined on $G$. In this finite dimensional setting the characterization of linear maps satisfying the global comparison property is elementary.  Thus, the importance of this section is not to establish a new result for Lipschitz maps, but rather to present all of the results in a way that will match our needs for extending the min-max to the infinite dimensional case.

\begin{lem}\label{lem: Finite Dimensional Courrege} 
  Any bounded linear map $L:C(G)\to C(G)$ can be expressed as follows
  \begin{align*}
	  Lu(x) = c(x)u(x)+\sum \limits_{y\in G,\ y\not=x} (u(y)-u(x))K(x,y)\;\;\;\forall\;x\in G,
  \end{align*}	  
  where $K(x,y):G \times G\to \mathbb{R},\;c:G\to \mathbb{R}$.  If it happens that $L$ also satisfies the GCP, then $K(x,y)\geq 0$ for all $x,y \in G$.
\end{lem}

\begin{proof}
  Consider the ``canonical basis'' of $C(G)$, $\{e_x\}_{x\in G}$, where for each $x\in G$,
  \begin{align*}
    e_x(y) = \begin{cases}
	1 & \textnormal{ if } x=y,\\
	0 & \textnormal{ otherwise}.
    \end{cases}	  
  \end{align*}	
  This means that we can write $u$ as
  \begin{align*}
	  u(x) = \sum_{y\in G} u(y)e_y(x). 
  \end{align*}
	Then, for a generic $u \in C(G)$, we can use the linearity of $L$ to write
  \begin{align*}
    (Lu)(x) & = \sum \limits_{y\in G}u(y)(Le_y)(x),\\
	        & = u(x)(Le_x)(x)+\sum \limits_{y\neq x}u(y)(Le_y)(x).
  \end{align*}
  This can be rewritten as follows,
  \begin{align*}
    (Lu)(x) & = u(x)(Le_x)(x)+u(x)\left (\sum \limits_{y\neq x} (Le_y)(x) \right )-u(x)\left (\sum \limits_{y\neq x} (Le_y)(x) \right )	+ \sum \limits_{y\neq x}u(y)(Le_y)(x),\\
	        & = u(x)\left ( (Le_x)(x) + \sum \limits_{y\neq x} (Le_y)(x)  \right )+\sum \limits_{y\neq x} (u(y)-u(x))(Le_y)(x).
  \end{align*}	  
  Let us define then
  \begin{align*}
	K(x,y) & := (Le_y)(x),\;\;\forall\;x,y\in G,\\  
    c(x) & : =  (Le_x)(x) + \sum \limits_{y\neq x} (Le_y)(x),\;\;\forall\;x \in G.
  \end{align*}

  Now, suppose we are in the special case that $L$ has the GCP.
  Observe that $e_x(y)\geq 0$ with $e_x(y)=0$ whenever $x\neq y$, with this in mind, and recalling that $L$ satisfies the global comparison property, it is clear that
  \begin{align*}	  
    K(x,y) = (Le_x)(y) & \geq 0,\;\;\forall\;y \not= x,
  \end{align*}
  and the lemma is proved.
\end{proof}

If $I$ is nonlinear but Lipschitz, the above characterization can be extended as a min-max formula.  We will use some machinery from nonsmooth analysis (see Clarke's book \cite{Cla1990optimization}). In particular, we will be making extensive use of the generalized Jacobian and some of its properties.  Note we give it a slightly different name than the one used in \cite{Cla1990optimization}.

\begin{DEF}[\cite{Cla1990optimization} Def 2.6.1]\label{def:ClarkeDifferential}
	For $I: C(G)\to C(G)$, the \textbf{Clarke differential of $I$ at $u$}  is defined as the set 
	\begin{align*}
		\mathcal{D}I(u) = \hull\left\{\lim_{k\to\infty} DI(u_k)\ :\textnormal{ where } u_k\to u\ \text{and}\ DI(u_k)\ \text{exists for each } k \right\}.
	\end{align*}
	Here ``$\lim$'' is simply interpreted as the limit of a sequence of matrices (since this takes place in a finite dimensional vector space), and $DI(u_n)$ is the (Fr\`echet) derivative of $I$ at $u_n$.
	Given a set $E$ in a normed vector space, ``$\hull(E)$'' denotes the smallest closed convex containing it.
\end{DEF}

It will also be convenient to have notation for the collection of all differentials:
\begin{DEF}[Full differential of $I$]\label{def:FullDifferential}
	For $I:C(G)\to C(G)$, the \textbf{full differential of $I$} is the set
	\begin{align*}
		\D I = \hull\left(\Union_{u\in C(G)} \D I(u) \right).
	\end{align*}
\end{DEF}

The main result of this section is the observation (which is more or less well known) that Lipschitz maps have a min-max structure.  We record it here in a format that is useful to our subsequent approximations to $I$.
\begin{lem}\label{lem:FiniteDimensionalMinMax}
  Let $I:C(G)\to C(G)$ be a Lipschitz map. Then, for any $u \in C(G)$ and $x\in G$,
  
  \begin{align}\label{eqFinDim:FinDimMinMaxUsingDI}
	  I(u,x) = \min_{v\in C(G)}\max_{L\in\D I}\left\{ I(v,x) + L(u-v,x)   \right\},
  \end{align}
  where $\D I$ is as in Definition \ref{def:FullDifferential}.  This can equivalently be written as
  \begin{align}\label{eqFinDim:FinDimMinMaxUsingSummation}
    I(u,x) & = \min \limits_{a}\max_{b}	\left \{f^{a}(x)+ u(x)c^{ab}(x)+\sum\limits_{y\in G,\ y\not=x} (u(y)-u(x))K^{ab}(x,y) \right \}.
  \end{align}
 	  If $I$ happens to have the GCP, then it also holds that $K^{ab}(x,y)\geq0$.

\end{lem}

We first list some key properties of $I$ before we prove Lemma \ref{lem:FiniteDimensionalMinMax}.

\begin{prop}\label{prop: global comparison property is inherited by the Clarke Jacobian}
  The GCP is inherited under differentiation. Namely, if $I:C(G)\to C(G)$ is a Lipschitz mapping that has the GCP, then the same is true of any $L:C(G)\to C(G)$ in $\D I$. 
 
\end{prop}

\begin{proof}  
  Assume first that $I$ is differentiable at $u$ and let $L_u$ denote the derivative of $I$ at $u$. Then,
  \begin{align*}
    \frac{d}{dt}|_{t=0}\left ( I(u+t\phi,x)-I(u,x)\right ) = L_u(\phi,x),\;\;\forall\;\phi\in C(G),x\in G.	  
  \end{align*}	   
  If $\phi(x)\leq 0$ for all $x$ and $\phi(x_0)=0$ for some $x_0$, it follows that (for every $t>0$) $u+t\phi$ touches $u$ from below at $x_0$, therefore (since $I$ has the GCP)
  \begin{align*}
     I(u+t\phi,x_0) & \leq I(u,x_0),\;\;\forall\;t>0\\
	 \Rightarrow L_u(\phi,x_0) & 	\leq 0.	  
  \end{align*}	  
  It follows $L_u$ has the GCP. By definition, any $L \in \mathcal{D}I$ is a convex combination of limits of such $L_u$. Then, by Remark \ref{rem: GCP is convex and closed} we conclude that any $L \in \mathcal{D}I$ also has the GCP, and the proposition is proved.  
  
\end{proof}

The following result is a very useful fact of the Clarke differential, and it shows that the differential set enjoys the mean value property.

\begin{prop}\label{prop: mean value theorem for Clarke's subdifferential}
  Let $I:C(G) \to C(G)$ be a Lipschitz function. Then, for any $u,v\in C(G)$ there exists some $L \in \mathcal{D}F$ such that
  \begin{align*}
    I(u)-I(v) = L(u-v).	  
  \end{align*}	  
\end{prop}

\begin{proof}
See \cite[Chapter 2, Proposition 2.6.5]{Cla1990optimization} for the proof.

\end{proof}

With these previous results in hand, we can now prove the main Lemma of this section.

\begin{proof}[Proof of Lemma \ref{lem:FiniteDimensionalMinMax}]
  For any $v\in C(G)$, define an operator $K_v:C(G)\to C(G)$ as follows
  \begin{align*}
    K_v(u,x) = \max \limits_{L \in \mathcal{D}I } \left \{ I(v,x)+L(u-v,x) \right \}.  
  \end{align*}	  
  First, let us show that
  \begin{align}
    I(u,x) = \min\limits_{v \in C(G)} K_v(u,x).\label{eqNonlinearCourrege: I is equal to min of cones}	  
  \end{align}	  
  Since $K_u(u,x)=I(u,x)$ for every $u$ and $x$ it holds that $ I(u,x) \geq \min\limits_{v \in C(G)} K_v(u,x)$.
  
  Next, by Proposition \ref{prop: mean value theorem for Clarke's subdifferential} it follows that for any $u,v \in C(G)$ and any $x \in G$ there exists some $L\in \mathcal{D}I$ such that
  \begin{align*}
    I(u,x) = I(v,x) + L(u-v,x).
  \end{align*}
  In particular,
  \begin{align*}
    K_v(u,x)= \max\limits_{L \in \mathcal{D}I} \{ I(v,x)+L(u-v,x)\} \geq I(u,x),	  
  \end{align*}	  
  which proves \eqref{eqNonlinearCourrege: I is equal to min of cones} and hence \eqref{eqFinDim:FinDimMinMaxUsingDI}. We note that \eqref{eqFinDim:FinDimMinMaxUsingSummation} follows by applying Lemma \ref{lem: Finite Dimensional Courrege} to each of the operators $L\in\mathcal{D}I$. 
  
\end{proof}


\section{A Whitney Extension For $C^\beta_b(M)$}\label{sec: finite dimensional approximations}

In this section, we develop some tools necessary to build finite dimensional approximations to $I$.  This will involve taking a sequence of finite sets $G_n \subset M$ ``converging'' to $M$, all while constructing an embedding map $C(G_n) \mapsto C^\beta(M)$ to approximate $C^\beta(M)$ by a finite dimensional subspace. Because we are concerned with approximations that will not corrupt too badly the Lipschitz norm of $I$, we had a natural choice to use the Whitney extension.  If we were working in $M=\real^d$, then all of the results we would need are standard, and can be found e.g. in Stein's book \cite[Chapter 6]{Stei-71}.  Unfortunately, we could find no references for these theorems for the Whitney extension on $M\not=\real^d$, and so for completeness, we provide the details here.  We emphasize that nearly all of the theorems and proofs in the section are adaptations that mirror those of Stein's book \cite{Stei-71}, but are modified for the additional technical difficulties arising due to the Riemannian nature of $M$. A key fact is how the extension operator preserves regularity (Theorem \ref{thm:EnTnContOperatorInCbetaNorm}). Along the way, we will also prove a few important lemmas: one regarding the behavior of the extension operators as $n\to\infty$ (Lemma \ref{lem:EnTnEstToUInC3} ), and a  ``corrector lemma'' that says the extensions are in general order preserving up to a small error (Lemma \ref{lem: extension operators preserve order up to small error} ).

For all of this section, $(M,g)$ is a $d$-dimensional complete Riemannian manifold with injectivity radius bounded below by a constant $r_0>0$.  We remind the reader that the choice to work on $(M,g)$ rather than $\real^d$ is not just for mathematical generality-- rather, since we intend to apply the min-max theory to the Dirichlet-to-Neumann operators of fully nonlinear equations, we must understand those operators acting on functions on $M=\partial\Om$.

\subsection{Finite approximations to $M$, coverings, and partitions of unity.} The following basic lemma will be needed.  It simply states that on a (uniform) small neighborhood of $0 \in (TM)_x$, the map $\exp_x$ is nearly an isometry.

\begin{lem}\label{lem: injectivity radius}
  Let $M$ be a complete $d$-dimensional manifold with injectivity radius $r_0>0$ and bounded curvature. Then for any $\varepsilon \in (0,1)$ there exists a $\delta>0$ such that for any $w\in M$ we have
  \begin{align*}	
    (1+\varepsilon)^{-1}| \exp_{w}^{-1}(x)-\exp_{w}^{-1}(y)|_{g_{w}} \leq d(x,y) \leq (1+\varepsilon)| \exp_{w}^{-1}(x)-\exp_{w}^{-1}(y)|_{g_{w}}
  \end{align*}	   
  for every $x,y \in B_{4\delta\sqrt{d}}(w)$.   
\end{lem}

\begin{rem}
	We note that the operation $\exp_w^{-1}(x)-\exp_w^{-1}(y)$ reduces simply to $x-y$ when $M$ happens to be Euclidean space. The same can be said of $\exp_{y}^{-1}(x)$ --which will also appear later in a expression that involves $x-y$ in the case $M$ is flat.
\end{rem}

\begin{proof}[Proof of Lemma \ref{lem: injectivity radius}]
	This is just, for example, the result in Lee's book \cite[Prop 5.11]{Lee-1997RiemannianManifoldsBook} restated in our setting.  We leave the proof to \cite{Lee-1997RiemannianManifoldsBook}.
\end{proof}

  \begin{center}
    \includegraphics[scale=0.15]{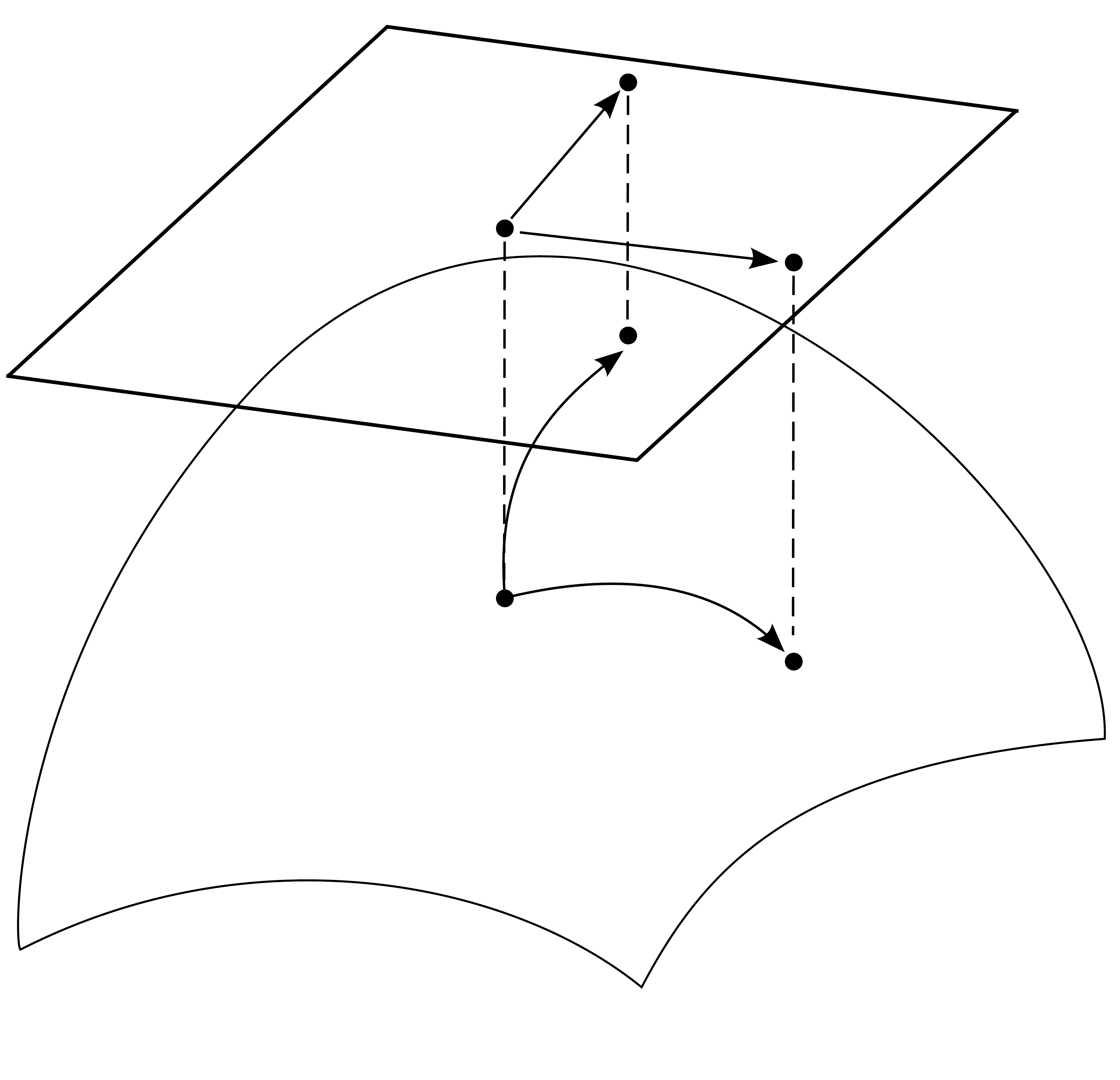}
  \captionof{figure}{The Exponential Map}\label{fig:ExpMap}   
  \end{center}
  
The above lemma says that we can control the amount by which the exponential map fails to be an isometry from $(TM)_w$ to $M$ by restricting to a small enough neighborhood of the origin in $(TM)_w$. We fill fix a ``distortion'' factor $\varepsilon$, and cover $M$ with sufficiently small balls where the above holds. We record this observation as a remark.

\begin{rem}\label{rem: good balls cover M} Choose $\delta \in (0,1)$ sufficiently small so that conclusion of Lemma \ref{lem: injectivity radius} holds with $\varepsilon=1/100$.  We fix an auxiliary sequence of points $\{w_i\}_i$ having the property 
  \begin{align}\label{eqMMonM:CoveringCenterPpoints}
    M = \bigcup\limits_{i} B_{\delta}(w_i).
  \end{align}
  Moreover, we select these points making sure the covering has the following property: there is a number $N_0>0$ such that any $x\in M$ lies in at most $N_0$ of the balls $\{B_{4\delta\sqrt{d}}(w_i)\}_i$.
\end{rem}

  From here on, we shall fix an infinite sequence of finite subsets $M$ which, informally speaking, approximate the entire manifold (let us emphasize these points are different from the centers of the cover in Remark \ref{rem: good balls cover M}). 
  
  It will be useful to construct a sequence of discrete, but not necessarily finite, approximations to $M$ (which will contain the finite ones). This sequence shall be denoted $\{\tilde G_n\}_n$, and it is assumed to have the following properties:
  \begin{enumerate}
    \item The sequence is monotone increasing, $\tilde G_n \subset \tilde G_{n+1},\;\;\forall\;n\in\mathbb{N}$.	
    \item For every $n$, we have
    \begin{align}\label{eqFinDimApp: tilde hn def}
      \tilde h_n := \sup \limits_{x\in M}d(x,\tilde G_n),\;\;\sup \limits_{n}\tilde h_n\leq \delta/500 ,\;\;\lim\limits_{n} \tilde h_n = 0.
    \end{align}
    \item There exist a constant $\lambda>0$ independent of $n$, such that	
    \begin{align}\label{eqFinDimApp: lambda def}	
      \inf \limits_{\substack{x,y \in \tilde G_n \\  x\neq y}}d(x,y) \geq \lambda \tilde h_n.	
    \end{align}	
	
  \end{enumerate}
  \begin{rem}
    The existence of such a sequence of sets is not too difficulty to verify. For the sake of brevity, we only sketch its construction: take an orthogonal grid at each of the points $w_i$, and push them down via the respective exponential map, throw away points as needed. 
  \end{rem}
  \begin{rem}
    The fact that $\tilde h_n$ is much smaller than $\delta$ is used at several points in the proof. In particular, the explicit factor of $500$ in \eqref{eqFinDimApp: tilde hn def} is chosen to guarantee there are sufficiently many points of $\tilde G_n$ in any ball of radius $\delta$, a fact that is not used until the Appendix (Proposition \ref{prop:Appendix Admissible Directions}), where we prove several important facts about the discretization of the gradient and the Hessian.
  \end{rem}

  Then, the sequence of finite sets $\{G_n\}_n$ is constructed as follows: we fix an auxiliary point $x_* \in M$ and let
  \begin{align}\label{eqFinDimApp: Mn def}
    M_n := B_{2^n}(x_*),
  \end{align}
  and define
  \begin{align}\label{eqFinDimApp: Gn def}
    G_n :=  \tilde G_n \cap M_{n+1}.
  \end{align}
  
  It is not surprising that the sequence $\{G_n\}_n$ has similar properties as $\{\tilde G_n\}$. As these properties will be used successively throughout the paper, we record them all in a single proposition. 
  \begin{prop}\label{prop: Properties of G_n}
    The following properties are satisfied by $\{G_n\}_n$
    \begin{enumerate}
      \item If $M$ is compact, then $G_n=\tilde G_n$ for all large enough $n$.		
      \item For every $n$ we have $G_n\subset G_{n+1}$.
      \item Each $G_n$ is finite.
      \item We have, with $M_n$ as defined in \eqref{eqFinDimApp: Mn def}, that $h_n = \tilde h_n$, in particular
      \begin{align}\label{eqFinDimApp: hn def}
        h_n & := \sup \limits_{x\in M_n}d(x,G_n), \textnormal{ satisfies } \sup \limits_{n} h_n \leq \delta/500,\;\;\lim\limits_{n} h_n = 0.
      \end{align}
      \item Let $h_n$ be as in \eqref{eqFinDimApp: hn def} and $\lambda$ as in \eqref{eqFinDimApp: lambda def}, then for all sufficiently large $n$ we have
      \begin{align}\label{eqFinDimApp: hn regularity property}
        \inf \limits_{\substack{x,y\in G_n \\ x\neq y}} d(x,y) \geq \lambda h_n.
      \end{align} 
    \end{enumerate}		
  \end{prop}
  
  \begin{proof}
    Properties (1) and (2) are obvious.  Next, from \eqref{eqFinDimApp: lambda def} it follows in particular that $\tilde G_n$ has no accumulation points, and thus Property (3) follows from the fact that $M_n$ is bounded.
	
    By the assumptions on $\tilde G_n$, for any $x\in M_n$ there is some $\hat x \in \tilde G_n$ such that
    \begin{align*}	 
      d(x,\hat x)\leq \tilde h_n.	
    \end{align*}	
    Since $d(x,x_*)\leq 2^n$, it follows that $d(\hat x,x^*)\leq 2^n+h_n\leq 2^{n+1}$ since $\tilde h_n\leq 1$ for all $n$ by \eqref{eqFinDimApp: tilde hn def}. This means that $\hat x\in B_{2^{n+1}}(x_*) = M_{n+1}$, and that $\hat x\in G_n$. This shows that   		
    \begin{align*}
      \tilde h_n = \sup \limits_{x \in M}d(x,\tilde G_n) = \sup \limits_{x\in M_n}d(x,G_n) = h_n.
    \end{align*}			
    and Property (4) is proved.  On the other hand, we have the trivial inequality 
    \begin{align*}
      \inf \limits_{\substack{x,y\in G_n \\ x\neq y}} d(x,y)  \geq \inf \limits_{\substack{x,y\in \tilde G_n \\ x\neq y}} d(x,y),
    \end{align*}		
    then \eqref{eqFinDimApp: lambda def} says this last term is at least $\lambda \tilde h_n$, which is equal to $\lambda h_n$, which proves Property (5).
  \end{proof}

  \begin{rem} If $M=\mathbb{R}^d$, for each $n\in\mathbb{N}$, we consider the Cartesian grid
    \begin{align*}
      \tilde G_n & := ( 2^{-2-n})\mathbb{Z}^d.
    \end{align*}	  
    It is straightforward to see that $\{\tilde G_n\}_n$ has all the desired properties.
  \end{rem}
  
  \begin{rem} Although the finite sets $G_n$ will be the ones actually used in the proof of the main theorem, that will not happen until Section \ref{sec: Min-Max formula for M,g}, for the rest of this section, we will be mostly concerned with $\tilde G_n$.
  \end{rem}
  
  We now start the construction.  For each $n$ we shall construct open covers $\{P_{n,k}\}_{k\in\mathbb{N}}$ and $\{P_{n,k}^*\}_{k\in\mathbb{N}}$ of  $M\setminus \tilde G_n$, comprised of subsets of $M \setminus \tilde G_n$ (that is, the sets $P_{n,k}$ and $P_{n,k}^*$ will be disjoint from $\tilde G_n$). The sets in these covers will obtained by applying the exponential map to families of cubes lying in the tangent spaces $\{TM_{w_i}\}_i$. The cubes themselves are chosen following the classical Whitney cube decomposition, see \cite[Chp 6, Thm 1]{Stei-71}.
  
  \begin{lem}\label{lem:CubesOnM}
    For every $n$ there exists two families of open sets $\{P_{n,k}\}_k,\{P_{n,k}*\}_k$ such that
    \begin{enumerate}
      \item For every $k$, there is some $w_{i_k}$ --$\{w_i\}$ being the points fixed in Remark \ref{rem: good balls cover M}-- such that
        \begin{align*}	  
          P_{n,k} & = \exp_{w_{i_k}}( Q_{n,k}),\;\;\;P_{n,k}^*= \exp_{w_{i_k}}( Q_{n,k}^*),			  
        \end{align*}  
      where $Q_{n,k}$ is a cube in $(TM)_{w_{i_k}}$, and $Q_{n,k}^*$ its concentric cube with length increased by a factor of $\tfrac{9}{8}$.
      \item For every $k$, we have 	  	  
        \begin{align*}
          \tfrac{1}{5}d(P_{n,k},\tilde G_n)\leq \textnormal{diam}(P_{n,k}) \leq 5d(P_{n,k},\tilde G_n),\\
          \tfrac{1}{5}d(P_{n,k}^*,\tilde G_n)\leq \textnormal{diam}(P_{n,k}^*) \leq 7d(P_{n,k}^*,\tilde G_n).		  
        \end{align*}
      \item There is a universal $N>0$, which in particular, is independent of $n$, such that if
	  \begin{align}
            K_x:= \{k \mid x\in P_{n,k}^*\},\;\;x\in M\setminus \tilde G_n,\label{eqFinDim:Set Of Covering Cubes}
	  \end{align}
	then
        \begin{align}
          \# \{ k \mid x\in P_{n,k}^*\} \leq N \;\;\forall\;x\in M\setminus \tilde G_n. \label{eqFinDim:Size Set Of Covering Cubes}
        \end{align}
      \item The sets $\{P_{n,k}\}_k$ cover the complement of $\tilde G_n$,
        \begin{align*}
          \bigcup \limits_{k} P_{n,k} = M \setminus \tilde G_n.
        \end{align*}
    \end{enumerate}	
  \end{lem}

  \begin{center}
    \includegraphics[scale=0.35]{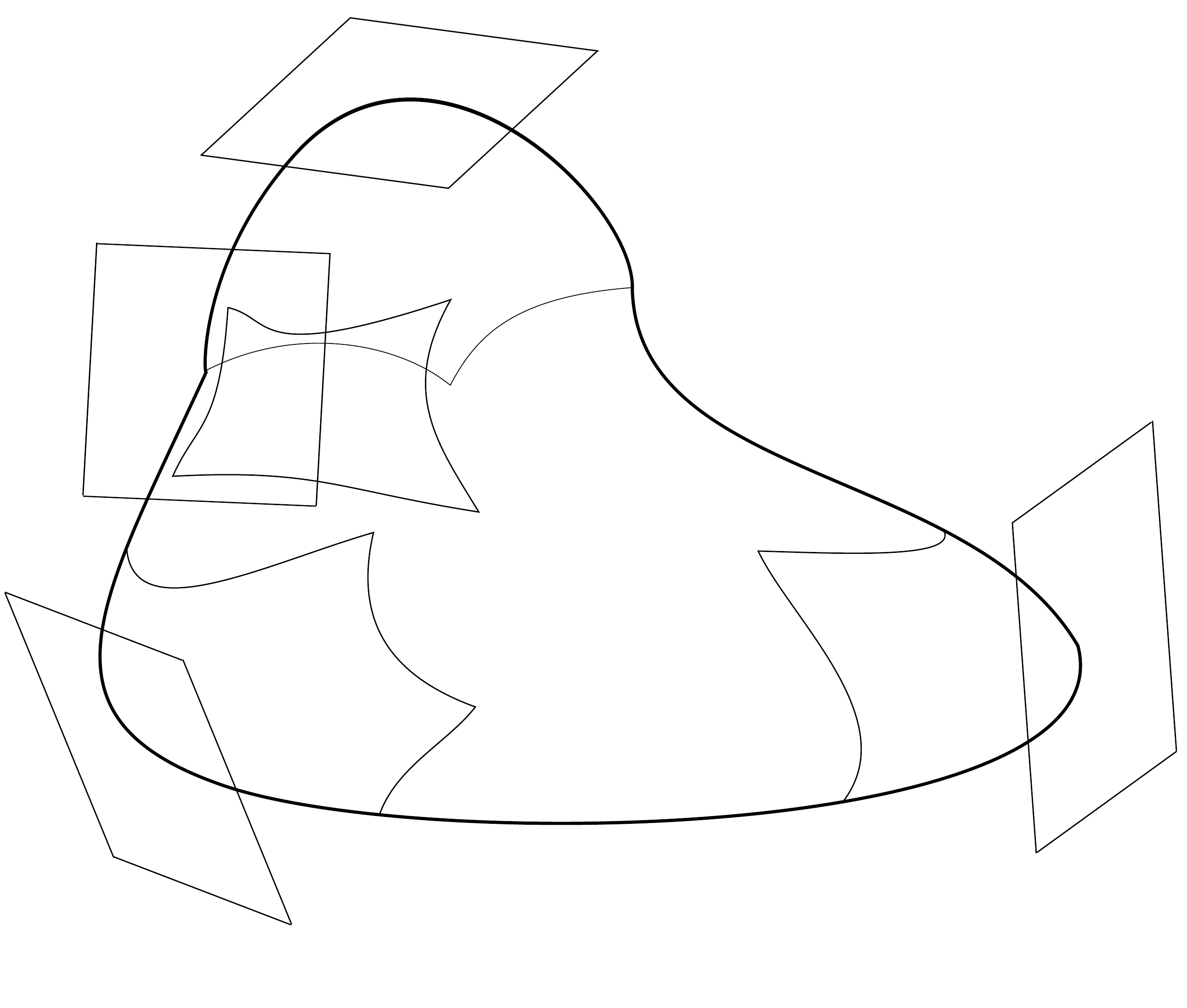}
  \captionof{figure}{Example cubes, $Q_{n,k}$, in $(TM)_x$ projected to $M$, as $P_{n,k}$}\label{fig:TangentSquares}
  \end{center}

  \begin{proof}   
    Let $\delta$ be the constant from Remark \ref{rem: good balls cover M}.  In what follows, we will lift a portion of $\tilde G_n$ to the vector space $TM_{w_i}$, for some nearby $w_i$. Then, we apply the Whitney cube decomposition to the resulting set \cite[Chp 6]{Stei-71}, producing cubes in $TM_{w_i}$ that will have the desired properties. These cubes are then mapped to $M$ via $exp_{w_i}$.
		
    \noindent For each $w_i$, we select 
    \begin{align*}
      [e_{i,1},\ldots,e_{i,d}],\;\textnormal{ an orthonormal basis of } (TM)_{w_i},		
    \end{align*}			
    the purpose of these bases is to allow us to set a rectangular grid in each of the tangent spaces. Which particular basis we choose each $w_i$ will be immaterial. For each $n\in \mathbb{N}$ and $w_i$, define
    \begin{align*}	   
      F_{n,i} := \exp_{w_i}^{-1}(\tilde G_n \cap B_{3\delta\sqrt{d}}(w_i)),\;\;\;\Omega_{n,i} := B_{\delta}(0)\setminus F_{n,i}.  	   
    \end{align*}
    For each $i$ we construct a family of cubes in $(TM)_{w_i}$, denoted by $\mathcal{Q}_{n,i}$. The family is obtained by applying Whitney's cube decomposition in $(TM)_{w_i}$. Ultimately, this cube decomposition will be pushed down to $M \setminus \tilde G_n$ via the exponential map at $w_i$ (see Figure \ref{fig:StackOfCubeDecomposition}).

    Let us go over the cube decomposition. As we are working on a manifold, it will be convenient to consider cubes inside a small enough cube centered at the origin of $(TM)_{w_i}$. Keeping this in mind --and recalling that $\delta$ was chosen in Remark \ref{rem: good balls cover M}-- we let $m_0 \in \mathbb{N}$ be the universal constant determined by
    \begin{align*}
      2\delta \leq 2^{-m_0} < 4\delta.	
    \end{align*}	
    In other words, $m_0$ is the largest number such that $B_\delta(0)\subset (TM)_{w_{i}}$ is contained inside the cube centered at $0$ with common side length equal to $2^{-m_0}$, that is
    \begin{align*}
      Q_{2^{-m_0-1}}(0) = \{ q \in (TM)_{w_i} : |(q,e_{i,l})_{g_x}| \leq 2^{-m_0-1},\;l=1,\ldots,d\}. 		
    \end{align*}			
    Then, considering only those cubes obtained by repeatedly bisecting the sides of $Q_{2^{-m_0-1}}(0)$, we define $\hat {\mathcal{Q}}_{n,i}$ to be the subfamily formed by those cubes $Q$ for which we also have
    \begin{align*}		
      Q \cap \{  q \in (TM)_{w_i} \mid 2\diam(Q) \leq d(q,F_{n,i}) \leq 4\diam(Q) \} =\emptyset.
    \end{align*}	
    Then, let us say that a cube $Q$ in $\hat{\mathcal{Q}}_{n,i}$ is maximal if there is no other cube $Q'$ in the family such that $Q'\subset Q$.  The family $\mathcal{Q}_{n,i}$ is then defined to be the subfamily of maximal cubes of $\hat {\mathcal{Q}}_{n,i}$.
	
    \begin{center}
      \includegraphics[scale=0.75]{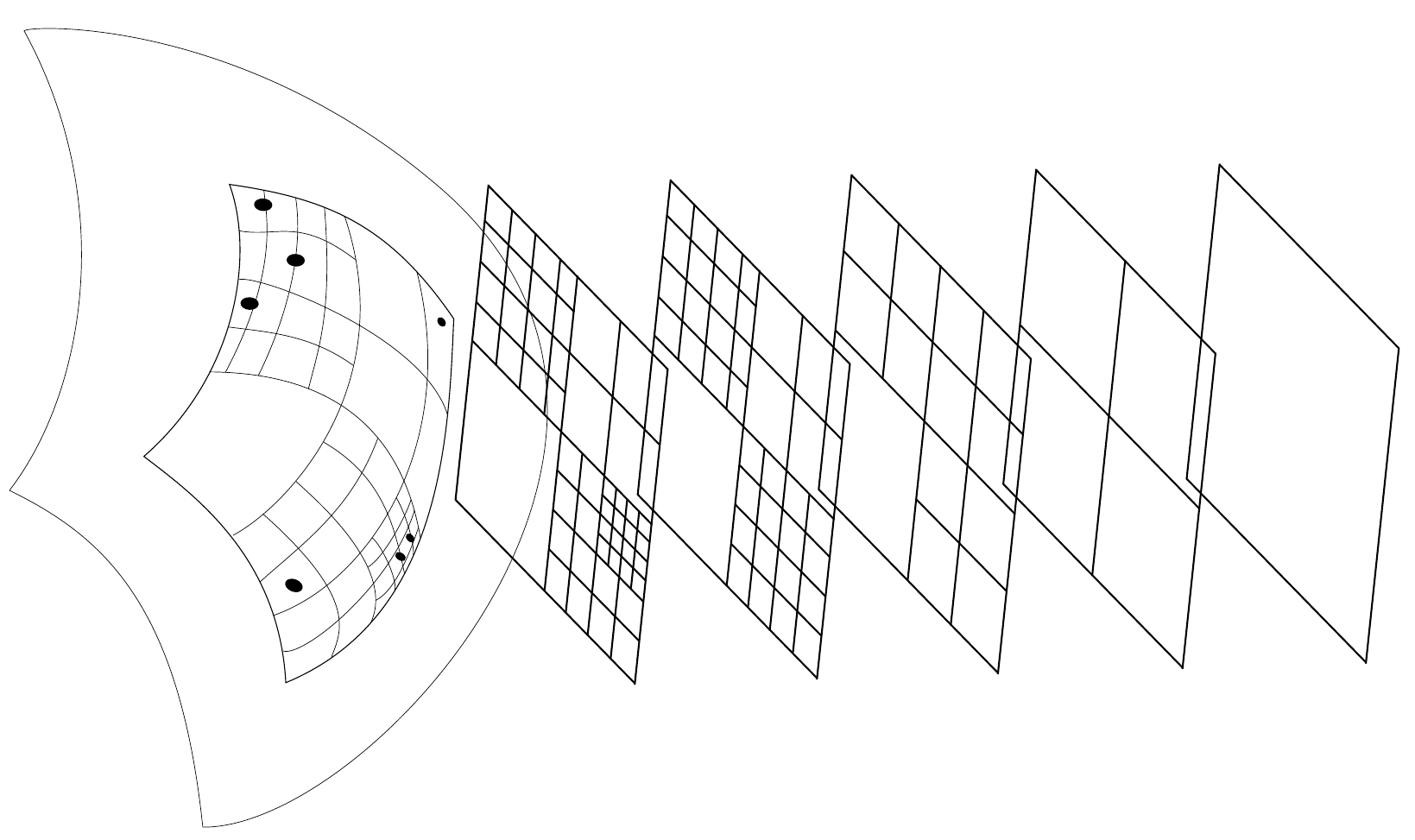}
    \captionof{figure}{Decomposing Cubes In $(TM)_x$}\label{fig:StackOfCubeDecomposition}  
    \end{center}
		
    The family $\mathcal{Q}_{n,i}$ has the following properties
    \begin{enumerate}
      \item Any two distinct elements of $\mathcal{Q}_{n,i}$ have disjoint interiors.
      \item Every $q\in \Omega_{n,i}$ lies in the interior of a cube belonging to $\mathcal{Q}_{n,i}$.
      \item If $Q \in \mathcal{Q}_{n,i}$, then the common side length of $Q$ is no larger than $2^{-m_0}\leq 4\delta$. In particular, $Q$ lies inside $B_{2\delta \sqrt{d}}$, and $Q^*$ lies inside $B_{3\delta \sqrt{d}}(0)$.
      \item There is a number $N_1$, independent of $n$ and $i$, such that any $q \in \Omega_{n,i}$ lies in at most $N_1$ of the sets $\{Q^*\}_{Q\in \mathcal{Q}_{n,i}}$.
      \item The cubes in the family have a diameter comparable to their distance to $F_{n,i}$. Concretely, 
      \begin{align}
        \diam(Q) \leq d(Q,F_{n,i}) \leq 4\diam(Q), \;\;\forall\;Q\in\mathcal{Q}_{n,i}.\label{eqFinDim:tangent space cubes diameter vs distance to Fni}
      \end{align}		  	  	  	   
    \end{enumerate}	
    We omit the verification of these properties, as it is standard. We refer the interested reader to \cite[Chap. 6, Sec. 1]{Stei-71} for details. 
	
      Let us immediately note that bounds akin to \eqref{eqFinDim:tangent space cubes diameter vs distance to Fni} extend to the respective ``stretched'' cubes $Q^*$. Indeed, fix some $Q \in \mathcal{Q}_{n,i}$. From $Q\subset Q^*$ we have $d(Q^*,F_{n,i}) \leq d(Q,F_{n,i})$, while from $d(Q,(Q^*)^c) = (1/8)\diam(Q)$ we have $d(Q^*,F_{n,i})\geq d(Q,F_{n,i})-(1/8)\diam(Q)$. From these observations and \eqref{eqFinDim:tangent space cubes diameter vs distance to Fni} it follows that
      \begin{align}
        \tfrac{7}{9}\diam(Q^*)= \tfrac{7}{8}\diam(Q)\leq d(Q^*,F_{n,i}) \leq 4\diam(Q^*), \;\;\forall\;Q\in\mathcal{Q}_{n,i}.\label{eqFinDim:tangent space larger cubes diameter vs distance to Fni}						
      \end{align}		  	  	  	   
    Having the families $\mathcal{Q}_{n,i}$ (for each $i$ for which $F_{n,i}\not=\emptyset$), let us combine them into a single one, which will also be countable. Let $\{Q_{n,k}\}_k$ denote an enumeration of the elements of this larger family. Each $Q_{n,k}$ is a cube belonging to some tangent space $(TM)_{w_{i_k}}$ for some $w_{i_k}$. 
		
    Let $q_{n,k}$ denote the the center of $Q_{n,k}$, and $l_{n,k}$ its common side length. Then, we define
    \begin{align*}		
      P_{n,k} := \exp_{w_i}(Q_{n,k}),\;\;P_{n,k}^*:=\exp_{w_i}(Q_{n,k}^*),\;\;y_{n,k} =\exp_{w_{i_k}}(q_{n,k}),
    \end{align*}	
    This produces a family of sets for which Property (1) holds. Let us verify these families satisfy the other three Properties. Let us prove Property (2). Fix $P_{n,k} = \exp_{w_{i_k}}(Q_{n,k})$. Then,
    \begin{align*}	
      d(P_{n,k},\tilde G_n) & \leq d(P_{n,k},\tilde G_n \cap B_{3\delta \sqrt{d}}(w_{i_k}))\\
	    & \leq \tfrac{101}{100}d(Q_{n,k},F_{n,i})\\
	    & \leq 4\tfrac{101}{100}\diam(Q_{n,k}) \leq 4(\tfrac{101}{100})^2\diam(P_{n,k}) \leq 5\diam(P_{n,k}).
    \end{align*}	
    The exact same argument yields
    \begin{align*}	
      d(P_{n,k}^*,\tilde G_n) & \leq 5\diam(P_{n,k}^*)
    \end{align*}	
    This yields one side of the bounds in Property (2). Next, note that  $Q_{n,k} \subset B_{2\delta \sqrt{d}}(0)$, which means that $\diam(Q_{n,k}) \leq 4\delta\sqrt{d}$ and
    \begin{align*}
      d(P_{n,k},\tilde G_n \setminus B_{3\delta \sqrt{d}}(w_{i_k})) & \geq \tfrac{100}{101}d(Q_{n,k},\partial B_{3\delta \sqrt{d}}(0))\\
	  & \geq \tfrac{100}{101}\delta \sqrt{d}\\
	  & \geq \tfrac{1}{4}\tfrac{100}{101}\diam(Q_{n,k})\geq \tfrac{1}{4}(\tfrac{100}{101})^2\diam(P_{n,k}) \geq \tfrac{1}{5}\diam(P_{n,k}).	  
    \end{align*}		
    At the same time,
    \begin{align*}	
      d(P_{n,k},\tilde G_n \cap B_{3\delta \sqrt{d}}(w_{i_k})) \geq \tfrac{100}{101}d(Q_{n,k},F_{n,i_k}) & \geq \tfrac{100}{101}\diam(Q_{n,k})\\
	  & \geq (\tfrac{100}{101})^2\diam(P_{n,k})\\
	  & \geq \tfrac{1}{5}\diam(P_{n,k}).
    \end{align*}
    Therefore,
    \begin{align*}	
      d(P_{n,k},\tilde G_n) \geq \tfrac{1}{5}\diam(P_{n,k}).
    \end{align*}	
    With the same argument, one can check that
    \begin{align*}	
      d(P_{n,k}^*,\tilde G_n) \geq \tfrac{7}{36}(\tfrac{100}{101})^2\diam(P_{n,k}^*) \geq \tfrac{1}{7}\diam(P_{n,k}^*) ,
    \end{align*}			
    and Property (2) is proved.  Next, recall the sequence $\{w_i\}$ is such that given $x \in M$, then
    \begin{align*}	
      \#\{ i \mid\;x\in B_{3\delta\sqrt{d}}(w_i)\}\leq N_0. 
    \end{align*}
       It follows that each $x$ lies in at most $N_0$ of the sets $\{\Omega_{n,i}\}_i$, and Property (3) follows immediately by taking $N:=N_0N_1$. Finally, from \eqref{eqFinDim:tangent space larger cubes diameter vs distance to Fni}, we have
       \begin{align*}	
         P_{n,k}\subset P_{n,k}^* \subset M \setminus \tilde G_n\;\;\forall\;n,k.	
       \end{align*}	
       Furthermore, since the balls $\{B_{\delta}(w_i)\}_i$ cover $M$, and each $B_{\delta}(w_i)$ is covered by $\{P_{n,k}\}$, we have
       \begin{align*}	
         \bigcup \limits_{k} P_{n,k} \supset \bigcup\limits_{i} \{ B_{\delta}(w_i)\setminus \tilde G_n \}  = M \setminus \tilde G_n.			
       \end{align*}
       Thus we obtain Property (4), and the lemma is proved.
  \end{proof}
  
  From this point onward, the sets $\tilde G_n,G_n$, and the associated family of open sets $\{P_{n,k}\}_k$ and $\{P_{n,k}^*\}_k$ will be fixed. For every $k$,  by the ``center'' of $P_{n,k}$ we will mean the point $y_{n,k} = \exp_{w_{i_k}}(q_{n,k})$. Furthermore, $\hat y_{n,k}$ will denote a point in $\tilde G_n$ which realizes the distance from $y_{n,k}$ to $\tilde G_n$.  Let us record these definitions for further reference:
  \begin{align}\label{eqWhitney:DefOfYnkHat}
    y_{n,k} := \exp_{w_{i_k}}(q_{n,k}),\ \text{and}\ \hat y_{n,k}\in \tilde G_n\ \text{such that}\     d(y_{n,k},\hat y_{n,k}) = d(y_{n,k}, \tilde G_n).  
  \end{align}

  The following elementary fact will be used repeatedly in this section, we record it as a remark.
  \begin{rem}\label{rem: Whitney local diameter of balls}
    Let $x \in P_{n,k}^*$. Then we have the inequalities
    \begin{align*}	
      \tfrac{1}{7}\diam(P_{n,k}^*)\leq d(x,\tilde G_n) \leq 6\diam(P_{n,k}^*).
    \end{align*}	
    Let us prove this. By the triangle inequality $d(x,\tilde G_n)\leq d(P_{n,k}^*,\tilde G_n)+\diam(P_{n,k}^*)$. Then, (2) from Lemma \ref{lem:CubesOnM} says that
    \begin{align*}
      d(x,\tilde G_n) & \leq 5\diam(P_{n,k}^*)+\diam(P_{n,k}^*) \leq 6\diam(P_{n,k}^*).	 
    \end{align*}
    On the other hand, since $d(P_{n,k}^*,\tilde G_n)$ is just the infimum of $d(\cdot,\tilde G_n)$ over $P_{n,k}^*$,
    \begin{align*}
      d(x,\tilde G_n) & \geq d(P_{n,k}^*,\tilde G_n) \geq \tfrac{1}{7}\diam(P_{n,k}^*),
    \end{align*}
    the second inequality being again thanks to (2) from Lemma \ref{lem:CubesOnM}.	
  \end{rem}
  
  Continuing in parallel with the classical approach to the extension problem \cite[Chapter 6]{Stei-71}, we construct a partition of unity for $M \setminus \tilde G_n$ associated to the family $\{P_{n,k}\}_{n,k}$.  Since we work on a Riemannian manifold, we will need to compute covariant derivatives for scalar functions, up to third order  (since the highest regularity we will be concerned with is $C^{2,\alpha}$, this will suffice for all our purposes). For a review of the definition of $\nabla^i\phi$ and its basic properties, see the end of Section 1.1 in \cite[Chapter 1]{Heb-2000NonlinearAnalysisManifoldsBook}.
  \begin{lem}[Partition of unity]\label{lem:PartitionOfUnityGn}
    For every $n$, there is a family of smooth functions $\{\phi_{n,k}\}_k$ such that
    \begin{enumerate}
     \item $\sum \limits_{k} \phi_{n,k}(x) = 1$ for all $x \in M\setminus \tilde G_n$.
     \item $0\leq \phi_{n,k}\leq 1$ in $M\setminus \tilde G_n$ and $\phi_{n,k}\equiv 0$ outside $P_{n,k}^*$. 
      \item There is a constant $C$ such that for every $x\in M\setminus \tilde G_n$, every $n,k$ and $i=1,2,3$ we have 
      \begin{align*}
        |\nabla^i \phi_{n,k}(x)|_{g_x} \leq \frac{C}{(\diam(P_{n,k}^*))^i}.	  
      \end{align*}		  	  
    \end{enumerate}
  \end{lem}
  
  \begin{proof}
    As is standard for a construction of a partition of unity, we will begin with auxiliary functions, $\tilde \phi_{n,k}$ that are basically smooth bumps localized at the centers of the sets $P_{n,k}$, and then we normalize their sum to obtain the desired family, $\{\phi_{n,k}\}$.
    
    Let us fix an auxiliary function $\psi_0:\mathbb{R}^d\to\mathbb{R}$ with the following properties
    \begin{align*}	
      \psi_0 \in C^{\infty}(\mathbb{R}^d),\;\;\psi_0 \equiv 1 \textnormal{ in } [-1,1]^d,\;\;\psi_0 \equiv 0 \textnormal{ outside } [-\tfrac{9}{8},\tfrac{9}{8}]^d.	
    \end{align*}	
    Using the basis $[e_{i,1},\ldots,e_{i,d}]$ for each $i$, we can ``push''  the above function to smooth functions
    \begin{align*}	
      \psi_{i}:(TM)_{w_i} \to \mathbb{R}.		
    \end{align*}		
    Then, for each $k$ we let $\tilde \phi_{n,k}$ be defined by
    \begin{align*} 
      \tilde \phi_{n,k}(x) = \left \{ \begin{array}{ll}
	  \psi_{i_k} \left ( (l_{n,k}/2)^{-1}(\exp_{w_{i_k}}^{-1}(x)-q_{n,k})\right ) & \textnormal{ inside } P_{n,k}^*\\
	  0 & \textnormal{ outside } P_{n,k}^*.
	  \end{array}\right.
    \end{align*}
    Here $l_{n,k} = (\sqrt{d})^{-1}\diam(Q_{n,k})$ is the common length of the sides of $Q_{n,k}$. Since $P_{n,k}^*$ lies uniformly in a normal neighborhood, and $\psi_0 \in C^\infty$, it follows for each $k$ that $\tilde \phi_{n,k}$ is a smooth function. Moreover, using the definition of $\tilde \phi_{n,k}$ above it is straightforward to check that
    \begin{align*}		
      \tilde \phi_{n,k}(x) \equiv	1 \textnormal{ in } P_{n,k},\;\;\tilde \phi_{n,k}(x)\equiv 0 \textnormal{ outside } P_{n,k}^*.
    \end{align*}	
    Furthermore, from the chain rule it follows easily there is a universal $C$ such that for $i=1,2,3$,
    \begin{align*}	
      \sup \limits_{x\in M}|\nabla^i \tilde \phi_{n,k}(x)|g_{x} \leq \frac{C}{(l_{n,k})^i} \leq \frac{C}{(\diam(P_{n,k}^*))^i}. 
    \end{align*}
    Next, we consider the function
    \begin{align*}
      \phi_n(x) & := \sum\limits_{k}\tilde \phi_{n,k}(x).
    \end{align*}
    Note that at most $N$ of the sets $P_{n,k}^*$ contain $x$ (Lemma \ref{lem:CubesOnM}), and therefore, only at most $N$ of the functions $\tilde \phi_{n,k}$ are non-zero. Thus the sum defining $\phi_n$ is locally the sum of at most $N$ non-zero smooth functions.  In particular, we may differentiate to obtain
    \begin{align*}	
      \nabla^{i} \phi_n(x)  = \sum \limits_{k}\nabla^{i} \tilde \phi_{n,k}(x),\;\;i=1,2,3.
    \end{align*}
    Let us estimate the derivatives of $\phi_n(x)$, for each $i=1,2,3$ we have
    \begin{align*}
      |\nabla^i \phi_n(x)|_{g_x} & \leq \sum \limits_{k}|\nabla \tilde \phi_{n,k}(x)|_{g_x} \leq \sum \limits_{k\in K_x}\frac{C}{(\diam(P_{n,k}^*))^i}
    \end{align*}
    Then, by Remark \ref{rem: Whitney local diameter of balls}
    \begin{align*}	
      k\in K_x \Rightarrow x \in P_{n,k}^* \Rightarrow d(x,\tilde G_n) \leq 6\diam(P_{n,k}^*).	
    \end{align*} 
    Using again that $\#K_x\leq N$, we conclude for $i=1,2,3,$ that		
    \begin{align*}
      |\nabla^i \phi_n(x)|_{g_x} & \leq  N\frac{C}{(d(x,\tilde G_n))^i}\leq \frac{C}{(d(x,\tilde G_n))^i}.
    \end{align*}	
    On the other hand,  for every $x$ there is at least one $k$ such that $x \in P_{n,k}$, thus
    \begin{align*}
      1\leq \phi_n(x) \leq N\;\;\forall\;x \in M\setminus \tilde G_n.
    \end{align*}	
    We may now define the actual family of functions $\{\phi_{n,k}\}_{n,k}$. For each $k$, let
    \begin{align*} 
      \phi_{n,k}(x) := \frac{\tilde \phi_{n,k}(x)}{\phi_n(x)}.
    \end{align*}
    It is simple to check that this family of functions has Property (1). Indeed, using that the sum is locally finite, we have
    \begin{align*}	
      \sum \limits_{k}\phi_{k}(x)= \sum \limits_{k} \frac{\tilde \phi_{n,k}(x)}{\phi_n(x)} = \frac{1}{\phi_n(x)}\sum \limits_{k} \tilde \phi_{n,k}(x) = \frac{\phi_n(x)}{\phi_n(x)} = 1.
    \end{align*}	
    On the other hand, Property (2) follows as each $\tilde \phi_{n,k}$ is non-negative and supported in $P_{n,k}^*$ from the definition of $\tilde \phi_{n,k}$. As for Property (3), we compute
    \begin{align*}
      \nabla \phi_{n,k}(x) & = \frac{\nabla \tilde \phi_{n,k}(x)}{\phi_n(x)}-\frac{\tilde \phi_{n,k}(x)}{\phi_n(x)^{2}} \nabla \phi_n(x)\\
      \nabla^2 \phi_{n,k}(x) & = \frac{\nabla^2\tilde \phi_{n,k}(x)}{\phi_n(x)}-\frac{\tilde \phi_{n,k}(x)}{\phi_n(x)}\nabla^2\phi_n(x).
    \end{align*}
    Combining the estimates for the derivatives of $\tilde \phi_{n,k}$ and $\phi_n$ yields, for each $x\in M\setminus \tilde G_n$,
    \begin{align*}
      |\nabla \phi_{n,k}(x)|_{g_x} & \leq |\nabla \tilde \phi_{n,k}(x)|_{g_x}+|\nabla \phi_n(x)|_{g_x} \leq  \frac{C}{\diam(P_{n,k}^*)} \;,\\
      |\nabla^2\phi_{n,k}(x)|_{g_x} & \leq |\nabla^2\tilde \phi_{n,k}(x)|_{g_x}+|\nabla^2\phi_n(x)|_{g_x} \leq \frac{C}{(\diam(P_{n,k}^*))^2}\;.
    \end{align*}
    Where we have used Remark \ref{rem: Whitney local diameter of balls} (once again) to obtain the second bound in each case. The respective bound for $\nabla^3\phi_{n,k}(x)$ follows similarly, and we omit the details. 
  	
  \end{proof}
  
  \begin{rem}
    As stated in Lemma \ref{lem:PartitionOfUnityGn}, we have $\sum_k \phi_{n,k}\equiv 1$ on $M \setminus \tilde G_n$. After repeatedly differentiating this identity we obtain another identity that will be of use later on,
    \begin{align}\label{eqFinDim:Partition of Unity Differentiation Identity}
      \sum \limits_{k} \nabla^{i} \phi_{n,k}(x) \equiv 0 \textnormal{ in } M\setminus \tilde G_n,\;\; i=1,2,3.    
    \end{align}
  
  \end{rem}
  
  \subsection{Local interpolators} We have constructed a ``cube'' covering of $M \setminus \tilde G_n$ (since $P_{n,k}$ are only cubes when seen in the right exponential chart), and a corresponding partition of unity. Next, we need to fix a choice for ``local'' interpolating functions.  Specifically, we need to define what will take the place of the local linear and quadratic functions in the usual Whitney extensions.
  
  Recall that $\delta \in (0,1)$ was chosen in Remark \ref{rem: good balls cover M} so the exponential map was roughly an isometry in balls of radius $4\delta \sqrt{d}$. In particular this means that for $y \in M$, the inverse exponential map $\exp_{y}^{-1}$ is a well defined, uniformly smooth map from $B_{4\delta \sqrt{d}}(y)$ to a neighborhood of zero in $(TM)_y$. This smooth map defines local charts on $M$ having useful properties (they are normal systems of coordinates), and using such charts we shall introduce (locally defined) functions that will play the role of  ``linear'' and ``quadratic'' functions near a given point $y \in M$.
  
  \begin{DEF}\label{def:LinearAndQuadPolynomials}
    Given $y \in M$ and a vector $p \in (TM)_{y}$, define $l(p,y;\cdot):B_{4\delta\sqrt{d}}(y)\to\mathbb{R}$ by
    \begin{align*}
      &l(p,y;x) := (\exp_{y}^{-1}(x),p)_{g_{y}},\;\;\forall\;x\in B_{4\delta\sqrt{d}}(y).
    \end{align*}
    Given a self-adjoint linear transformation $D\in \L((TM)_{y})$, define $q(D,y;\cdot): B_{4\delta\sqrt{d}}(y)\to\mathbb{R}$ by 
    \begin{align*}
      & q(D,y;x) := \tfrac{1}{2}(D\exp_{y}^{-1}(x),\exp_{y}^{-1}(x))_{g_{y}},\;\;\forall\;x\in B_{4\delta\sqrt{d}}(y).
    \end{align*}	 

  \end{DEF}
  
  \begin{rem}\label{rem:FinDimApp l and q derivatives in a chart}
    An equivalent formulation of the above is the following. In $B_{4\delta \sqrt{d}}(y)$ one obtains coordinate functions $\xi^1,\ldots,\xi^d$ by choosing an orthonormal basis $\{e_i\}$ at $(TM)_y$ and setting
    \begin{align*}	
      \xi^i(x):= (e_i,(\exp_{y})^{-1}(x))_{g_y}.	
    \end{align*}
    Then, the functions $l$ and $q$ seen in these coordinates are simply linear and quadratic polynomials,
    \begin{align*}	
      l(p,y;x) = p_i\xi^i,\;\;\; q(D,y;x) = \tfrac{1}{2}D_{ij}\xi^i\xi^j.  	
    \end{align*}	
    Where $p_i$ and $D_{ij}$ are the components of $p$ and $D$ in the basis $\{e_i\}$.
	
    Moreover, these coordinates are \emph{normal}, meaning that the Christoffel symbols vanish at the origin of the system of coordinates $\xi^1=\ldots=\xi^d=0$, that is, at the point corresponding to $y$ itself. In particular, it follows that 	
    \begin{align*}
      \nabla l(p,y;y)= p, & \quad \nabla^2 l(p,y;y) = 0,\\
      \nabla q(D,y;y)= 0, & \quad \nabla^2 q(D,y;y) = D.
    \end{align*}			
    Which confirms the idea that $l$ and $q$ play the role of linear and quadratic functions near a point.	
  \end{rem}
  
  The next remark explains an important technical fact. Namely, for each $k$ the set $P_{n,k}^*$ lies in a sufficiently small neighborhood of $\hat y_{n,k}$ so that, given $p$ or $D$, the functions $l(p,\hat y_{n,k};\cdot)$ and $q(D,\hat y_{n,k};\cdot)$ are well defined and smooth in $P_{n,k}^*$.
  \begin{rem}\label{rem:local interpolators are well defined}
    Let $Q_{n,k}^*$, $q_{n,k}$, be as in the proof of Lemma \ref{lem:CubesOnM}, and let $y_{n,k}$, $\hat y_{n,k}$ be as introduced in \eqref{eqWhitney:DefOfYnkHat}.  As noted in the proof of Lemma \ref{lem:CubesOnM}, the common side length of each of the cubes $Q_{n,k}$ is at most $4\delta$. Since $q_{n,k}$ is the center of $Q_{n,k}$, it follows that $Q_{n,k}^*$ lies inside the ball of radius $(\tfrac{9}{8})2\delta \sqrt{d}$ centered at $q_{n,k}$. In this case, Remark \ref{rem: good balls cover M} says that
    \begin{align*}	
       P_{n,k}^* \subset B_{(\tfrac{101}{100})(\tfrac{9}{4})\delta \sqrt{d}}(y_{n,k}).
    \end{align*}	
    At the same time, $d(\hat y_{n,k},y_{n,k}) = d(y_{n,k},\tilde G_n)\leq \tilde h_n$, and $\tilde h_n \leq \delta/500$ by \eqref{eqFinDimApp: tilde hn def}. Then, the triangle inequality yields $d(x,\hat y_{n,k}) \leq d(x,y_{n,k})+d(\hat y_{n,k},y_{n,k})\leq \diam(P_{n,k}^*)+\tilde h_n \leq (\tfrac{101}{100})(\tfrac{9}{4})\delta\sqrt{d} + \tfrac{1}{500}\delta$, for $x\in P_{n,k}^*$. This shows that,
    \begin{align*}	
      P_{n,k}^* \subset B_{3\delta \sqrt{d}}(\hat y_{n,k}).
    \end{align*}	
    In light of the discussion at the beginning of this section, we know that $\exp_{\hat y_{n,k}}^{-1}$ is well defined and smooth in the larger ball $B_{4\delta \sqrt{d}}(\hat y_{n,k})$. Therefore, we conclude that given $p$ or $D$ the functions $l(p,\hat y_{n,k};\cdot)$ and $q(D,\hat y_{n,k};\cdot)$ are well defined functions in $P_{n,k}^*$ which are also smooth.

  \end{rem}

  We refer the reader to the Appendix (Definition \ref{def:Appendix Discrete Gradient}, \ref{def:Appendix Discrete Hessian}) for the definition of the discrete gradient and discrete Hessian,
  \begin{align*}
    \nabla^1_n u(x) \in (TM)_x,\;\;\;\nabla^2_n u(x) \in \L((TM)_x)
  \end{align*}
  defined for every $x \in \tilde G_n$. With these, we introduce the local interpolation operators $p^\beta_{u,k}(x)$. These are real valued functions defined as follows, recall $\hat y_{n,k}$ from \eqref{eqWhitney:DefOfYnkHat}, then
  \begin{align*}
    p^\beta_{u,k}: P_{n,k}^* \to \mathbb{R},
  \end{align*}	  
  is defined as follows
  \begin{align}
      p^\beta_{(u,k)}(x) & := 
	  \begin{cases} 
        u(\hat y_{n,k}) & \textnormal{ if } \beta \in(0,1)\\	 
        u(\hat y_{n,k})+l(\nabla^1_n u (\hat y_{n,k}),\hat y_{n,k};x) & \textnormal{ if } \beta \in [1,2)\\
        u(\hat y_{n,k})+l(\nabla^1_n u(\hat y_{n,k}), \hat y_{n,k};x)+q(\nabla^2_n u(\hat y_{n,k}), \hat y_{n,k};x) & \textnormal{ if } \beta = 2.
      \end{cases}\label{eqWhitney:DefOfPbetaU}     
  \end{align}
  Thus, $p^\beta_{(u,k)}$ yields respectively a constant/first order/second order approximation to $u$ in $P_{n,k}^*$.
  
  Using the chain rule, and the smoothness of $\exp_{\hat y_{n,k}}^{-1}$  in $P_{n,k}^*$ (as explained in Remark \ref{rem:local interpolators are well defined}), we have the following proposition.
  \begin{prop}\label{prop: local interpolation operators}
    The following estimate holds with a constant independent of $n$:
    \begin{align*}
      \|p^\beta_{(u,k)}\|_{C^{\beta}(P_{n,k}^*)} \leq \left \{ \begin{array}{ll}
        C |u(\hat y_{n,k})|  & \textnormal{ if } \beta \in (0,1) \\	  
        C \left ( | \nabla^1_n u(\hat y_{n,k})|_{g_{\hat y_{n,k}}} + |u(\hat y_{n,k})|  \right ) & \textnormal{ if } \beta \in [1,2)\\
	C \left ( | \nabla^2_n u(\hat y_{n,k})|_{g_{\hat y_{n,k}}}+ | \nabla^1_n u(\hat y_{n,k})|_{g_{\hat y_{n,k}}} + |u(\hat y_{n,k})| \right ) & \textnormal{ if } \beta=2.\\
      \end{array}\right.
    \end{align*}		
  \end{prop}

  \subsection{The Whitney extension} With the partition of unity $\{\phi_{n,k}\}_k$ and the local interpolation operators at hand, we are ready to introduce the Whitney extension operators $E^{\beta}_n$.
  \begin{DEF}\label{def:WhitneyExt}
  For each $n$, we define
  \begin{enumerate}
    \item The restriction operator $\tilde T_n: C_b^{\beta}(M)\to C(\tilde G_n)$, defined in the usual manner
    \begin{align*}
      \tilde T_n(u,x) := u(x)\;\;\forall\;x\in \tilde G_n.
    \end{align*}
    \item The extension operator of order $\beta$, $\tilde E_n^{\beta}: C(\tilde G_n) \to C^{\beta}_b(M)$ defined by
    \begin{align*}
      \tilde E_n^{\beta}(u,x) & := \begin{cases} 
	u(x) & \textnormal{ if } x \in \tilde G_n\\
	\sum \limits_{k} p^\beta_{(u,k)}(x)\phi_{n,k}(x) & \textnormal{ if } x\not\in \tilde G_n.
      \end{cases}	
    \end{align*}	  
    \item The ``projection'' map $\tilde \pi_n^{\beta} : C^{\beta}_b(M) \mapsto C^{\beta}_b(M)$ defined by
    \begin{align*}
      \tilde \pi_n^{\beta} := \tilde E_n^{\beta}\circ \tilde T_n.	
    \end{align*}
  \end{enumerate}	
  \end{DEF}

\begin{rem} The fact that $\tilde E_n^\beta$ and $\tilde \pi_n^\beta$ map to $C^\beta_b$ is not at all trivial, and it will be proved below in Theorem \ref{thm:EnTnContOperatorInCbetaNorm}.
\end{rem}

\begin{rem}\label{rem:extension preserves compact support}
On the other hand, it is not difficult to see that if $u \in C(\tilde G_n)$ vanishes in $\tilde G_n\setminus G_n$ then $\tilde E_n^\beta(u)$ vanishes outside $M_{n+2}$ and in particular has compact support. Indeed, by recalling \eqref{eqFinDimApp: Mn def}, \eqref{eqFinDimApp: Gn def}, and the definition of $P_{n,k}$, one can show in this case that for $x\not\in M_{n+2}$ and $k\in K_x$ one has that $u(\hat y_{n,k}),\nabla^1_nu(\hat y_{n,k}),$ and $\nabla^2_nu(\hat y_{n,k})$ all vanish, and thus $\tilde E_n^\beta(u,x)=0$ for $x \not \in M_n$.

Accordingly, if $u \in C^{\beta}_b(M)$ is a function with compact support, then for $n$ large enough $\tilde T_n\circ u$ vanishes in $\tilde G_n \setminus G_n$,  and it follows $\tilde \pi_n^\beta(u)$ is compactly supported inside $M_n$.

\end{rem}
  
Our immediate goal is controlling the regularity of $\tilde \pi_n^\beta u$ in terms of $u$. For the sake of notation, we shall write for the rest of this section
\begin{align}\label{eqWhitney:FisPinU}
  f(x) := \tilde \pi_n^\beta(u,x).
\end{align}

The following propositions, leading to Theorem \ref{thm:EnTnContOperatorInCbetaNorm}, intend show that the maps $\tilde \pi_n^\beta$ are well behaved with respect to the $C^\beta_b$ norm in a manner which is independent of the sets $\tilde G_n$.  The validity of these bounds in a manner that does not depend on the set $\tilde G_n$ is a crucial feature of the Whitney extension.  

Among these propositions, we highlight two. First, we have Proposition \ref{prop:BoundaryEstimate}, which says $\tilde \pi_n^\beta u(x)$ (and its respective derivatives) approach $u(x)$ as $x$ approaches $\tilde G_n$. Meanwhile, Proposition \ref{prop:InteriorEstimate} states that away from $\tilde G_n$ the functions $\tilde \pi_n^\beta u(x)$ have the correct regularity. Once again, we remind the reader that these estimates are standard for the Whitney extension when $M=\mathbb{R}^d$, and refer to \cite[Chap. 6, Section 2]{Stei-71}). Here we review their straightforward adaptation to more general $M$ for the sake of completeness.

\begin{prop}\label{prop:BoundaryEstimate}
  Let $x\in M\setminus \tilde G_n$ and $u\in C^\beta_b(M)$. There is a universal constant $C$ such that, if $\beta \in [0,3)$ and $f(x) := \tilde \pi_n^\beta(u,x)$, we have 
  \begin{align*}
    |f(x)-f(\hat x)| \leq C\|u\|_{C^\beta(M)}d(x,\tilde G_n)^{\min\{1,\beta\}}
  \end{align*}	    
  Furthermore,
  \begin{align*}
    |\nabla_a f(x)-\nabla_a f(\hat x)| \leq C\|u\|_{C^\beta(M)}d(x,\tilde G_n)^{\min\{1,\beta-1\}},\;\;\textnormal{ if } \beta \geq 1,\\
    |\nabla_{ab}^2 f(x)-\nabla_{ab}^2 f(\hat x)| \leq C\|u\|_{C^\beta(M)}d(x,\tilde G_n)^{\min\{1,\beta-2\}},\;\;\textnormal{ if } \beta \geq 2.	
  \end{align*}	
  Here, $\grad_a$ and $\grad^2_{ab}$ are respectively the components of the first and second covariant derivatives of $f$ with respect to an orthonormal frame.
\end{prop}

\begin{proof}
    For the sake of explaining the key ideas of the proof without getting distracted with technicalities, we postpone the proof of the higher derivatives estimates to Section \ref{subsec:proofs beta bigger than one}. 	

  Let $x \in M\setminus \tilde G_n$ and $\hat x \in \tilde G_n$ such that $d(x,\tilde G_n)=d(x,\hat x)$. Recalling that $f = u$ on $\tilde G_n$, and using the first property of $\{\phi_{n,k}\}_k$ from Lemma \ref{lem:PartitionOfUnityGn}, we have that $f(\hat x)-f(x)$ is equal to 	
    \begin{align*}		
      & u(\hat x)- \sum \limits_{k} u(\hat y_{n,k})\phi_{n,k}(x) \ \textnormal{ if } \beta \in [0,1),\\	
      & u(\hat x)- \sum \limits_{k} (u(\hat y_{n,k})+l(\nabla_n^1u(\hat y_{n,k}),\hat y_{n,k};x ))\phi_{n,k}(x) \ \textnormal{ if } \beta \in [1,2),\\
      & u(\hat x)- \sum \limits_{k} (u(\hat y_{n,k})+l(\nabla_n^1u(\hat y_{n,k}),\hat y_{n,k};x )+q(\nabla_n^2u(\hat y_{n,k}),\hat y_{n,k};x ) )\phi_{n,k}(x) \ \textnormal{ if } \beta \in [2,3).	  
    \end{align*}	  
   Let us consider each case individually. If $\beta \in [0,1)$, the identity $\sum \limits_k \phi_{n,k}(x)=1$ allows us to write 
   \begin{align*}	  
      f(\hat x)-f(x) & = \sum \limits_{k} (u(\hat x)-u(\hat y_{n,k}))\phi_{n,k}(x)\\
  	  & = \sum \limits_{k\in K_x} (u(\hat x)-u(\hat y_{n,k}))\phi_{n,k}(x). 
    \end{align*}
    The set $K_x$ being the one defined in \eqref{eqFinDim:Set Of Covering Cubes}. Then, the triangle inequality and $|\phi_{n,k}|\leq 1$ yields
    \begin{align*}
      |f(\hat x)-f(x)| \leq \|u\|_{C^\beta}\sum \limits_{k\in K_x} d(\hat x,\hat y_{n,k})^{\beta}.
    \end{align*}
    The triangle inequality says that
    \begin{align*}	 
      d(\hat x,\hat y_{n,k}) & \leq d(\hat x,x)+d(x,y_{n,k})+d(y_{n,k},\hat y_{n,k}),
    \end{align*}
    where, according to \eqref{eqWhitney:DefOfYnkHat}, $d(y_{n,k},\hat y_{n,k}) = d(y_{n,k}, \tilde G_n)$. In this case, we see that $d(y_{n,k},\hat y_{n,k})\leq d(x,\tilde G_n)+d(x,y_{n,k})$, and we conclude that
    \begin{align*}		  
      d(\hat x,\hat y_{n,k})  & \leq 2d(x,\tilde G_n)+2\diam(P_{n,k}^*).					
    \end{align*}  
    Then, using Remark \ref{rem: Whitney local diameter of balls}, it follows that
    \begin{align}	 
      d(\hat x,\hat y_{n,k}) & \leq 16d(x,\tilde G_n)	\;\;\forall\;k\in K_x.\label{eqFinDim:hat x and hat Ynk distance is bounded by distance to tilde Gn}				
    \end{align}  	
    Furthermore, recall \eqref{eqFinDim:Size Set Of Covering Cubes} which says that $\#K_x\leq N$. All of this leads to the estimate
    \begin{align*}
      |f(\hat x)-f(x)| \leq C\|u\|_{C^\beta} d(x,\tilde G_n)^{\beta}.
    \end{align*}
    Instead, if $\beta \in [1,2)$, we have	
   \begin{align*}	  
      f(\hat x)-f(x) & = \sum \limits_{k} (u(\hat x)-u(\hat y_{n,k}))\phi_{n,k}(x) + \sum \limits_{k}l(\nabla_n^1u(\hat y_{n,k}),\hat y_{n,k};x ))\phi_{n,k}(x)\\
  	  & = \sum \limits_{k\in K_x} (u(\hat x)-u(\hat y_{n,k}))\phi_{n,k}(x)+ \sum \limits_{k\in K_x}l(\nabla_n^1u(\hat y_{n,k}),\hat y_{n,k};x ))\phi_{n,k}(x).
    \end{align*}	
   Just as before, using the triangle inequality and the fact that $|\phi_n,k|\leq 1$, it follows that 	
   \begin{align*}	  
      |f(\hat x)-f(x)| & \leq \sum \limits_{k\in K_x} |u(\hat x)-u(\hat y_{n,k})|+ \sum \limits_{k\in K_x}|l(\nabla_n^1u(\hat y_{n,k}),\hat y_{n,k};x )|\\
	  & \leq \|u\|_{C^1}\sum \limits_{k\in K_x} d(\hat x,\hat y_{n,k})+\sum \limits_{k\in K_x} |\nabla_n^1 u (\hat y_{n,k})|_{g_x}d(\hat y_{n,k},x).
    \end{align*}	
   Proposition \ref{prop: discrete derivatives are controlled by continuous derivatives} in the Appendix guarantees that $|\nabla_n^1 u (\hat y_{n,k})|_{g_x} \leq C\|u\|_{C^1}$, for some universal $C$. Using this bound, the fact that $\|u\|_{C^1}\leq \|u\|_{C^\beta}$ for $\beta\geq 1$, and the last inequality above, it follows that
   \begin{align*}	  
      |f(\hat x)-f(x)| & \leq C\|u\|_{C^\beta}\sum \limits_{k\in K_x} d(\hat x,\hat y_{n,k}).
   \end{align*}	  		
   From this point one argues exactly as done for $\beta \in [0,1)$ to conclude that
   \begin{align*}	  
      |f(\hat x)-f(x)| & \leq C\|u\|_{C^\beta}d(x,\tilde G_n).
   \end{align*}	  		   
   The proof for $\beta \in [2,3)$ is entirely analogous, and we leave the details to the reader. This proves the first estimate.
   	
    As mentioned above, we refer to Section \ref{subsec:proofs beta bigger than one} for the proofs for $\nabla_a f$ and $\nabla_{ab}^2 f$.
	
\end{proof}

We delay the technical proof of the following auxiliary proposition until the Appendix \ref{sec:ProofOfRegAtDistToBoundary}.

\begin{prop}\label{prop:RegularityOfEnTnDistToGn}
  Let $x  \in M\setminus \tilde G_n$ and $u\in C^\beta$. There is a universal constant $C$ such that the following bounds hold. First, if $0\leq\beta<1$, 
  \begin{align*}	
    |\nabla (\tilde E_n^\beta\circ \tilde T_n)u(x)|\leq C\|u\|_{C^\beta}d(x,\tilde G_n)^{\beta-1}.
  \end{align*}	
  If $1\leq\beta<2$, we have
  \begin{align*}	
    |\nabla^2(\tilde E_n^\beta\circ \tilde T_n)u(x)|\leq C\|u\|_{C^\beta}d(x,\tilde G_n)^{\beta-2}.
  \end{align*}	
  Finally, if $2\leq\beta<3$, we have
  \begin{align*}	
    |\nabla^3(\tilde E_n^\beta\circ \tilde T_n)u(x)|\leq C\|u\|_{C^\beta}d(x,\tilde G_n)^{\beta-3}.
  \end{align*}	
 
\end{prop}

Using Proposition \ref{prop:RegularityOfEnTnDistToGn} it is easy to show that $\tilde \pi_n^\beta(u,x)$ is regular away from $\tilde G_n$.
\begin{prop}\label{prop:InteriorEstimate}
  Let $x_0$ and $r$ be fixed such that $B_{4r}(x_0) \subset M \setminus  \tilde G_n$. Then, given $u\in C^\beta_b(M)$, for $f=\tilde \pi_n^\beta u$ we have the estimate
  \begin{align*}
    [\nabla^i f]_{C^{\beta-i} (B_r(x_0))} \leq C\|u\|_{C^\beta(M)},\;\;\textnormal{ for } \beta \in [i,i+1),\;\;i=0,1,2.
  \end{align*}	    
\end{prop}

\begin{proof}
	Here, we only prove the statement for $\beta\in(0,1)$, and we defer the remaining two cases until later, in Section \ref{subsec:proofs beta bigger than one}.

  \textbf{The case $\beta \in (0,1)$.} Let $x_1,x_2 \in B_r(x_0)$, and $x(t):[0,L]\to M$ a minimal geodesic between them, parametrized with respect to arc length, so $L = d(x_1,x_2)$. Then, by the triangle inequality
  \begin{align*}
    d(x(t),x_0) & \leq d(x(t),x_1)+d(x_1,x_0) \\
	  & \leq d(x_1,x_2)+d(x_1,x_0)\\
	  & \leq d(x_1,x_0)+d(x_2,x_0)+d(x_1,x_0) \leq 3r.	  
  \end{align*}	  
  In particular, it follows that $d(x(t),\tilde G_n) \geq r$ for all $t\in [0,L]$. Then,
  \begin{align*}
    |f(x_1)-f(x_2)| & = \int_0^L \frac{d}{dt}f(x(t))\;dt\\
	  & = \int_0^L(\nabla f(x(t)),\dot x(t))\;dt \leq C\|u\|_{C^\beta(M)}r^{\beta-1}L,
  \end{align*}
  the last inequality being thanks to Proposition \ref{prop:RegularityOfEnTnDistToGn} and the fact that $d(x(t),\tilde G_n)\geq r$ for all $t$. Since $d(x_1,x_2)\leq 2r$ and $\beta-1<0$, we conclude that
  \begin{align*}
    |f(x_1)-f(x_2)| \leq C\|u\|_{C^\beta(M)}r^{\beta-1}d(x_1,x_2) \leq C\|u\|_{C^\beta(M)}d(x_1,x_2)^{\beta}.
  \end{align*}
  The remaining cases (those with $\beta\geq 1$) are proved in Section \ref{subsec:proofs beta bigger than one}.
\end{proof}

For readers with a background in elliptic PDE, and in particular, those not familiar with the Whitney extension, it may be useful to make a na\"ive but possibly illustrative analogy with the derivation of global regularity estimates for solutions of elliptic equations.  Proposition \ref{prop:InteriorEstimate} is a kind of interior estimate, where in order to bound the solution in a ball, one needs the ``equation'' (here, being the extension) to take place in a bigger ball. Likewise, Proposition \ref{prop:BoundaryEstimate} is analogous to estimates at the boundary. In this sense, $\tilde G_n$ is the kind of boundary and $u$ provides the boundary values. Furthermore, the way these two estimates are ``glued'' in the next proof bears a great resemblance to the proof of global regularity estimates for elliptic equations from interior and boundary estimates.

With the previous two estimates in hand, we are ready to prove that $\tilde \pi_n^\beta$ is a bounded map from $C^\beta_b$ to $C^\beta_b$.
\begin{thm}\label{thm:EnTnContOperatorInCbetaNorm}
  If $u\in C^\beta_b(M)$, then $\tilde \pi_n^\beta \in C^\beta_b(M)$ and, for some universal $C$,
  \begin{align*}
    \|\tilde \pi_n^\beta u\|_{C^\beta(M)}\leq C\|u\|_{C^\beta(M)}.		
  \end{align*}		
\end{thm}

\begin{proof}
  As before we write $f=\tilde \pi_n^\beta u$.  Let us first show
  \begin{align*}	
    \|f\|_{L^\infty} \leq C\|u\|_{C^\beta},	
  \end{align*}
  for all $\beta \in [0,2]$. If $x \in M\setminus \tilde G_n$, then
  \begin{align*}
    f(x) = \sum \limits_{k} p^\beta_{(u,k)}(x)\phi_{n,k}(x).
  \end{align*}
  Proposition \ref{prop: local interpolation operators} implies that
  \begin{align*}
    \sup \limits_{x\in P_{n,k}^*}|p^\beta_{(u,k)}(x)| \leq C\|u\|_{C^\beta}.
  \end{align*}
  Then,
  \begin{align*}
    \sup \limits_{x\in M}|f(x)|\leq C\|u\|_{C^\beta}.
  \end{align*}
  Let us now prove $f(x)$ has the right regularity. The argument is separated in cases depending on $\beta$, in each case the proof will consist in ``gluing'' the interior and boundary estimates proved for $\tilde \pi_n^\beta$ in Propositions \ref{prop:InteriorEstimate} and \ref{prop:BoundaryEstimate}.
  
  \textbf{The case $\beta \in [0,1)$.}  Let $x_1,x_2\in M \setminus \tilde G_n$. If $4d(x_1,x_2) < \max\{d(x_1,\tilde G_n),d(x_2,\tilde G_n)\}$, then we can apply Proposition \ref{prop:InteriorEstimate} and conclude that
  \begin{align*}
    |f(x_1)-f(x_2)| & \leq C\|u\|_{C^\beta}d(x_1,x_2)^{\beta}.
  \end{align*}
  Consider on the other hand the case $4d(x_1,x_2)\geq \max\{d(x_1,\tilde G_n),d(x_2,\tilde G_n)\}$, then, for $\hat x_i\in \tilde G_n$ such that $d(x_i,\hat x_i)=d(x_i,\tilde G_n)$ we have
  \begin{align*}
    |f(x_1)-f(x_2)| & \leq |f(x_1)-f(\hat x_1)|+|f(\hat x_1)-f(\hat x_2)|\\
	  & \;\;\;\;+ |f(\hat x_2)-f(x_2)|.
  \end{align*}	  
  Applying Proposition \ref{prop:BoundaryEstimate} to the first and third terms, and recalling that $f(\hat x_i)=u(\hat x_i)$,
  \begin{align*}
    |f(x_1)-f(x_2)| & \leq C\|u\|_{C^\beta}(d(x_1,\hat x_1)^\beta+d(x_2,\hat x_2)^\beta)+\|u\|_{C^\beta}d(\hat x_1,\hat x_2)^\beta.
  \end{align*}	  
  Given that in this case we have $d(x_1,\hat x_1)+d(x_2,\hat x_2)\leq 8d(x_1,x_2)$, we can use the triangle inequality to conclude that $d(\hat x_1,\hat x_2) \leq 10d(x_1,x_2)$, therefore
  \begin{align*}
    |f(x_1)-f(x_2)| & \leq C\|u\|_{C^\beta}d(x_1,x_2)^\beta.
  \end{align*}
  Combining the above estimates we obtain the desired bound for $\beta \in [0,1)$. 
  
  \textbf{The case $\beta\in [1,2)$.} Let us show first that if $u \in C^1_b$, then $f\in C^1_b$, and $\nabla f(x)=\nabla_n^1u(x)$ for every $x\in \tilde G_n$. In order to do this, we shall show that $\nabla_a f(x)$ is continuous in $x$ for every index $a$. Note that 
  \begin{align*}
      \nabla_{a}f(x) & = \sum \limits_{k} \nabla_a \left ( l( \nabla^1_n u (\hat y_{n,k}),\hat y_{n,k};x)\right )\phi_{n,k}(x)\\
  	& \;\;\;\;+\sum \limits_{k} \left ( u(\hat y_{n,k})+l( \nabla^1_n u (\hat y_{n,k}),\hat y_{n,k};x)\right )\nabla_a \left ( \phi_{n,k}(x) \right ).
  \end{align*}
  Recall that any point $x_0\in M\setminus \tilde G_n$ has a neighborhood where at most $N$ of the terms in the above sums are non-zero. Since each term is continuous in $x$, it follows that $\nabla_af(x)$ is continuous in $M\setminus \tilde G_n$. It remains to show the continuity for a point $x_0 \in \tilde G_n$. Let us also recall Remark \ref{rem: Whitney local diameter of balls}, which says that for any $x\in M\setminus \tilde G_n$ we have
  \begin{align*}
    \diam(P_{n,k}^*)\leq 7d(x,\tilde G_n),\;\;\forall\;k\in K_x,
  \end{align*}
  where $K_x$ was defined in \eqref{eqFinDim:Set Of Covering Cubes}. Then, since $d(x_0,\tilde G_n \setminus \{x_0\})>0$, it follows that if $x$ is sufficiently close to $x_0\in \tilde G_n$, then $x\in P_{n,k}^*$ implies that there is a unique closest point in $\tilde G_n$ to $y_{n,k}$, $x_0$ itself. In other words (recall $\hat y_{n,k}$ was defined in \eqref{eqWhitney:DefOfYnkHat}),
  \begin{align*}
    \hat y_{n,k} = x_0\;\;\forall\;x\in K_x.
  \end{align*}
  This means that if $x$ is sufficiently close to $x_0$, $\nabla_{a}f(x)$ has the form
  \begin{align*}
      \nabla_{a}f(x) & = \sum \limits_{k} \nabla_a \left ( l( \nabla^1_n u (x_0),x_0;x)\right )\phi_{n,k}(x)\\
  	& \;\;\;\;+\sum \limits_{k} \left ( u(x_0)+l( \nabla^1_n u (x_0),x_0;x)\right )\nabla_a \left ( \phi_{n,k}(x) \right )\\
        &  = \nabla_a \left ( l( \nabla^1_n u (x_0),x_0;x)\right ).  
  \end{align*}
  Where we used that $\phi_{n,k}$ is a partition of unity: (1) in Lemma \ref{lem:PartitionOfUnityGn} and the identity \eqref{eqFinDim:Partition of Unity Differentiation Identity} to obtain the last identity. From the last inequality we see that as $x\to x_0$ we have $\nabla_a f(x) \to \nabla_af(x_0) = \nabla_n^1u(x_0)$ and thus $u \in C^1_b$.
  
  Next, let us show the H\"older bound for $\beta \in (1,2)$. Let $x_1,x_2\in M_n$. If $4d(x_1,x_2) < \max\{d(x_1,\tilde G_n),d(x_2,\tilde G_n)\}$, then we can apply Proposition \ref{prop:InteriorEstimate} and conclude that
  \begin{align*}
    |\nabla_af(x_1)-\nabla_af(x_2)| \leq C\|u\|_{C^\beta}d(x_1,x_2)^{\beta-1}.
  \end{align*}
  If instead we have $4d(x_1,x_2)\geq \max \{d(x_1,\tilde G_n),d(x_2,\tilde G_n)\}$, then the triangle inequality yields
  \begin{align*}
    |\nabla_af(x_1) -\nabla_af(x_2)| \leq |\nabla_af(x_1) -\nabla_af(\hat x_1)|+|\nabla_af(\hat x_1) -\nabla_af(\hat x_2)|+|\nabla_af(\hat x_2) -\nabla_af(x_2)|.
  \end{align*}
  Therefore, by Proposition \ref{prop:BoundaryEstimate}
  \begin{align*}
    |\nabla_af(x_1) -\nabla_af(x_2)| \leq C\|u\|_{C^\beta}d(x_1,\hat x_1)^{\beta-1}+|\nabla_af(\hat x_1) -\nabla_af(\hat x_2)|.
  \end{align*}
  On the other hand, since in a neighborhood of $\hat x_i$ we have $\grad \tilde \pi_n^\beta u(x)=\nabla^1_n u(\hat x_i)$ (see Def \ref{def:WhitneyExt} and \ref{def:Appendix Discrete Gradient}), the first half of Proposition \ref{prop:InterpolatedDerivativesAreRegular} says that
  \begin{align*}
    |\nabla_af(\hat x_1) -\nabla_af(\hat x_2)|\leq C\|u\|_{C^\beta}d(\hat x_1,\hat x_2)^{\beta-1}.
  \end{align*}
  Hence, it follows that
  \begin{align*}
    |\nabla_af(x_1) -\nabla_af(x_2)| \leq C\|u\|_{C^\beta}d(x_1,x_2)^{\beta-1},\;\;\forall\;x_1,x_2\in M_{n}.
  \end{align*}
  
  \textbf{The case $\beta \in [2,3)$.} An argument entirely analogous to that used for $\beta \in [1,2)$ shows that if $u\in C^2_b$, then $f\in C^2_b$, with $\nabla^i f(x) = \nabla^i_n u(x)$ for $i=1,2$ and every $x\in \tilde G_n$.
  
  Then, let us prove that $\nabla^2_{ab}f(x)$ are H\"older continuous for $\beta>2$. As in the previous cases, suppose first that $4d(x_1,x_2)$ is no larger than $\max\{d(x_1,\tilde G_n),d(x_2,\tilde G_n)\}$. In this case, Proposition \ref{prop:InteriorEstimate} yields
  \begin{align*}
    |\nabla_{ab}^2f(x_1)-\nabla_{ab}^2f(x_2)|\leq C\|u\|_{C^\beta}d(x_1,x_2)^{\beta-2}.
  \end{align*}
  Consider now the case where $4d(x_1,x_2) \geq \max\{d(x_1,\tilde G_n),d(x_2,\tilde G_n)\}$. We shall argue in a parallel manner to the case $\beta \in [1,2)$. First off, we have
  \begin{align*}
    |\nabla_{ab}^2f(x_1)-\nabla_{ab}^2f(x_2)| & \leq |\nabla_{ab}^2f(x_1)-\nabla_{ab}^2f(\hat x_1)| +|\nabla_{ab}^2f(\hat x_1)-\nabla_{ab}^2f(\hat x_2)|\\
      & \;\;\;\;+|\nabla_{ab}^2f(\hat x_2)-\nabla_{ab}^2f(x_2)|.   	 
  \end{align*}	  
  Next, since in a neighborhood of $\hat x_i$ we have $\grad^2 \tilde \pi_n^\beta u(x)=\nabla^2_n u(\hat x_i)$ (see Def \ref{def:WhitneyExt} and \ref{def:Appendix Discrete Gradient}), the second part of Proposition \ref{prop:InterpolatedDerivativesAreRegular} says that
  \begin{align*}
    |\nabla_{ab}^2f(\hat x_1)-\nabla_{ab}^2f(\hat x_2)|\leq C\|u\|_{C^\beta}d(\hat x_1,\hat x_2)^{\beta-2}.
  \end{align*}	     
  Since $d(\hat x_1,\hat x_2) \leq 10d(x_1,x_2)$ and $4d(x_1,x_2)$ is larger than $d(x_1,\tilde G_n)$ and $d(x_2,\tilde G_n)$,
  \begin{align*}
    |\nabla_{ab}^2f(x_1)-\nabla_{ab}^2f(x_2)|\leq C\|u\|_{C^\beta}d(x_1,x_2)^{\beta-2},\;\;\forall\;x_1,x_2\in M_{n}.	  
  \end{align*}	   
  This concludes the proof of the theorem.
\end{proof}

The operators $\tilde \pi_n^\beta$ also enjoy the useful property of having a finite range of dependence, a property that will play a role in some arguments of Section \ref{sec: Min-Max formula for M,g}.

\begin{lem}\label{lem:EnTnRangeOfDependence}
  Assume that $\beta\in [0,3)$.  There is a universal constant $C$, such that if $K,K'\subset M$ are open sets such that $d(K', M\setminus K)\geq r+10^3h_n$, then
  \begin{align*}
    \norm{\tilde \pi_n^\beta u -\tilde \pi_n^\beta v}_{C^\beta(K)}\leq C(1+r^{-\beta}) \norm{u-v}_{C^\beta(K')},\;\;\forall\;u,v\in C^\beta_b(M).
  \end{align*}
\end{lem}

\begin{proof}
  Given $K$ and $K'$ with $d(K',M\setminus K)\geq r+10^3h_n$. It will be convenient to introduce an ``intermediate'' set, 
  \begin{align*}
    \tilde K:= \{ x\in M \mid d(x,K)\leq 400\tilde h_n \}.
  \end{align*}
  In other words, $\tilde K$ is the closure of the $400 \tilde h_n$-neighborhood of $K$. Thanks to the triangle inequality and the assumption on $K$ and $K'$ we have $d(\tilde K, M\setminus K')\geq r$.
  
  \noindent Next, we construct a function $\eta = \eta_{\tilde K,K'}$ such that 
  \begin{align*}
    0\leq \eta\leq 1,\;\;\eta \equiv 1 \textnormal{ in } \tilde K,\;\;\eta \equiv 0 \textnormal{ in } M\setminus K'.
  \end{align*}
  It is not difficult to see that $\eta$ can be chosen so that (for some universal $C$)
  \begin{align*}
    \|\eta\|_{C^\beta(M)} \leq C\left (1+\frac{1}{d(K',M\setminus K'')^\beta} \right ) \leq C\left ( 1+r^{-\beta} \right ).
  \end{align*}	 
  In fact, this can be done using the Whitney decomposition itself, see the ``regularized distance'' construction in \cite[Chapter 6, Section 2.1]{Stei-71} . 
  
  Let $u,v\in C^\beta_b$. As proved later in Proposition \ref{prop:Appendix Estimate on Locality of the Extension} (see Appendix), if $x\in M$ and $w \in C^\beta_b$ is identically zero in $B_{400 \tilde h_n}(x)$, then
  \begin{align*}
    \tilde \pi_n^\beta w \equiv 0 \textnormal{ in } B_{100\tilde h_n}(x).  
  \end{align*}
  We apply this to the function $w = (u-v)-\eta (u-v) \in C^\beta_b$, and to any $x\in K$, making use of the fact that $w$ vanishes in $\tilde K$.  It follows that for every $x$ in a small neighborhood of $K$ we have
  \begin{align*}
    \tilde \pi_n^\beta (u,x) = \tilde \pi_n^\beta(\eta u,x).	  
  \end{align*}	  
  In this case, it is clear that $\|\tilde \pi_n^\beta u\|_{C^\beta(K)}\leq \|\tilde \pi_n^\beta (\eta u)\|_{C^\beta(M)}$. Then, by Theorem \ref{thm:EnTnContOperatorInCbetaNorm}, 
  \begin{align*}
    \|\tilde \pi_n^\beta u\|_{C^\beta(K)} \leq C\|\eta u\|_{C^\beta(M)}.
  \end{align*}
  Using that $\eta \equiv 0$ in $M\setminus K'$, the Leibniz rule, and the bound on $\|\eta\|_{C^\beta_b}$, we conclude that
  \begin{align*}
    \|\tilde \pi_n^\beta u\|_{C^\beta(K)} \leq C(1+r^{-\beta})\|u\|_{C^\beta(K')}.
  \end{align*}
 
\end{proof}

\begin{lem}\label{lem:EnTnEstToUInC3}
  There is universal constant $C$ such that for any $u \in C^{3}_b(M)$ we have
  \begin{align*}
    \|\tilde \pi_n^\beta u-u\|_{C^{\beta}(M)} & \leq Ch_n^\gamma \|u\|_{C^3(M)}.
  \end{align*}		
  Here, $\gamma= i-\beta$ if $\beta \in [i-1,i)$, for $i=1,2,3$.
\end{lem}

\begin{rem}\label{rem:C3 norm assumption can be weakened}
  It will be evident from the proof, that the $C^3_b$ norm on the right hand side can be weakened when $\beta<2$. Here we simply state the lemma with $C^3_b$ for the sake of brevity.
\end{rem}

\begin{proof}
  Recall we are writing $f$ for $\tilde \pi_n^\beta u$, and that given $x\in M$ we write $\hat x$ for an element of $\tilde G_n$ for which $d(x,\hat x)=d(x,\tilde G_n)$. 
  
  We begin by estimating $\|f-u\|_{L^\infty(M)}$. Since $f\equiv u$ in $\tilde G_n$, we have
  \begin{align*}
    |f(x)-u(x)| & \leq |f(x)-f(\hat x)|+|u(\hat x)-u(x)|\\
      & \leq \|f\|_{C^1} d(x,\hat x)+\|u\|_{C^1}d(x,\hat x)\\
      & \leq C\|u\|_{C^3}d(x,\hat x).	
  \end{align*}
  Then, regardless of $\beta$ we have,
  \begin{align*}
    \sup \limits_{x \in M}|f(x)-u(x)| & \leq \sup\limits_{x\in M} C\|u\|_{C^3}d(x,\tilde G_n) \leq C\|u\|_{C^3}\tilde h_n.
  \end{align*}   
  Note this already shows $\|f-u\|_{L^\infty(M)}$ goes to zero with a rate determined by $\tilde h_n$.  To bound $\|f-u\|_{C^\beta(M)}$, it remains to control the H\"older seminorm of either $u$, $\nabla u$, or $\nabla^2  u$, depending on the range where $\beta$ lies. Let us treat the case $\beta \in (0,1)$ first, which means we must estimate $[u]_{C^\beta}$. We defer the proof of the remaining two cases ($\beta \in [1,2)$ and $\beta \in [2,3$) until later,  in Section \ref{subsec:proofs beta bigger than one}.

  \textbf{The case $\beta \in [0,1)$.} Let $x_1,x_2\in K$. We shall bound
  \begin{align*}
    \frac{|f(x_1)-u(x_1)-(f(x_2)-u(x_2))|}{d(x_1,x_2)^\beta}.
  \end{align*}
  In what follows, it will be useful to fix $\hat x_i \in \tilde G_n$ such that $d(x_i,\hat x_i) = d(x_i,\tilde G_n)$ for $i=1,2$. First, suppose that $d(x_1,x_2) \leq \max \{d(x_1,\hat x_1),d(x_2,\hat x_2) \}$, then
  \begin{align*}
    |f(x_1)-u(x_1)-(f(x_2)-u(x_2))| \leq \|f-u\|_{C^1(M)}d(x_1,x_2),
  \end{align*}
  \begin{align*}
    \frac{|f(x_1)-u(x_1)-(f(x_2)-u(x_2))|}{d(x_1,x_2)^{\beta}} & \leq \frac{\|f-u\|_{C^1}d(x_1,x_2)}{d(x_1,x_2)^{\beta}}.
  \end{align*}	
  Since $\|f-u\|_{C^1} \leq \|f\|_{C^1}+\|u\|_{C^1}$,  Theorem \ref{thm:EnTnContOperatorInCbetaNorm} yields $\|f-u\|_{C^1} \leq C\|u\|_{C^1}$.

  Using that $\beta<1$, we have $d(x_1,x_2)^{1-\beta} \leq \max\{d(x_1,\hat x_1)^{1-\beta},d(x_2,\hat x_2)^{1-\beta} \} \leq \tilde h_n^{1-\beta}$. Then, for this case we have	 
  \begin{align*}	
    \frac{|f(x_1)-u(x_1)-(f(x_2)-u(x_2))|}{d(x_1,x_2)} & \leq C\|u\|_{C^1(M)}\tilde h_n^{1-\beta}.
  \end{align*}
  Second, let us consider the case where $d(x_1,x_2)>\max\{ d(x_1,\hat x_1),d(x_2,\hat x_2)\}$. Then, we proceed by writing
  \begin{align*}
    |f(x_1)-u(x_1)-(f(x_2)-u(x_2))| & \leq |f(x_1)-u(x_1)|+|f(x_2)-u(x_2)|.
  \end{align*}
  Next, due to $f=u$ in $\tilde G_n$, for $i=1,2$ we have
  \begin{align*}
    & |f(x_i)-u(x_i)| = |f(x_i)-u(x_i)-(f(\hat x_i)-u(\hat x_i))| \leq (\|f\|_{C^1}+\|u\|_{C^1})d(x_i,\hat x_i),
  \end{align*}
  and since $\|f\|_{C^1}\leq C\|u\|_{C^1}$ (Theorem \ref{thm:EnTnContOperatorInCbetaNorm}), we conclude that
  \begin{align*}
    & |f(x_i)-u(x_i)|  \leq C\|u\|_{C^1} d(x_i,\hat x_i),\;\;i=1,2.
  \end{align*}
  The assumption $d(x_1,x_2)>\max\{d(x_1,\hat x_1),d(x_2,\hat x_2)\}$ yields that $d(x_i,\hat x_i) \leq d(x_i,\hat x_i)^{1-\beta}d(x_1,x_2)^\beta$ and furthermore $d(x_i,\hat x_i) \leq \tilde h_n^{1-\beta}d(x_1,x_2)^\beta$ both for $i=1,2$ (this uses again that $\beta<1$, since it means that $t\to t^{1-\beta}$ is nondecreasing). Therefore,
  \begin{align*}
    & |f(x_i)-u(x_i)| \leq C\|u\|_{C^1}d(x_i,\hat x_i)^{1-\beta}d(x_1,x_2)^\beta \leq C\|u\|_{C^1}\tilde h_n^{1-\beta}d(x_1,x_2)^\beta.
  \end{align*}
  Then, in this case we also conclude that
  \begin{align*}
    \frac{|f(x_1)-u(x_1)-(f(x_2)-u(x_2))|}{d(x_1,x_2)^{\beta}} & \leq C\|u\|_{C^1}\tilde h_n^{1-\beta}.
  \end{align*}
  Combining the estimates for either case, we conclude that
  \begin{align*}
    [f]_{C^\beta(M)} = \sup \limits_{x_1\neq x_2}\frac{|f(x_1)-u(x_1)-(f(x_2)-u(x_2))|}{d(x_1,x_2)^{\beta}} \leq C\|u\|_{C^1}\tilde h_n^{1-\beta}.
  \end{align*}
  Now, since $\tilde h_n\leq 1$ always, we have $\tilde h_n \leq \tilde h_n^{1-\beta}$ for all $n$, therefore, we have proved that
  \begin{align*}
    \|f-u\|_{C^\beta(M)}  & := \|f-u\|_{L^\infty(M)}+[f]_{C^\beta(M)}\\
	& \leq C\|u\|_{C^1(M)}\tilde h_n+C\|u\|_{C^1(M)}\tilde h_n^{1-\beta},\\
	& \leq C\|u\|_{C^1(M)}\tilde h_n^{1-\beta}.
  \end{align*}
  For the proofs for $\beta \geq 1$, see Section \ref{subsec:proofs beta bigger than one}.

\end{proof}


\subsection{The Whitney extension is almost order preserving}\label{sec:WhitneyAlmostOrderPreserving}

When $\beta \in [0,1)$ it turns out that $\tilde E_n^{\beta}$ preserves the ordering of functions.

\begin{rem}\label{rem: interpolation preserves order if beta less than one}
  Suppose $\beta <1$. If $u,v\in C(\tilde G_n)$ and $u(x)\leq v(x)$ $\forall\; x\in \tilde G_n$, then
  \begin{align*}	
    \tilde E_n^{\beta}(u,x)\leq \tilde E_n^{\beta}(v,x)\;\;\forall\;x\in M.
  \end{align*}
  Indeed, take $u\leq v$ in $\tilde G_n$ and $x \in M\setminus \tilde G_n$. Then, from the definition of $E^{\beta}_n$ when $\beta<1$, we have
  \begin{align*}
    u(x) & = \sum \limits_{k} u(\hat y_{n,k})\phi_{n,k}(x)\\
      & \leq \sum \limits_{k} v(\hat y_{n,k})\phi_{n,k}(x) = v(x),		   	   
  \end{align*}	   
  where we used that  $\phi_{n,k}\geq 0$ and $u(\hat y_{n,k})\leq v(\hat y_{n,k})$ for every $k$.	
\end{rem}
  It is unclear --or rather unlikely-- that the operators continue to be order preserving for $\beta$ larger than $1$. However, when considering the extension among functions in $\tilde G_n$ that are sufficiently regular (in the sense that they are the restriction of smooth functions) then $E^\beta_n$ preserves the ordering up to a small correcting function whose $C^\beta$ norm vanishes as $n$ goes to infinity. It is worthwhile to point out to a recent preprint of Fefferman, Israel, and Luli \cite{FeffermanIsraelLuli-2016}, where a closely related question, the interpolation of functions with a positivity constraint, is studied.

 The next proposition -which is chiefly needed for Lemma \ref{lem: extension operators preserve order up to small error} below- quantifies the intuitive fact that if $u \in C^3_b$ vanishes at a point $x_0\in \tilde G_n$, and $u\geq 0$ everywhere in $\tilde G_n$, then the gradient and the negative eigenvalues of the Hessian of $\tilde \pi_n^\beta u$ at $x_0$ must be small when $n$ is large.
 
We will need a cutoff function in the next few proofs.  We fix one, and call it $\phi_0$ such that
\begin{align}\label{eqWhitney:Phi0Cutoff}
	\phi_0:\real\to\real,\ 0\leq \phi_0\leq1,\ \phi_0\ \text{smooth},\ \phi_0\equiv 1\ \text{in}\ [-1,1],\ \text{and}\ \phi_0\equiv0\ \text{in}\ \real\setminus[2,2].
\end{align}
  
\begin{prop}\label{prop:local minima in Gn almost gives zero gradient nonnegative Hessian}
  Let $w\in C^3_b(M)$ be nonnegative in $\tilde G_n$ and such that $w(x_0)=0$ at some $x_0 \in \tilde G_n$. Then, with some universal $C$ we have
  \begin{align*}
    |\nabla \tilde \pi_n^\beta w(x_0)|_{g_{x_0}}\leq C\|w\|_{C^3}h_n \textnormal{ if } \beta\geq 1.\\
    |(\nabla^2 \tilde \pi_n^\beta w(x_0))_-|_{g_{x_0}} \leq C\|w\|_{C^3}h_n \textnormal{ if } \beta\geq 2.	
  \end{align*}	  
  Here, we recall that $\tilde h_n$ is as defined in \eqref{eqFinDimApp: tilde hn def}, and note that for a given matrix $D$, $D_-$ denotes its negative part.
\end{prop}

\begin{proof}[Proof of Proposition \ref{prop:local minima in Gn almost gives zero gradient nonnegative Hessian}]

  According to 2) in Proposition \ref{prop:Appendix Cbeta implies bound on Taylor remainder} if $d(x,x_0) \leq 4\delta\sqrt{d}$ (recall $\delta$ was defined in Remark \ref{rem: good balls cover M}), then
  \begin{align*}
    |w(x)-w(x_0)-l(\nabla \tilde \pi_n^\beta w(x_0),x_0;x)|\leq \|u\|_{C^2}d(x,x_0)^2. 
  \end{align*}	
  Then, using that $w(x_0)=0$ and $w(x)\geq 0$ for all $x\in \tilde G_n$, we conclude that
  \begin{align}\label{eqFinDimApp: positivity first order}
    0\leq l(\nabla \tilde \pi_n^\beta w(x_0),x_0;x)+\|\tilde \pi_n^\beta w\|_{C^2}d(x,x_0)^2,\;\;\forall\;x\in \tilde G_n \cap B_{4\delta \sqrt{d}}(x_0).
  \end{align}	
  Then, using that $d(x_0,\tilde G_n)\leq \tilde h_n$, it is not hard to see there is some  $x_1 \in \tilde G_n$ with $d(x_1,x_0)\leq 4\delta\sqrt{d}$ such that (for some universal $C$),
  \begin{align*}
    l(\nabla \tilde \pi_n^\beta w(x_0),x_0;x_1)\leq -C^{-1}|\nabla \tilde \pi_n^\beta w(x_0)|_{g_{x_0}}|\exp_{x_0}^{-1}(x_1)|_{g_{x_0}}.
  \end{align*}
  Then, using \eqref{eqFinDimApp: positivity first order} with this $x_1$, and, Theorem \ref{thm:EnTnContOperatorInCbetaNorm}, we see that
  \begin{align*}
    |\nabla \tilde \pi_n^\beta w(x_0)|_{g_{x_0}} \leq C\|\tilde \pi_n^\beta w\|_{C^2}d(x,x_0)\leq C\|w\|_{C^2}h_n,
  \end{align*}
  proving the first estimate. 
  
  Next, we prove the second estimate. Let $\beta\geq 2$. Assume first that $\nabla \tilde \pi_n^\beta w(x_0)=0$. Then, we may use Proposition \ref{prop:Appendix Cbeta implies bound on Taylor remainder} as before to obtain,
  \begin{align}\label{eqFinDimApp: positivity second order}
    0\leq q(\nabla^2 \tilde \pi_n^\beta w(x_0),x_0;x)+\|\tilde \pi_n^\beta w\|_{C^3}d(x,x_0)^3,\;\;\;\forall\;x\in \tilde G_n \cap B_{4\delta \sqrt{d}}(x_0).
  \end{align}
  Then, as in the previous case, one can see there is some $x_1$ with $d(x_1,x_0)\leq  4\delta \sqrt{d}$ such that
  \begin{align*}
    q(\nabla^2 \tilde \pi_n^\beta w(x_0),x_0;x_1) & \leq -C^{-1}|(\nabla^2 \tilde \pi_n^\beta w(x_0))_-|_{g_x}|\exp_{x_0}^{-1}(x_1)|_{g_x}\\
	& = -C^{-1}|(\nabla^2 \tilde \pi_n^\beta w(x_0))_-|_{g_x}d(x_1,x_0)^2.
  \end{align*}
  Using \eqref{eqFinDimApp: positivity second order} with this $x_1$, we conclude that
  \begin{align*}
    |(\nabla^2 \tilde \pi_n^\beta w(x_0))_-|_{g_x}\leq C\|\tilde \pi_n^\beta w\|_{C^3}d(x_1,x_0) \leq C\|w\|_{C^3}\tilde h_n.
  \end{align*}
  If $\nabla \tilde \pi_n^\beta w(x_0) \neq 0$, we apply the above argument to the function
  \begin{align*}
    \tilde w = \tilde \pi_n^\beta(w,x)-l( \nabla \tilde \pi_n^\beta(w)(x_0),x_0;x), \textnormal{ defined in } B_{4\delta \sqrt{d}}(x_0).
  \end{align*}
  As explained at the end of Remark \ref{rem:FinDimApp l and q derivatives in a chart}, we always have $\nabla^2l( \nabla \tilde \pi_n^\beta(w)(x_0),x_0;x_0)=0$, thus the Hessian at $x_0$ is not perturbed by this change. Moreover, it is clear that $\|\tilde w\|_{C^\beta} \leq C\|w\|_{C^\beta}$, with a universal $C$, and the proof follows.
\end{proof}

Using Proposition \ref{prop:local minima in Gn almost gives zero gradient nonnegative Hessian}, we now show the existence of a kind of ``corrector'' to the Whitney extension, in the sense that $\tilde \pi_n^\beta w$ plus this corrector is non-negative in $M$ whenever $w$ is non-negative in $\tilde G_n$, the corrector having a $C^\beta$ norm which vanishes as $n$ goes to infinity.
\begin{lem}\label{lem: extension operators preserve order up to small error}
  Fix $\beta \in [0,3)$. Let $w \in C_b^3(M)$ and suppose that there is some $x_0 \in \tilde G_n$ such that
  \begin{align*} 
    w(x) & \geq 0, \;\forall\; x\in \tilde G_n,\\
    w(x_0) & = 0, \;x_0\in \tilde G_n.	  	   
  \end{align*}	  
  Then, there is a function $R_{\beta,n,w,x_0}$ with $R_{\beta,n,w,x_0}(x_0)=0$ and such that
  \begin{align*}
    & (\tilde \pi_n^\beta w)(x)+R_{\beta,n,w,x_0}(x)\geq 0\ \  \forall\; x\in M,\\
    & \|R_{\beta,n,w,x_0}\|_{C^{\beta}(M)} \leq Ch_n^{\gamma}\|w\|_{C^3(M)}.  
  \end{align*}	  
  Here, $\gamma := i-\beta$ for $\beta \in [i-1,i)$, $i=2,3$, while $\tilde h_n$ is as in \eqref{eqFinDimApp: tilde hn def}. 
\end{lem}

\begin{proof}
  As in previous proofs, for any $x \in M$, let $\hat x \in \tilde G_n$ be a point such that $d(x,\tilde G_n)=d(x,\hat x)$. Further, by Remark \ref{rem: interpolation preserves order if beta less than one} the lemma is trivial with $R_{\beta,n,w,x_0}\equiv 0$ in the case $\beta \in [0,1)$.
  
  \textbf{The case $\beta \in [1,2)$}. First, we must take care of the first order part of $w$ near $x_0$, by writing
  \begin{align*}
    \tilde w_n(x) = \tilde \pi_n^\beta w(x)-l(\nabla \tilde \pi_n^\beta w(x_0),x_0;x)\phi_{0}(d(x,x_0)^2)
  \end{align*}
  Where $\phi_{0}(t)$ is a smooth function which is identically equal to $1$ for $t\leq (\delta/4)^2$ and vanishes for $t>(\delta/2)^2$. Let us gather a few properties of $\tilde w_n$. First, thanks to Proposition \ref{prop: local interpolation operators} we have
  \begin{align*}
   \|\tilde w_n\|_{C^2}\leq C\|\tilde \pi_n^\beta w\|_{C^2}
  \end{align*}
  Moreover, $\tilde w_n$ has a vanishing gradient at $x_0$
  \begin{align*}
    \nabla \tilde w_n(x_0) = 0.
  \end{align*}
  Given $x\in M$, let $\hat x \in \tilde G_n$ denote some point such that $d(x,\tilde G_n)=d(x,\tilde x)$. Then, from the positivity assumption on $w$, we have $\tilde w_n(\hat x) \geq -l(\nabla \tilde \pi_n^\beta w(x_0),x_0;\hat x)\phi_0(d(x,x_0)^2)$ for any $x \in \tilde G_n$. Then, given $x \in B_{\delta/2}(x_0)$, we have
  \begin{align*}
    \tilde w_n(x) & \geq \tilde w_n(\hat x)-C\|\tilde w_n\|_{C^1}d(x,\hat x)\\
	& \geq -l(\nabla \tilde \pi_n^\beta w(x_0),x_0;\hat x)\phi_0(d(x,x_0)^2)-C \|w\|_{C^3}d(x,\hat x),\\
	& \geq -C|\nabla \pi^\beta_n w(x_0)|_{g_{x_0}}d(\hat x,x_0)\phi_0(d(x,x_0)^2)-C \|w\|_{C^3}h_n
  \end{align*}
  For such $x$, we have that $d(\hat x,x_0)\leq d(x,x_0)+d(\hat x,x) \leq \delta+h_n$, therefore
  \begin{align*}
    \tilde w_n(x) \geq -C|\nabla \pi^\beta_n w(x_0)|_{g_{x_0}}(\delta+\tilde h_n) -C \|w\|_{C^3}h_n.
  \end{align*}
  Using Proposition \ref{prop:local minima in Gn almost gives zero gradient nonnegative Hessian}, we conclude that,
  \begin{align}\label{eqFinDimApp:Almost order preserving inequality 1}
    \tilde w_n(x) \geq -C\|w\|_{C^3}h_n,\;\;\forall\;x\in M.
  \end{align}
  Next, we use that $\tilde w_n(x_0)=0$ and $\nabla \tilde w_n(x_0) = 0$, together with Proposition \ref{prop:Appendix Cbeta implies bound on Taylor remainder}, to obtain the bound  
  \begin{align}\label{eqFinDimApp:Almost order preserving inequality 2}
    \tilde w_n(x) & \geq -C\|w\|_{C^3}d(x,x_0)^2,\;\;\forall\;x\in M.
  \end{align}
  The idea is to combine these two estimates to construct the desired function, using \eqref{eqFinDimApp:Almost order preserving inequality 1} away from $x_0$, and \eqref{eqFinDimApp:Almost order preserving inequality 2} near $x_0$. We define a preliminary function $\tilde R_{\beta,n,w,x_0}$ as follows,
  \begin{align*}
    \tilde R_{\beta,n,w,x_0}(x) & := C\|w\|_{C^3}h_n \eta\left (\frac{d(x,x_0)^2}{h_n} \right ).
  \end{align*}
  Here, $\eta:\mathbb{R}_+\to\mathbb{R}$ is an auxiliary smooth, nondecreasing function such that
  \begin{align*}
    \eta(t) = t \textnormal{ in } [0,1/2],\;\;\;\eta(t) \equiv 1 \textnormal{ in } [1,\infty).
  \end{align*}
  Then, if $d(x,x_0)^2 \geq Ch_n$, from \eqref{eqFinDimApp:Almost order preserving inequality 1} we have
  \begin{align*}
    \tilde w_n +\tilde R_{\beta,n,w,x_0} \geq 0.
  \end{align*}
  On the other hand, if $d(x,x_0)^2 \leq Ch_n$ we use \eqref{eqFinDimApp:Almost order preserving inequality 2} to obtain
  \begin{align*}
    \tilde w_n(x)+\tilde R_{\beta,n,w,x_0}(x) & \geq -C\|w\|_{C^3}d(x,x_0)^2+R_{\beta,n,w,x_0}(x)\\
	  & \geq -C\|w\|_{C^3}d(x,x_0)^2+C\|w\|_{C^3}d(x,x_0)^2\\
	  & \geq 0.
  \end{align*}
  Moreover, 
  \begin{align*}
    \nabla_a \tilde R_{\beta,n,w,x_0}(x) = C\|w\|_{C^3} \eta'\left (\frac{d(x,x_0)^2}{h_n} \right ) 2d(x,x_0)\nabla_a d(x,x_0).
  \end{align*}
  Thus, if $d(x,x_0)^2 \geq h_n$, $\nabla_a R_{\beta,n,w,x_0}(x) =0$. If $d(x,x_0)^2 \leq h_n$ then
  \begin{align*}
    |\nabla_a \tilde R_{\beta,n,w,x_0}(x)-\nabla_a \tilde R_{\beta,nw,x_0}(x')| \leq C\|w\|_{C^3}d(x,x').
  \end{align*}
  This may be rewritten as,
  \begin{align*}
    \frac{|\nabla_a \tilde R_{\beta,n,w,x_0}(x)-\nabla_a \tilde R_{\beta,n,w,x_0}(x')|}{d(x,x')^{\beta-1}} \leq C\|w\|_{C^3}h_n^{2-\beta}.
  \end{align*}
  In conclusion, letting $R_{\beta,n,w,x_0} := \tilde R_{\beta,n,w,x_0}(x)-l(\nabla \tilde \pi_n^\beta w(x_0),x_0;x)\phi_0(d(x,x_0)^2)$ it follows that $\tilde \pi_n^\beta w + R_{\beta,n,w,x_0}\geq 0$ everywhere and
  \begin{align*}
    \|R_{\beta,n,w,x_0}\|_{C^{\beta}} \leq C h_n^{2-\beta} \|w\|_{C^3}.
  \end{align*}
  Thus $R_{\beta,n,w,x_0}$ as constructed has the desired properties.
 
  \textbf{The case $\beta \in [2,3)$}. This time, we must get rid of the first and second order parts of $\tilde \pi_n^\beta w$ near $x_0$. Therefore, we write
  \begin{align*}
    \tilde w_n(x) := \tilde \pi_n^\beta w(x)-\left ( l(\nabla \tilde \pi_n^\beta w(x_0),x_0;x)+q(\nabla^2 \tilde \pi_n^\beta w(x_0),x_0;x) \right ) \phi_{0}(d(x,x_0)^2).
  \end{align*}
  Where $\phi_{0}$ is the same function from the case $\beta \in [1,2)$. Then, as in the previous case we have two inequalities,
  \begin{align*}
    \tilde w_n(x) \geq -C\|w\|_{C^3}h_n,
  \end{align*}	  
  and
  \begin{align*}
    \tilde w_n(x) \geq -C\|w\|_{C^3}d(x,x_0)^3
  \end{align*}	  
  Then, we introduce the function
  \begin{align*}
    \tilde R_{\beta,n,w,x_0}(x) & := C\|w\|_{C^3}h_n \eta\left (\frac{d(x,x_0)^3}{h_n} \right ).
  \end{align*}
  where $\eta$ is the same function from the previous case. If $d(x,x_0)^3\geq h_n$ it follows that
  \begin{align*}
    \tilde w_n(x) + \tilde R_{\beta,n,w,x_0}(x)  = \tilde w_n(x) + C\|w\|_{C^3}h_n \geq 0.
  \end{align*}	  
  On the other hand, if $d(x,x_0)^3\leq h_n$, then
  \begin{align*}
    \tilde w_n(x) + \tilde R_{\beta,n,w,x_0}(x)  \geq \tilde w_n(x) + C\|w\|_{C^3}d(x,x_0)^3 \geq 0.
  \end{align*}	  
  Letting $R_{\beta,n,w,x_0} := \tilde R_{\beta,n,w,x_0}(x)-\left ( l(\nabla \tilde \pi_n^{\beta}w(x_0),x_0;x)+q(\nabla^2 \tilde \pi_n^{\beta}w(x_0),x_0;x)\right )\phi_0(d(x,x_0)^2)$ we conclude that $\tilde \pi_n^\beta w+R_{\beta,n,w,x_0}\geq 0$ in $M$. 
\end{proof}

\subsection{Remaining proofs for the case where $\beta\geq 1$}\label{subsec:proofs beta bigger than one} Here we present the proof of the more technical cases in Proposition \ref{prop:BoundaryEstimate}, Proposition \ref{prop:InteriorEstimate}, and Lemma \ref{lem:EnTnEstToUInC3}.

\begin{proof}[Proof of Proposition \ref{prop:BoundaryEstimate} for $\beta\geq 1$]

  \textbf{The case $\beta \in [1,2)$.} In this case, $f$ has the form
    \begin{align*}
      f(x) = \sum \limits_{k} \left ( u(\hat y_{n,k})+l( \nabla^1_n u (\hat y_{n,k}),\hat y_{n,k};x)\right )\phi_{n,k}(x).
    \end{align*}
    Let $x\in M\setminus \tilde G_n$ and $\hat x \in \tilde G_n$ be such that $d(x,\hat x)=d(x,\tilde G_n)$. 
    \begin{align*}
      \nabla_{a}f(x) & = \sum \limits_{k} \nabla_a \left ( l( \nabla^1_n u (\hat y_{n,k}),\hat y_{n,k};x)\right )\phi_{n,k}(x)\\
  	& \;\;\;\;+\sum \limits_{k} \left ( u(\hat y_{n,k})+l( \nabla^1_n u (\hat y_{n,k}),\hat y_{n,k};x)\right )\nabla_a \left ( \phi_{n,k}(x) \right ) \\
  & = \sum \limits_{k} \nabla_a \left ( l( \nabla^1_n u (\hat y_{n,k}),\hat y_{n,k};x)\right )\phi_{n,k}(x)+\sum \limits_{k} \left (u(\hat y_{n,k})+l( \nabla^1_n u (\hat y_{n,k}),\hat y_{n,k};x)\right )\nabla_a \left ( \phi_{n,k}(x) \right ). 	
    \end{align*}
    For $\hat x\in \tilde G_n$, we have
    \begin{align*}
      \nabla_{a}f(\hat x) = \nabla^1_nu(\hat x),
    \end{align*}
    which is not too difficult to show. Since the proof of this fact essentially follows the same argument used later on in the proof of Theorem \ref{thm:EnTnContOperatorInCbetaNorm} --in the case $\beta \in [1,2)$--, we omit the proof.
	
    Then, using \eqref{eqFinDim:Partition of Unity Differentiation Identity} in the above expression for $\nabla_a f(x)$, we see that
    \begin{align*}
      \nabla_{a}f(x)-\nabla_{a}f(\hat x)  & = \sum \limits_{k} \left ( \nabla_a \left ( l( \nabla^1_n u (\hat y_{n,k}),\hat y_{n,k};x)\right ) -\nabla_{a}f(\hat x)\right )\phi_{n,k}(x)\\
  	& \;\;\;\;+\sum \limits_{k} \left (u(\hat y_{n,k})+l( \nabla^1_n u (\hat y_{n,k}),\hat y_{n,k};x)-u(\hat x)\right )\nabla_a \left ( \phi_{n,k}(x) \right ).	
    \end{align*}
    Recall that the only non-zero terms above are those with $k\in K_x$ (defined in Lemma \ref{lem:CubesOnM}). For such $k$, thanks to Definition \ref{def:LinearAndQuadPolynomials} and Proposition \ref{prop: discrete derivatives are controlled by continuous derivatives} we have 
    \begin{align*}
      |\nabla_a \left ( l( \nabla^1_n u (\hat y_{n,k}),\hat y_{n,k};x)\right ) -\nabla_{a}f(\hat x)| & \leq |\nabla_a \left ( l( \nabla^1_n u (\hat y_{n,k}),\hat y_{n,k};x)\right )-\nabla_a \left ( l( \nabla^1_n u (\hat y_{n,k}),\hat y_{n,k};\hat x)\right )|\\
  	& \;\;\;\;+|\nabla_a \left ( l( \nabla^1_n u (\hat y_{n,k}),\hat y_{n,k};\hat x)\right )-\nabla_{a}f(\hat x)|\\
  	& \leq C\|u\|_{C^\beta}d(x,\hat x)^{\beta-1}.
    \end{align*}
    Adding these for every $k\in K_x$, and using that $\#K_x\leq N$,	
    \begin{align}
      \left | \sum \limits_{k} \left ( \nabla_a \left ( l( \nabla^1_n u (\hat y_{n,k}),\hat y_{n,k};x)\right ) -\nabla_{a}f(\hat x)\right )\phi_{n,k}(x)\right | & \leq C\|u\|_{C^\beta}d(x,\hat x)^{\beta-1}.\label{eqFinDimApp:boundary estimate beta larger than 1 first term}
    \end{align}	  	
    Let us bound the remaining terms (compare with \cite[Chp 6, Sec 2.3.2]{Stei-71}). Let $k \in K_x$, we seek a bound for the quantity
    \begin{align*}
      |u(\hat y_{n,k})+l(\nabla^1_nu(\hat y_{n,k}),\hat y_{n,k};x)-u(\hat x)|.		
    \end{align*}			
    Assume that $\hat y_{n,k}\neq \hat x$ (otherwise the quantity is zero and there is nothing to prove). By the triangle inequality, to bound this quantity it suffices to bound the sum
    \begin{align*}
      & |u(\hat y_{n,k})+l(\nabla u(\hat y_{n,k}),\hat y_{n,k};\hat x)-u(\hat x)|+|l(\nabla^1_nu(\hat y_{n,k}),\hat y_{n,k};x)-l(\nabla u(\hat y_{n,k}),\hat y_{n,k};\hat x)|.	  
    \end{align*}			
    Thanks to Proposition \ref{prop:Appendix Cbeta implies bound on Taylor remainder} in the Appendix, for each $k\in K_x$, we have the bound
    \begin{align}		
      |u(\hat y_{n,k})+l(\nabla u(\hat y_{n,k}),\hat y_{n,k};\hat x)-u(\hat x)|\leq \|u\|_{C^\beta}d(\hat x,\hat y_{n,k})^\beta.	\label{eqFinDimApp:Boundary Estimate gradient bound 1}
    \end{align}	
    At the same time, Lemma \ref{lem:InterpolatedDerivativesAreCloseToRealDerivatives}  yields $|\nabla^1_n u(\hat y_{n,k})-\nabla u(\hat y_{n,k})|_{g_x}\leq C\|u\|_{C^\beta} h_n^{\beta-1}$ (see Appendix). Then,  from the definition of the operators $l$ (see also Remark \ref{rem:FinDimApp l and q derivatives in a chart}), it follows that
    \begin{align*}	
      |l(\nabla^1_nu(\hat y_{n,k}),\hat y_{n,k};x)-l(\nabla u(\hat y_{n,k}),\hat y_{n,k};\hat x)| \leq C\|u\|_{C^\beta}h_n^{\beta-1}d(x,\hat y_{n,k}). 
    \end{align*}	
    Since $\hat y_{n,k}\neq \hat x$, we have $d(\hat x,\hat y_{n,k}) \geq \lambda h_n$, thanks to \eqref{eqFinDimApp: lambda def}. Thus $h_n^{\beta-1} \leq \lambda^{1-\beta}d(x,\hat y_{n,k})^{\beta-1}$ and we conclude there is some universal constant $C$ such that
    \begin{align}	
      |l(\nabla^1_nu(\hat y_{n,k}),\hat y_{n,k};x)-l(\nabla u(\hat y_{n,k}),\hat y_{n,k};\hat x)| \leq C\|u\|_{C^\beta}d(x,\hat y_{n,k})^{\beta}.\label{eqFinDimApp:Boundary Estimate gradient bound 2}
    \end{align}		
   Then, as argued earlier to obtain \eqref{eqFinDim:hat x and hat Ynk distance is bounded by distance to tilde Gn} (using Remark \ref{rem: Whitney local diameter of balls} once again)  we have 
   \begin{align*}
     & d(\hat x,\hat y_{n,k})\leq 16d(x,\tilde G_n),\;\;\forall\;k\in K_x,
   \end{align*}
   which trivially implies the bound $d(x,\hat y_{n,k})\leq 17d(x,\tilde G_n)$ for every $k\in K_x$. Combining this with \eqref{eqFinDimApp:Boundary Estimate gradient bound 1} and \eqref{eqFinDimApp:Boundary Estimate gradient bound 2}, we obtain the bound
    \begin{align*}
      |u(\hat y_{n,k})+l(\nabla^1_nu(\hat y_{n,k}),\hat y_{n,k};x)-u(\hat x)| \leq C\|u\|_{C^\beta}d(x,\hat x)^\beta \;\;\forall\;k\in K_x.
    \end{align*}			
    Given that $|\nabla \phi_{n,k}|_{g_x}\leq C\diam(P_{n,k}^*)^{-1}$ (Lemma \ref{lem:PartitionOfUnityGn}), the last inequality above, and the fact that $\#K_x \leq N$ (Lemma \ref{lem:CubesOnM} ), it follows that
    \begin{align}
      \left|\sum_k \left(l( \nabla^1_n u (\hat y_{n,k}),\hat y_{n,k};x)-l( \nabla^1_n u (\hat x),\hat x;x)\right) \grad_a \phi_{n,k}(x) \right| \leq C\|u\|_{C^\beta}d(x,\hat x)^{\beta-1}.\label{eqFinDimApp:boundary estimate beta larger than 1 second term}
    \end{align}	
    Combining these we conclude that
    \begin{align*}
      |\nabla_{a}f(x)-\nabla_{a}f(\hat x)  |\leq C\|u\|_{C^\beta(M)} d(x,\hat x)^{\beta-1},\;\;\forall\;x\in M\setminus \tilde G_n,
    \end{align*}
    as we wanted.
    	
  \textbf{The case $\beta \in [2,3)$}. Finally, in this case we have
  \begin{align*}
    \nabla^{2}_{ab}f(x) & = \sum\limits_{k} \nabla^{2}_{ab}\left ( l( \nabla^1_n u (\hat y_{n,k}),\hat y_{n,k};x)\right )\phi_{n,k}(x)\\
	&\;\;\;\;+\sum\limits_{k} \nabla_{a}\left ( l( \nabla^1_n u (\hat y_{n,k}),\hat y_{n,k};x)\right )\nabla_{b}\phi_{n,k}(x)+\nabla_{b}\left ( l( \nabla^1_n u (\hat y_{n,k}),\hat y_{n,k};x)\right )\nabla_{a}\phi_{n,k}(x)\\
	& \;\;\;\;+\sum\limits_{k} \left ( l( \nabla^1_n u (\hat y_{n,k}),\hat y_{n,k};x)\right )\nabla^{2}_{ab}\phi_{n,k}(x).
  \end{align*}
  The argument from this point on is entirely analogous to the one for $\beta \in [1,2)$. We only sketch the details. One uses \eqref{eqFinDim:Partition of Unity Differentiation Identity} and the above identity to write an expression $\nabla^2_{ab}f(x)-\nabla_{ab}^2f(\hat x)$. This expression is itself separated into various sums grouped according to whether a term has a factor of $\phi_{n,k}$, $\nabla_a\phi_{n,k}$, or $\nabla_{ab}\phi_{n,k}$. Then, one proceeds to use Proposition \ref{prop:Appendix Cbeta implies bound on Taylor remainder} and Lemma \ref{lem:InterpolatedDerivativesAreCloseToRealDerivatives} to obtain bounds for the various terms, in a manner analogous to the case $\beta \in [1,2)$. In conclusion, one arrives at the desired bound,
  \begin{align*}
    |\nabla^{2}_{ab}f(x)-\nabla^{2}_{ab}f(\hat x)| \leq C\|u\|_{C^\beta}d(x,\hat x)^{\beta-2}.
  \end{align*}
\end{proof}

\begin{proof}[Proof of Proposition \ref{prop:InteriorEstimate} for $\beta \geq 1$]
  We recall some of the setup, already used in the case $\beta<1$. We let $x_1,x_2 \in B_r(x)$, where $B_{4r}(x)\subset M \setminus \tilde G_n$, so that $d(x_i,\tilde G_n)\geq r$ for $i=1,2$. Let $x(t)$ denote again the geodesic going from $x_1$ to $x_2$, parametrized with arc length, so that $x(0)=x_1,x(L)=x_2$ where $L=d(x_1,x_2)$. Under these circumstances, we have
  \begin{align*}
    d(x(t),\tilde G_n) \geq r,\;\;\forall\;t\in[0,L].    	    
  \end{align*}	  
  We now consider each of the remaining cases.

  \textbf{The case $\beta \in [1,2)$.}  
  Invoking the chain rule, and Proposition \ref{prop:RegularityOfEnTnDistToGn} as done for $\beta<1$, we have
  \begin{align*}
    \left |\frac{d}{dt}\nabla_a f(x(t)) \right | \leq C\|u\|_{C^\beta}r^{\beta-2}. 
  \end{align*}
  In particular, integrating from $t=0$ to $t=L$ we have
  \begin{align*}
    |\nabla_af(x_1) -\nabla_af(x_2)|  \leq \int_0^L \left | \frac{d}{dt}\nabla_a f(x(t)) \right |\;dt & \leq Cr^{\beta-2}\|u\|_{C^\beta(M)} d(x_1,x_2).
  \end{align*}  
  Since $\beta-2<0$ and $d(x_1,x_2)\leq 2r$, it follows that $r^{\beta-2}\leq 2^{2-\beta}d(x_1,x_2)^{\beta-2}$. Then,
  \begin{align*}
    |\nabla_af(x_1) -\nabla_af(x_2)|  \leq C\|u\|_{C^\beta(M)} d(x_1,x_2)^{\beta-1}.
  \end{align*}

  \textbf{The case $\beta \in [2,3)$.} This time we use the third derivative estimate from Proposition \ref{prop:RegularityOfEnTnDistToGn}, which yields
  \begin{align*}
    \left |\frac{d}{dt}\nabla_{ab}^2 f(x(t)) \right | \leq C\|u\|_{C^\beta}r^{\beta-3}. 
  \end{align*}
  Then, 
  \begin{align*}
    |\nabla^{2}_{ab}f(x_1)-\nabla^{2}_{ab}f(x_2)| \leq \int_0^L \left | \frac{d}{dt}\nabla^{2}_{ab}f(x(t)) \right |\;dt \leq Cr^{\beta-3}\|u\|_{C^\beta(M)}d(x_1,x_2)
  \end{align*}	
 This time, since $\beta-3<0$ and $d(x_1,x_2)\leq 2r$, we have $r^{\beta-3}\leq 2^{3-\beta}d(x_1,x_2)^{\beta-3}$ and therefore
  \begin{align*}
    |\nabla^{2}_{ab}f(x_1)-\nabla^{2}_{ab}f(x_2)| \leq C\|u\|_{C^\beta(M)}d(x_1,x_2)^{\beta-2}.
  \end{align*}

\end{proof}

\begin{proof}[Proof of Lemma \ref{lem:EnTnEstToUInC3} for $\beta \geq 1$]
  \textbf{The case $\beta\in [1,2)$}. In this case we need to go further and bound the H\"older seminorm of $\nabla_a f$, for every index $a$. Observe that
  \begin{align*}
    |\nabla_{a} f(x)-\nabla_{a} u(x)| \leq |\nabla_{a} f(x)-\nabla_{a}f(\hat x)|+ |\nabla_af(\hat x)-\nabla_a u(\hat x)| + |\nabla_{a}u(\hat x) -\nabla_{a} u(x)|.   
  \end{align*}
  Evidently,
  \begin{align*}
    & |\nabla_{a} f(x)-\nabla_{a}f(\hat x)| \leq  C\|u\|_{C^2}d(x,\hat x),\\
    &  |\nabla_{a}u(\hat x) -\nabla_{a} u(x)| \leq \|u\|_{C^2}d(x,\hat x).
  \end{align*}
  Where we have used that $\|f\|_{C^2} \leq C\|u\|_{C^2}$ in the first inequality. According to Lemma \ref{lem:InterpolatedDerivativesAreCloseToRealDerivatives}, $|\nabla f(\hat x)-\nabla u(\hat x)|_{g_x}$, is bounded from above by $C\|u\|_{C^2}h_n$  (recall that $\grad \tilde \pi_n^\beta w$ and $\grad^1_n u$ agree at points in $\tilde G_n$). Since $d(x,\hat x_n)\leq h_n$, we conclude that
  \begin{align*}
    \sup \limits_{x\in M} |\nabla f(x)-\nabla u(x)|_{g_x} \leq C\|u\|_{C^2}h_n \leq C\|u\|_{C^3}h_n.
  \end{align*}	  
  The H\"older seminorm of $\nabla f(x)-\nabla u(x)$ is estimated using an argument analogous to the one used in the case $\beta \in [0,1)$. Let $x_1,x_2 \in M$, and let $\nabla_a$ be as usual. Suppose first that $d(x_1,x_2) \leq h_n$. Then, using that $\|f-u\|_{C^2} \leq \|f\|_{C^2}+\|u\|_{C^2} \leq C\|u\|_{C^2}$ (by Theorem \ref{thm:EnTnContOperatorInCbetaNorm}),
  \begin{align*}
    |\nabla_af(x_1)-\nabla_a u(x_1)-(\nabla_af(x_2)-\nabla_a u(x_2))|\leq C\|u\|_{C^2}d(x_1,x_2).
  \end{align*}
  Using that $2-\beta<0$ and $d(x_1,x_2) \leq h_n$, it follows that
  \begin{align*}
    \frac{|\nabla_af(x_1)-\nabla_a u(x_1)-(\nabla_af(x_2)-\nabla_a u(x_2))|}{d(x_1,x_2)^{\beta-1}} & \leq C\|u\|_{C^2}d(x_1,x_2)^{2-\beta}\\
      & \leq C\|u\|_{C^2}h_n^{2-\beta}.	
  \end{align*}
  Next, let us consider what happens if $x_1,x_2$ are such that $d(x_1,x_2) > h_n$. First, we note that
  \begin{align*}
    & |\nabla_af(x_1)-\nabla_a u(x_1)-(\nabla_af(x_2)-\nabla_a u(x_2))|\\
    & \leq |\nabla_af(x_1)-\nabla_a u(x_1)| +|\nabla_af(x_2)-\nabla_a u(x_2)|.	  
  \end{align*}	  
  To estimate these two terms, we decompose each of them again. We have, for $i=1,2$
  \begin{align*}
    |\nabla_af(x_i)-\nabla_a u(x_i)| \leq |\nabla_af(x_i)-\nabla_af(\hat x_i)|+ |\nabla_a f(\hat x_i)-\nabla_a u(\hat x_i)|+|\nabla_a u(\hat x_i)-\nabla_a u(x_i)|.
  \end{align*}
  Now, on one hand we have the estimates
  \begin{align*}
    |\nabla_af(x_i)-\nabla_af(\hat x_i)| & \leq C\|u\|_{C^2}d(x_i,\hat x_i),\\
    |\nabla_au(x_i)-\nabla_au(\hat x_i)| & \leq \|u\|_{C^2}d(x_i,\hat x_i),
  \end{align*}
  while on the other hand Lemma \ref{lem:InterpolatedDerivativesAreCloseToRealDerivatives} says that $|\nabla_a f(\hat x_i)-\nabla_a u(\hat x_i)| \leq C\|u\|_{C^2}d(x_i,\hat x_i)$. Gathering these bounds and using that $d(x_1,x_2)>h_n \geq d(x_i,\hat x_i)$, we conclude that
  \begin{align*}
    |\nabla_af(x_1)-\nabla_a u(x_1)-(\nabla_af(x_2)-\nabla_a u(x_2))| & \leq C\|u\|_{C^2}h_n.
  \end{align*}  
  Then, since $\beta \in [1,2)$, 
  \begin{align*}
    \frac{|\nabla_af(x_1)-\nabla_a u(x_1)-(\nabla_af(x_2)-\nabla_a u(x_2))|}{d(x_1,x_2)^{\beta-1}} & \leq C\|u\|_{C^2}h_n^{2-\beta}.
  \end{align*}  
  In conclusion, for $x_1,x_2\in M$ with $x_1\neq x_2$ we have
  \begin{align*}
    & \frac{|\nabla_{a} f(x_1)-\nabla_{a}u(x_1)-(\nabla_{a}f(x_2)-\nabla_{a}f(x_2))|}{d(x_1,x_2)^{\beta-1}} \leq C\|u\|_{C^2}h_n^{2-\beta}.
  \end{align*}
  Therefore, as in the case $\beta \in [0,1)$, we conclude that
  \begin{align*}
    \|f-u\|_{C^\beta} = \|f-u\|_{L^\infty}+\|\nabla f-\nabla u\|_{L^\infty}+[\nabla f-\nabla u]_{C^{\beta-1}} \leq C\|u\|_{C^2}h_n^{2-\beta}, 	  
  \end{align*}	  
  proving the estimate in this case.
  
  \noindent \textbf{The case $\beta \in [2,3)$}. In this case we must also take into account the values of $\nabla^2f$. Similarly as in the previous cases, we use a triangle inequality to estimate the difference $\nabla^2 f(x)-\nabla^2 u$. Let $a,b$ be indices in one of the usual exponential system of coordinates, then
  \begin{align*}
    |\nabla_{ab}^2 f(x)-\nabla_{ab}^2 u(x)|\leq |\nabla_{ab}^2 f(x)-\nabla_{ab}^2 f(\hat x)|+|\nabla_{ab}^2 f(\hat x)-\nabla_{ab}^2 u(\hat x)|+|\nabla_{ab}^2 u(x)-\nabla_{ab}^2 u(\hat x)|.
  \end{align*}
  On the other hand, we have, from Theorem \ref{thm:EnTnContOperatorInCbetaNorm}
  \begin{align*}
    & |\nabla_{ab}^2 f(x)-\nabla_{ab}^2 f(\hat x)| \leq  C\|u\|_{C^3}d(x,\hat x),\\
    &  |\nabla_{ab}^2 u(\hat x) -\nabla_{ab}^2 u(x)| \leq \|u\|_{C^3}d(x,\hat x),
  \end{align*}
  and, again from Lemma \ref{lem:InterpolatedDerivativesAreCloseToRealDerivatives} in the Appendix, 
  \begin{align*}
    |\nabla_{ab}^2 f(\hat x)-\nabla_{ab}^2 u(\hat x)|\leq Ch_n\|u\|_{C^3}.
  \end{align*}  
  Combining these inequalities, we conclude that
  \begin{align*}
    & \sup |\nabla_{ab}^2 f(x)-\nabla_{ab}^2 u(x)|\leq C\|u\|_{C^3}h_n.
  \end{align*}
  Now we consider the H\"older seminorm. Fix $x_1,x_2\in M$. As before, consider first the case where $d(x_1,x_2)\leq h_n$, in which case it is clear that
  \begin{align*}
    |\nabla_{ab} f(x_1)-\nabla_{ab} u(x_1)-(\nabla_{ab} f(x_2)-\nabla_{ab} u(x_2))| \leq \|u\|_{C^3}d(x_1,x_2).
  \end{align*} 	  
  Then, using that $3-\beta<0$, and that $d(x_1,x_2) \leq h_n$, it follows that 
  \begin{align*}
    \frac{|\nabla_{ab} f(x_1)-\nabla_{ab} u(x_1)-(\nabla_{ab} f(x_2)-\nabla_{ab} u(x_2))|}{d(x_1,x_2)^{\beta-2}}  \leq C\|u\|_{C^3}d(x_1,x_2)^{3-\beta}\leq C\|u\|_{C^3}h_n^{3-\beta}.	 		
  \end{align*} 	  
  Let us now take the opposite case, that is when $d(x_1,x_2)>h_n$. Then
  \begin{align*}
    & |\nabla_{ab} f(x_1)-\nabla_{ab} u(x_1)-(\nabla_{ab} f(x_2)-\nabla_{ab} u(x_2))|\\
    & \leq |\nabla_{ab} f(x_1)-\nabla_{ab} u(x_1)|+|\nabla_{ab} f(x_2)-\nabla_{ab} u(x_2)| 
  \end{align*} 	  
  As before, 
  \begin{align*}
    &  |\nabla_{ab} f(x_i)-\nabla_{ab} u(x_i)| \leq |\nabla_{ab} f(x_i)-\nabla_{ab} f(\hat x_i)|+|\nabla_{ab} f(\hat x_i)-\nabla_{ab} u(\hat x_i)|+|\nabla_{ab} u(x_i)-\nabla_{ab} u(\hat x_i)| 
  \end{align*} 	  
  Next, we have 
  \begin{align*}
    |\nabla_{ab} f(x_i)-\nabla_{ab} f(\hat x_i)| & \leq C\|u\|_{C^3}d(x_i,\hat x_i),\\
    |\nabla_{ab} u(x_i)-\nabla_{ab} u(\hat x_i)| & \leq \|u\|_{C^3}d(x_i,\hat x_i).
  \end{align*}
  These inequalities, together with the bound $|\nabla_{ab} f(\hat x_i)-\nabla_{ab} u(\hat x_i)|\leq C\|u\|_{C^3}h_n$ from Lemma \ref{lem:InterpolatedDerivativesAreCloseToRealDerivatives} in the Appendix, yield
  \begin{align*}
    |\nabla_{ab} f(x_1)-\nabla_{ab} u(x_1)-(\nabla_{ab} f(x_2)-\nabla_{ab} u(x_2))| \leq C\|u\|_{C^3}h_n.	  
  \end{align*}
  Using that $d(x_1,x_2)>h_n$, we see that in this case
  \begin{align*}
    \frac{|\nabla_{ab} f(x_1)-\nabla_{ab} u(x_1)-(\nabla_{ab} f(x_2)-\nabla_{ab} u(x_2))|}{d(x_1,x_2)^{\beta-2}} \leq C\|u\|_{C^3}h_n^{3-\beta}.	  
  \end{align*}
  The rest of the proof is entirely analogous to the previous case, and the Lemma is proved.
\end{proof}


\section{The Min-max formula in infinite dimensions: Functions on $(M,g)$}\label{sec: Min-Max formula for M,g}

This section has two goals: defining a ``finite dimensional'' approximation to $I$; and showing that the approximation can be used, along with Section \ref{sec: Min-Max finite dimension}, to prove Theorem \ref{thm: Min-Max Riemannian Manifold}.  First we develop the approximation, and second we establish Theorem \ref{thm: Min-Max Riemannian Manifold}.

\subsection{Approximations to $I$ and their structure}\label{sec:ApproximationsIn}

We are now ready to introduce the finite dimensional approximations to the Lipschitz map $I: C^{\beta}_b(M) \to C_b(M)$. Recall that in Definition \ref{def:WhitneyExt} we introduced the restriction and extension operators $\tilde T_n$ and $\tilde E_n^\beta$, below we introduce slight modifications of these operators, which have the advantage that they depend only on the values of $u$ over $G_n$, and not all of $\tilde G_n$.

\begin{DEF}\label{def:WhitneyExtFinite}
  For each $n$, we define
  \begin{enumerate}	
    \item  The restriction operator $T_n:C^\beta_b(M) \to C(G_n)$, defined by
    \begin{align*}	
      T_n(u,x) := u(x)\ \forall\;x\in G_n. 	
    \end{align*}	
    \item The extension operator of order $\beta$, $E_n^\beta:C(G_n) \to C^\beta_b(G_n)$, defined by
    \begin{align*}
      E_n^\beta(u,x) := E_n^\beta(\tilde u,x)		
    \end{align*}				
    where $E_n^\beta$ is the extension operator from Definition \ref{def:WhitneyExt}, and $\tilde u \in C(G_n)$ denotes the function which agrees with $u$ in $G_n$ and is defined to be zero in $\tilde G_n \setminus G_n$.
    \item Again, we have a projection map, which we denote $ \pi_n^\beta$, and is defined by $\pi_n^\beta := E_n^\beta \circ T_n$.
  \end{enumerate}	
\end{DEF}

From Remark \ref{rem:extension preserves compact support}, it becomes clear that, if $u \in C^\beta_b(M)$ has compact support, then if $n$ is large enough, then $E_n^\beta \circ T_n = \tilde E_n^\beta \circ \tilde T_n u$. Therefore
\begin{align}\label{eqMMonM:ProjectionsAgree CompactlySupported Functions}
  u\in C^\beta_c(M) \Rightarrow \tilde \pi_n^\beta(u,x) = \pi_n^\beta(u,x)\;\;\forall\;x\in M,\;\;\textnormal{ for all large enough } n.
\end{align}
Using this, we can use the apply the results about $\tilde \pi_n^\beta$ from Section \ref{sec: finite dimensional approximations} to $\pi_n^\beta$ when dealing with functions supported in some compact set $K$ and $n$ large enough (depending on $K$).

We will create two approximations, which we call $i_n$ and $I_n$.  The distinction is that $i_n$ is legitimately defined on the finite dimensional space, $C(G_n)$, whereas $I_n$ will be defined on $C^\beta_b(M)$, but is finite dimensional in the sense that it returns the same value for any two functions that agree on $G_n$.  Introducing $I_n$ will be important so that both $I_n$ and $I$ have the same domain and co-domain. 

To this end, we let $T_n, E_n^\beta$, and $\pi_n^\beta$ from Definition \ref{def:WhitneyExtFinite}, and now we define
\begin{align*}
  I_n:C_b^{\beta}(M)\to C_b(M),\ \    I_n := \pi_n^{0} \circ I \circ \pi_n^{\beta}, 
\end{align*}
that is to say 
\begin{align}\label{eqMMonM:InDef}
  I_n(u,x) := E_n^{0}T_n I(E^{\beta}_nT_nu,x) .
\end{align}
The approximations $I_n$ will be seen to well approximate $I$ on a set that is dense with respect to local uniform convergence in $C^{\beta}_b(M)$ (as opposed to norm convergence).

\begin{DEF}\label{def:PiNandXbeta} 
  Define, for $\beta\in[0,2]$, the finite dimensional subspace $X^\beta_n\subset C^\beta_b(M)$ by
  \begin{align*}
    X_n^\beta := E_n^{\beta}(C(G_n)).	
  \end{align*}
\end{DEF}

\begin{prop}[Convergence of $I_n$ on $C^3_c(M)$]\label{prop:InConvergesUniformlyToI}
  With $I_n$ defined in \eqref{eqMMonM:InDef} and for every compact $K\subset M$ and any $R>0$, we have
  \begin{align*}
    \lim\limits_{n\to\infty}\sup \limits_{\|u\|_{C^{3}_c(K)}\leq R} \|I_nu- Iu\|_{L^\infty(M)} = 0.
  \end{align*}	  	
\end{prop}

\begin{proof}[Proof of Proposition  \ref{prop:InConvergesUniformlyToI}]
  First of all, let us recall that that $\pi_n^\beta u = E_n^\beta T_n u$.  Then, we have
  \begin{align*}  
    \|I(u)-I(\pi_n^\beta u)\|_{L^\infty(M)} \leq \norm{I}_{\textnormal{Lip}(C^{\beta}_b,C)}\|u-\pi_n^\beta u\|_{C^\beta(M)}.
  \end{align*}
  On the other hand, successive applications of Theorem \ref{thm:EnTnContOperatorInCbetaNorm} and the linearity of $\pi_n^0$ imply that
  \begin{align*}
    \|\pi_n^0I(u)-\pi_n^0 I(\pi_n^\beta u)\|_{L^\infty(M)} & \leq C\|I(u)-I(\pi_n^\beta u)\|_{L^\infty(M)}\\
	& \leq C\norm{I}_{\textnormal{Lip}(C^{\beta}_b,C)}\|u-\pi_n^\beta u\|_{C^\beta(M)}.
  \end{align*}	
  It follows that
  \begin{align*}
    &\norm{I_nu - Iu}_{L^\infty(M)} \ \ \leq  C\norm{I}_{\textnormal{Lip}(C^{\beta}_b,C)}\|u-\pi_n^\beta u\|_{C^\beta(M)}.
  \end{align*}
  At this point, we can apply Lemma \ref{lem:EnTnEstToUInC3} to the right hand side of the last inequality (using \eqref{eqMMonM:ProjectionsAgree CompactlySupported Functions}), and we see that for sufficiently large $n$,
	\begin{align*}
		&\norm{I_nu - Iu}_{L^\infty(M)} \leq C\norm{I}_{\textnormal{Lip}(C^{\beta}_b,C)}\tilde h_n^\gam\norm{u}_{C^3_b(M)}
	\end{align*}
	where $\tilde h_n$ is as defined in \eqref{eqFinDimApp: tilde hn def} and $\gamma$ is as in Lemma \ref{lem:EnTnEstToUInC3}.  It follows that for all large $n$, and all $u\in C^3_b(M)$ which are compactly supported in $K$, we have
	\begin{align*}
	  \|I_nu-I_u\|_{L^\infty(M)} \leq C\tilde h_n^\gamma \norm{I}_{\textnormal{Lip}(C^{\beta}_b,C)}R,
        \end{align*}		
        and the Proposition is proved.		
\end{proof}

For each $n$, the map $I_n$ may be thought of as a finite dimensional approximation to $I$ in the following sense. We define the map
\begin{align}\label{eqMMonM:LittleInDef}
  i_n : C(G_n) \to C(G_n),\;\;i_n := T_n \circ I \circ E^\beta_n.
\end{align}	
Thus, $I_n$ and $i_n$ are related by
\begin{align*}
  I_n = E_n^{0} \circ i_n \circ T_n.
\end{align*}
In particular, this shows that although $I_n:C^\beta_b(M)\to C_b(M)$, $I_n$ is uniquely determined by its values on functions in $X_n^\beta$, and functions in $X_n^\beta$ are uniquely determined by their values on $G_n$.

\begin{rem}
	As suggested by the results in Section \ref{sec:WhitneyAlmostOrderPreserving} in particular Lemma \ref{lem: extension operators preserve order up to small error}, except for when $\beta\in (0,1)$, it is not expected that $i_n$ or $I_n$ will enjoy the GCP.  However, this is not a set-back because the GCP is recovered in the limit as $n\to\infty$.  The potential failure of the GCP originates with the composition by $E_n^\beta$, and the latter operator may not be order preserving when $\beta\geq 1$.
\end{rem}

\begin{lem}\label{lem:inAndInNormBounds}
	There is a universal constant, $C$, so that $\norm{i_n}_{Lip(C(G_n),C(G_n))}\leq C\norm{I}_{Lip(C^\beta_b,C_b)}$ and $\norm{I_n}_{Lip(C^\beta_b,C_b)}\leq C\norm{I}_{Lip(C^\beta_b,C_b)}$.
\end{lem}

\begin{proof}[Comments on Lemma \ref{lem:inAndInNormBounds}]
	This is a straightforward consequence of the bound in Theorem \ref{thm:EnTnContOperatorInCbetaNorm}, that $\norm{E_n^\beta T_n u}_{C^\beta}\leq C\norm{u}_{C^\beta}$, and the definitions of both $i_n$ and $I_n$.
\end{proof}

The advantage of this presentation is that we may now use the results from Section \ref{sec: Min-Max finite dimension} to obtain a min-max formula for $I_n$, via the theory applied to $i_n$.  First, we make an observation that relates the differentiability properties of $i_n$ and $I_n$.

\begin{lem}\label{lem:DiffOfLittleInAndBigIn}
	Assume that $u\in X^\beta_n(M)$ and $u_n=T_nu$.  The map, $i_n$, is Fr\`echet differentiable at $u_n$ if and only if $I_n$ is Fr\`echet differentiable at $u=E_n^\beta u_n$.  Furthermore,
	\begin{align*}
		DI_n|_u = E_n^0\circ Di_n|_{u_n}\circ T_n.
	\end{align*}
\end{lem}

\begin{proof}[Comments on Lemma \ref{lem:DiffOfLittleInAndBigIn}]
	This is a straightforward consequence of the uniqueness of $DI_n$ and $Di_n$ as well as the chain rule.  We omit the details.
\end{proof}

We define the analog of the Clarke differential for $i_n$ in the context of $I_n$.

\begin{DEF}\label{def:DInDefOfDifferential}
	The differential, $\D I_n$, is defined as
    \begin{align*}
      \mathcal{D}I_n= \hull \big\{ L \in \mathcal{L}(C^{\beta}_b,C_b)\  &:\  \exists \{u^n_k\}_k \textnormal{ s.t. } DI_n|_{u^n_k} \textnormal{ exists }\;\forall\;k \textnormal{ and }\\ 
  	&\ \ \lim\limits_{k\to \infty} DI_n|_{u^n_k}(f,x) = L(f,x)\ \forall f\in C^\beta_b, \forall\; x\in G_n \big\}. 	  
    \end{align*}
\end{DEF}

An immediate corollary of this definition and Lemma \ref{lem:DiffOfLittleInAndBigIn} is
\begin{cor}\label{cor:RelateDifferentialSetOfLittleAndBigIn}
	Composition by $E_n^0$ and $T_n$ over $\D i_n$ gives $\D I_n$:
	\begin{align*}
		\D I_n = \left\{ E_n^0 l T_n\ :\ l \in\D i_n     \right\}.
	\end{align*}
\end{cor}

\begin{lem}\label{lem:InMinMax}
  For each $n$, the map $I_n: C^{\beta}_b(M) \to C_b(M)$ admits a min-max formula when evaluated over the set $G_n$; i.e. 
  \begin{align}\label{eqMMonM:InMinMax}
    \forall u\in C^\beta_b(M),\ \forall\; x\in G_n,\ \ I_n(u,x) = \min\limits_{v \in C^\beta_b(M)}\max\limits_{L \in \mathcal{D}I_n} \{ I_n(v,x)+L(u-v,x) \}.	  
  \end{align}	  
  Here $\D I_n$ is as in Definition \ref{def:DInDefOfDifferential}.
\end{lem}

\begin{proof}[Proof of Lemma \ref{lem:InMinMax}]
	To begin the proof, we make a few simple but useful observations about the range of $i_n$.  First,
	\begin{align*}
		v_n\in C(G_n)\ \iff v_n=T_n v\ \text{for some}\ v\in C^{\beta}_b(M),
	\end{align*}
	and second
	\begin{align*}
		\left\{ T_nIE_n^\beta v_n\ :\ v_n\in C(G_n)    \right\}
		 = \left\{ T_nIE_n^\beta T_nv\ :\ v\in C^{\beta}_b(M)  \right\}.
	\end{align*}
	Applying Lemma \ref{lem:FiniteDimensionalMinMax} to $i_n$, we see that for all $u_n\in C(G_n)$ and all $x\in G_n$,
    \begin{align*}
      i_n(u_n,x) &= \min\limits_{v_n \in C(G_n)}\max\limits_{L_n \in \mathcal{D}i_n} \{ i_n(v_n,x) + L_n(u_n-v_n,x) \}\\
	  &= \min\limits_{v_n \in C(G_n)}\max\limits_{L_n \in \mathcal{D}i_n} \{ T_nIE_n^\beta(v_n,x) + L_n(u_n-v_n,x) \}\\
	  &= \min\limits_{v \in C^\beta_b(M)}\max\limits_{L_n \in \mathcal{D}i_n} \{ T_nIE_n^\beta T_n(v,x) + L_n(u_n-T_n v,x) \}.
    \end{align*}
	Thus, replacing $u_n$ by $T_n u$, we see that for all $u\in C^\beta_b$ and $x\in G_n$,
	\begin{align*}
		i_n(T_n u,x) = \min\limits_{v \in C^\beta_b(M)}\max\limits_{L \in \mathcal{D}i_n} \{ T_nIE_n^\beta T_n(v,x) + L(T_n(u-v),x) \}.
	\end{align*}
	This shows that for all $u,v\in C^\beta_b(M)$ and for all $x\in G_n$, the inequality:
	\begin{align*}
		i_n(T_nu,x)\leq i_n(T_nv,x) + \max_{L_n\in \D i_n}\{ L_n(T_n(u-v),x) \};
	\end{align*}
	and unraveling the notation for $i_n$, we see that
	\begin{align*}
		T_nIE_n^\beta T_n(u,x)\leq T_nIE_n^\beta T_n(v,x) + \max_{ L_n\in \D i_n}\{ L_nT_n((u-v),x) \}.
	\end{align*}
	Thanks to the fact that $E_n^0$ is monotone and linear, as well as Corollary \ref{cor:RelateDifferentialSetOfLittleAndBigIn}, we have
	\begin{align*}
		E_n^0T_nIE_n^\beta T_n(u,x)&\leq E_n^0T_nIE_n^\beta T_n(v,x) + E_n^0\max_{ L_n\in \D i_n}\{ L_nT_n((u-v),x) \}\\
		&\leq E_n^0T_nIE_n^\beta T_n(v,x) + \max_{ L_n\in \D i_n}\{ E_n^0L_nT_n((u-v),x) \}\\
		&=E_n^0T_nIE_n^\beta T_n(v,x) + \max_{ \tilde L_n\in \D I_n}\{ \tilde L_n((u-v),x) \},
	\end{align*}
	Where we note in the middle inequality that if $L_n^{(x)}$ is a collection that point-by-point attains the max, then by Corollary \ref{cor:RelateDifferentialSetOfLittleAndBigIn}, $E_n^0L_n^{(x)}$ is an admissible family in $\D I_n$.
	Thus, by definition of $I_n$, we see that for all $u,v\in C^\beta_b(M)$ and all $x\in G_n$,
	\begin{align*}
		I_n(u,x)\leq I_n(v,x) + \max_{ \tilde L_n\in \D I_n}\{ \tilde L_n((u-v),x) \}.
	\end{align*}
	Taking a min over $v\in C^\beta_b(M)$, we have achieved \eqref{eqMMonM:InMinMax} for all $x\in G_n$.
\end{proof}

\begin{lem}\label{lem:DInOperatorNormBound}
	If $L_n\in \D I_n$, then 
	$\displaystyle \norm{L_n}_{C^\beta_b\to C_b}\leq C\norm{I}_{C^\beta_b\to C_b}$.
\end{lem}

\begin{proof}[Proof of Lemma \ref{lem:DInOperatorNormBound}]
	First, assume that $u\in C^\beta_b(M)$ and that $I_n$ is Fr\'echet differentiable at $u$.  Let $\phi\in C^\beta_b(M)$ and $\psi\in C^\beta_b(M)$, and let $t>0$.
	\begin{align*}
		\norm{\frac{I(u+t\phi)-I(u)}{t}-\frac{I(u+t\psi)-I(u)}{t}}_{L^\infty(M)} &= \norm{ \frac{I(u+t\phi)-I(u+t\psi)}{t} }_{L^\infty(M)}\\
		&\leq \frac{1}{t}\norm{I_n}_{Lip(C^\beta_b,C_b)}\norm{t(\phi-\psi)}_{C^\beta(M)}\\
		&\leq C\norm{I}_{Lip(C^\beta_b,C_b)}\norm{\phi-\psi}_{C^\beta(M)}.
	\end{align*}
	Letting $t\to 0$ establishes the bound for $DI_n(u)$.
	We also note that in a Banach space, norm bounds are closed under convex combinations and weak limits, hence they also hold for $\D I_n$.
\end{proof}

It will be useful to know that the assumption \eqref{eqIntro:AssumptionQuatifiedLocalUnifConv1} is also obeyed by the operators $I_n$ uniformly in $n$.  This is indeed the case up to a slight enlargement factor, which is due to the result of the finite range of dependence of the operators $E_n^\beta T_n$, proved in Lemma \ref{lem:EnTnRangeOfDependence}.

\begin{lem}\label{lem:InSatisfiesExtraAssumptions}
  There is a universal constant, $C$, such that for $\om$ as in assumption \eqref{eqIntro:AssumptionQuatifiedLocalUnifConv1}, $I_n$ inherits a slightly modified version of \eqref{eqIntro:AssumptionQuatifiedLocalUnifConv1} in the form of
    \begin{align}\label{eqMMonM:AssumptionQuatifiedLocalUnifConv1MODIFIED}
  	  \forall u,v\in C^\beta_b,\ \ \norm{I_n(u)-I_n(v)}_{L^\infty(B_r)}
  	  \leq C\norm{u-v}_{C^\beta(\overline{B_{2r+3}})} + C\om(r)\norm{u-v}_{L^\infty(M)},
    \end{align}
\end{lem}

\begin{proof}[Proof of Lemma \ref{lem:InSatisfiesExtraAssumptions}]
This is immediate from two applications of Lemma \ref{lem:EnTnRangeOfDependence}, combined with the assumptions \eqref{eqIntro:AssumptionQuatifiedLocalUnifConv1}.
\end{proof}

\begin{lem}\label{lem:DInSatisfiesExtraAssumptions}
	Up to a uniform constant, any $L_n\in \D I_n$ also inherits the properties of Lemma \ref{lem:InSatisfiesExtraAssumptions}. 
\end{lem}

\begin{proof}[Comments on Lemma \ref{lem:DInSatisfiesExtraAssumptions}]
	This follows in a similar way as the proof of Lemma \ref{lem:InSatisfiesExtraAssumptions}, combined with the observations of the proof of Lemma \ref{lem:DInOperatorNormBound}.
\end{proof}

\begin{lem}\label{lem:In has an approximate GCP}
  Let $L \in \mathcal{D}I_n$. Suppose that $w \in C^3_b(M)$ is nonnegative and $w(x_0)=0$, $x_0\in G_n$. Then
  \begin{align*}
    L(w,x_0) \geq -C h_n^\gam \|w\|_{C^3(M)}	  
  \end{align*}	  
  where $\lim h_n=0$ and $h_n$ is defined in \eqref{eqFinDimApp: hn def}.
\end{lem}

\begin{proof}[Proof of Lemma \ref{lem:In has an approximate GCP}]
  Since the lower bound for $L$ is preserved under convex combinations and limits, then, given the definition of $\mathcal{D}I_n$ it is clear that it suffices to prove the inequality when $L$ is the classical derivative of $I_n$ at points of differentiability for $I_n$. To this end, let us fix $u \in C^\beta_b$, an arbitrary point of differentiability of $I_n$, and let $L_u$ denote the respective derivative. 
  
  We apply Lemma \ref{lem: extension operators preserve order up to small error} to $w$, to obtain the remainder polynomial, $R_{\beta,n,m,x_0}$ and conclude that for any $t>0$ we have
  \begin{align*}
    u+tw+tR_{\beta,n,w,x_0} \geq u\;\;\; \forall\;x \in M,
  \end{align*}	  
  with equality at $x=x_0$ (recall that $R_{\beta,n,w,x_0}(x_0)=0$). Since $I$ has the global comparison property, it follows that
  \begin{align*}
     I(u+tw+tR_{\beta, n,w,x_0},x_0) \geq I(u,x_0)\;\;\forall\;t>0.
  \end{align*}
  Furthermore, since $I$ is a Lipschitz map,
  \begin{align*}
    I(u+tw,x_0) & \geq I(u+tw+tR_{\beta,n,w,x_0},x_0)-tC\|R_{\beta,n,w,x_0}\|_{C^\beta}\\
	    & \geq I(u,x_0)-tC\|R_{\beta,n,w,x_0}\|_{C^\beta}.	  
  \end{align*}	
  It follows that     
  \begin{align*}
    L_u(w,x_0) = \frac{d}{dt}_{t=0^+}I(u+tw,x_0) \geq -C \|R_{\beta,n,w,x_0}\|_{C^\beta}  
  \end{align*}	  
  Since $w \in C^3_c(M)$, Lemma \ref{lem: extension operators preserve order up to small error} also says that
  \begin{align*}
    \|R_{\beta,n,w,x_0}\|_{C^\beta} \leq C h_n^\gam \|w\|_{C^3}
  \end{align*}
  Thus,
  \begin{align*}
    L_u(w,x_0) \geq -C h_n^\gam \|w\|_{C^3}  
  \end{align*}	  
  This holds for every $u$ where $I_n$ is differentiable. Therefore, by the Definition \ref{def:DInDefOfDifferential} of $\D I_n$ in  it also holds for any $L \in \mathcal{D}I_n$.
\end{proof}

\subsection{Some nice properties of $I$, $I_n$, and $\pi_n$}\label{sec:NicePropertiesIInPin}

Here we will collect some useful observations about $I$, $I_n$, and $\pi_n^\beta$.  They seem to be useful in their own right, and we hope they will appear elsewhere, but they are also essential for extracting limits of operators in $\D I_n$, and so we mention them here.

For the remainder of this section, we will use many times a function $\rho$, which is simply a smooth function that behaves like $t\mapsto \min\{t,1\}$.  We define it below.

\begin{DEF}\label{def:RhoTheSmoothApproxOfMin(1,t)}
	Let $\rho$ be fixed from here until the end of this section as a function that satisfies 
	\begin{align*}
		\rho(s) = s\ \forall s\in [0,1),\ \ \rho(s)\equiv 3/2\ \forall s\in[2,\infty),\ \text{and}\ \abs{\rho'} + \abs{\rho''}\leq 4.
	\end{align*}
\end{DEF}

\begin{lem}\label{lem:NormEstimateLocalizedToNearAndFarTermsWithTestFunction}
	Let $x\in M$, and let $\phi\in C^\beta_b(M)$ be any function such that $0\leq\phi\leq 1$ and $\phi(x)=0$.  Then for any $u,v\in C^\beta_b(M)$,
	\begin{align}\label{eqMMonM:NormEstimateOnlyLinfty}
		\abs{I(\phi u,x)-I(\phi v,x)}\leq 
		(\norm{I}_{Lip(C^\beta_b,C_b)}\cdot \norm{\phi}_{C^\beta(M)})\cdot \norm{u-v}_{L^\infty(\textnormal{spt}(\phi))},
	\end{align}
	as well as
	\begin{align}\label{eqMMonM:NormEstimateCbetaAndLinfty}
		\abs{I(u,x)-I(v,x)}
		\leq \norm{I}_{Lip(C^\beta_b,C_b)}
		\left(
		\norm{(1-\phi)(u-v)}_{C^\beta(M)}
		+\norm{\phi}_{C^\beta(M)}\cdot\norm{u-v}_{L^\infty(\textnormal{spt}(\phi))}
		\right).
	\end{align}
\end{lem}

\begin{proof}[Proof of Lemma \ref{lem:NormEstimateLocalizedToNearAndFarTermsWithTestFunction}]
	First we establish \eqref{eqMMonM:NormEstimateOnlyLinfty}.  Note that for all $y\in \text{spt}(\phi)$,
	\begin{align*}
		u(y)-v(y)\leq \norm{u-v}_{L^\infty(\text{spt}(\phi))},
	\end{align*}
	 and so for all $y\in M$, we also have 
	\begin{align*}
		\phi(y)u(y)-\phi(y)v(y)\leq \phi(y)\norm{u-v}_{L^\infty(\text{spt}(\phi))}.
	\end{align*}
	This says that the function $\phi v + \phi\norm{u-v}_{L^\infty(\text{spt}(\phi))}$ touches $\phi u$ from above at any $x$ such that $\phi(x)=0$.  By the GCP, we have
	\begin{align*}
		I(\phi u,x)\leq I(\phi v + \phi\norm{u-v}_{L^\infty(\text{spt}(\phi))},x),
	\end{align*}
	so that
	\begin{align*}
		I(\phi u,x) - I(\phi v,x)&\leq I(\phi v + \phi\norm{u-v}_{L^\infty(\text{spt}(\phi))},x) - I(\phi v,x)\\
		&\leq \norm{I}_{Lip(C^\beta_b,C_b)} \cdot \norm{\left(\phi\norm{u-v}_{L^\infty(\text{spt}(\phi))}\right)}_{C^\beta(M)}\\
		&= \norm{I}_{Lip(C^\beta_b,C_b)} \cdot \norm{\phi}_{C^\beta(M)}\cdot \norm{u-v}_{L^\infty(\text{spt}(\phi))}.
	\end{align*}
	The proof of \eqref{eqMMonM:NormEstimateCbetaAndLinfty} is similar, working with the inequality
	\begin{align*}
		(1-\phi)u+\phi u - \phi v
		\leq (1-\phi)u + \phi\norm{u-v}_{L^\infty(\text{spt}(\phi))},
	\end{align*}
	which becomes an equality at any $x$ such that $\phi(x)=0$.  Thus, the GCP gives
	\begin{align*}
		I((1-\phi)u+\phi u,x)
		\leq I((1-\phi)u + \phi v + \phi\norm{u-v}_{L^\infty(\text{spt}(\phi))},x),
	\end{align*}
	and after subtracting from both sides, we have
	\begin{align*}
		&I((1-\phi)u+\phi u,x) - I((1-\phi)v+\phi v,x)\\
		&\leq I((1-\phi)u + \phi v + \phi\norm{u-v}_{L^\infty(\text{spt}(\phi))},x)
		- I((1-\phi)v+\phi v,x)\\
		&\leq \norm{I}_{Lip(C^\beta_b,C_b)} \cdot \norm{\left((1-\phi)(u-v)+\phi\norm{u-v}_{L^\infty(\text{spt}(\phi))}\right)}_{C^\beta(M)}\\
		&\leq \norm{I}_{Lip(C^\beta_b,C_b)} \cdot\left( \norm{(1-\phi)(u-v)}_{C^\beta(M)} + \norm{\phi}_{C^\beta(M)}\cdot \norm{u-v}_{L^\infty(\text{spt}(\phi))} \right).
	\end{align*}
\end{proof}

In particular, using the results and proof of Lemma \ref{lem:NormEstimateLocalizedToNearAndFarTermsWithTestFunction}, after choosing an appropriate $\phi$ to approximate $B_R(x)$, we have as a corollary,

\begin{cor}\label{cor:NormEstimateLocalizedToNearAndFarTerms}
	Given and $R>0$, there exists a constant, $C(R)$, depending only on dimension such that for any $x$ fixed, $r>0$,
	\begin{align*}
		\norm{I(u)-I(v)}_{L^\infty(B_R)}
		\leq C(R)\norm{I}_{Lip(C^\beta_b,C_b)}
		\left(
		\norm{(u-v)}_{C^\beta(B_{R+1})}
		+\norm{u-v}_{L^\infty((B_R)^C)}
		\right),
	\end{align*}
	as well as $C(R,r)$ which blows up as $R$, $r$ are both small,
	\begin{align*}
		\abs{I(u,x)-I(v,x)}
		\leq \norm{I}_{Lip(C^\beta_b,C_b)}
		\left(
		C(R)\norm{(u-v)}_{C^\beta(B_{R+r}(x))}
		+C(R,r)\norm{u-v}_{L^\infty((B_R(x))^C)}
		\right),
	\end{align*}
	where $\om(r)\to0$ as $r\to\infty$ and comes from the limit in the extra assumption. 
	
\end{cor}

\begin{proof}[Sketch of the proof of Corollary ]
	
	We just comment that this follows by making an appropriate choice of test functions in Lemma \ref{lem:NormEstimateLocalizedToNearAndFarTermsWithTestFunction}.
\end{proof}

A very useful estimate, somewhat related to Corollary \ref{cor:NormEstimateLocalizedToNearAndFarTerms}, involves the Whitney extension and touching a function from above. The proof of this uses Lemma \ref{lem: extension operators preserve order up to small error} to a great degree. We record it as a proposition for later use.

\begin{prop}\label{prop:WhitneyObeysBetaTouchingFromAbove}
	Let $x_0 \in G_n$ be fixed, and let $f\in C_b(M)$ be such that $f(x_0)=0$. Let $\beta\in [0,3)$ and $\ep\in[0,1)$.  Consider the function $w(x):=f(x)\rho(d(x,x_0)^{\beta+\ep})$.  There is a dimensional constant, $C$, and a function $R_{n,x_0}$ such that $R_{n,x_0}(x_0)=0$, $\|R_{n,x_0}\|_{C^\beta} \to 0$ as $n\to\infty$, and
	\begin{align*}
		\pi_n^\beta(w,x)\leq C \norm{f}_{L^\infty}(\rho(d(x,x_0)^{\beta+\ep})+R_{n,x_0}(x)).
	\end{align*}
	Here, $\rho$ is the function introduced in Definition \ref{def:RhoTheSmoothApproxOfMin(1,t)}.
\end{prop}

\begin{proof}
 For the sake of brevity, we only provide the details for the case where $\beta>2$, the other cases are simpler and the details are left to the reader. It will be convenient to introduce the following two functions 
  \begin{align*}
    w_0(x) & := \rho(d(x,x_0)^{\beta+\varepsilon}),\\	  
    \tilde w(x) & := \|f\|_{L^\infty}w_0(x)-f(x)w_0(x) = \|f\|_{L^\infty}w_0(x)-w(x).
  \end{align*}
  Clearly, $w_0(x), \tilde w(x)\geq 0$ for all $x\in M$ and $w_0(x_0) = \tilde w(x_0)=0$.  Using the definition of $\pi_n^\beta$, and the positivity of $\|f\|_{L^\infty}-f(x)$, it is not difficult to show that
  \begin{align*}
    \pi_n^\beta( \tilde w,x) & \geq -C\|f\|_{L^\infty}d(x,x_0)^{\beta+\varepsilon},\\
    \pi_n^\beta( \tilde w,x) & \geq -C\|f\|_{L^\infty}h_n.	
  \end{align*}
  Then, imitating the argument used in Lemma \ref{lem: extension operators preserve order up to small error}, we can construct a function function $\tilde R_{n,\beta+\varepsilon,x_0}$ such that $\tilde R_{n,\beta+\varepsilon,x_0}(x_0)=0$, $\|\tilde R_{n,\beta+\varepsilon,x_0}\|_{C^\beta} \to 0$ as $n\to\infty$, and
    \begin{align*}
      & \pi_n^\beta(\tilde w,x)+\|f\|_{L^\infty}\tilde R_{n,\beta+\varepsilon,x_0}(x) \geq 0 \Rightarrow \|f\|_{L^\infty(M)}(\pi_n^\beta(w_0,x)+\tilde R_{n,\beta+\varepsilon,x_0}(x)) \geq \pi_n^\beta(w,x).
    \end{align*}
  On the other hand, from Proposition \ref{prop:Appendix Cbeta implies bound on Taylor remainder}, we have that if $d(x,x_0)\leq 4\delta \sqrt{d}$, then
  \begin{align*}
    \pi_n^\beta(w_0,x) \leq l( \nabla \pi_n^\beta(w_0),x_0;x)+q( \nabla^2 \pi_n^\beta(w_0),x_0;x)+C\|w_0\|d(x,x_0)^{\beta+\varepsilon}.
  \end{align*}
  Using that $\nabla w_0(x_0)=0$ for $\beta>1$, $\nabla^2 w_0(x_0)=0$ for $\beta>2$, together with Lemma \ref{lem:InterpolatedDerivativesAreCloseToRealDerivatives}, it is easy to see that (cf. Proposition \ref{prop: local interpolation operators})
  \begin{align*}
    \| l( \nabla \pi_n^\beta(w_0),x_0;\cdot)\|_{C^\beta(B_{\delta}(x_0))} & \leq C\tilde h_n \|w_0\|_{C^\beta},\textnormal{ for } \beta>1,\\
    \| q( \nabla^2 \pi_n^\beta(w_0),x_0;\cdot)\|_{C^\beta(B_{\delta}(x_0))} & \leq C\tilde h_n^{\beta-2}\|w_0\|_{C^\beta},\textnormal{ for } \beta>2.
  \end{align*} 
  Combining this with the function $\tilde R_{n,\beta+\varepsilon,x_0}$, it is not hard to see there is a function $R_{n,x_0}$ vanishing at $x_0$, such that 
  \begin{align*}
    & \pi_n^\beta(w,x) \leq C\|f\|_{L^\infty(M)}\left ( d(x,x_0)^{\beta+\varepsilon}+R_{n,x_0} (x)\right ),\\
    & \lim \limits_{n\to 0}\|R_{n,x_0}\|_{C^\beta} = 0,
  \end{align*}	  
  and the proposition is proved.
\end{proof}

The estimate in Proposition \ref{prop:WhitneyObeysBetaTouchingFromAbove} lead , via the GCP, to a useful estimate for $I$ and $I_n$.

\begin{lem}\label{lem:IInRespectComparisonWithBetaGrowthFromAbove}
  Let $x\in G_n$, $\beta \in (0,2]$, $f\in C_b(M)$, and $u\in C^\beta_b(M)$ be fixed.  Define the function, $w$, to be $w(y)=f(y)\rho(d(x,y)^\beta)$. Then for a universal $C$ it holds that
  \begin{align*}
    I(\pi_n^\beta u+\pi_n^\beta w,x)- I(\pi_n^\beta u, x)
      \leq C\norm{f}_{L^\infty(M)}\norm{\rho(d(x,\cdot)^\beta)+R_{n,x}(\cdot)}_{C^\beta(M)},
  \end{align*}
  and
  \begin{align*}
    E_n^0T_nI(\pi_n^\beta u+\pi_n^\beta w,x)- E_n^0T_nI(\pi_n^\beta u, x)
      \leq C\norm{f}_{L^\infty(M)}\norm{\rho(d(x,\cdot)^\beta)+R_{n,x}(\cdot)}_{C^\beta(M)},
  \end{align*}
  where $R_{n,x}$ is as in Proposition \ref{prop:WhitneyObeysBetaTouchingFromAbove}.
\end{lem}

\begin{proof}[Proof of Lemma \ref{lem:IInRespectComparisonWithBetaGrowthFromAbove}]
	We note that by Proposition \ref{prop:WhitneyObeysBetaTouchingFromAbove}  there is the touching of the two functions at $x$:
	\begin{align*}
		\pi_n^\beta u(y) + \pi_n^\beta w(y) 
		\leq \pi_n^\beta u(y) + C\norm{f}_{L^\infty}(\rho(d(x,y)^\beta)+R_{n,x}(y)).
	\end{align*}
	Thus, by the GCP,
	\begin{align*}
		I(\pi_n^\beta u + \pi_n^\beta w, x) 
		\leq I(\pi_n^\beta u + C\norm{f}_{L^\infty}(\rho(d(x,\cdot)^\beta)+R_{n,x}(\cdot)), x).
	\end{align*}
	Subtracting $I(\pi_n^\beta,x)$ from both sides, and using the Lipschitz assumption on $I$, we see that
	\begin{align*}
		I(\pi_n^\beta u + \pi_n^\beta w, x) - I(\pi_n^\beta u, x) &\leq I(\pi_n^\beta u + C\norm{f}_{L^\infty}(\rho(d(x,\cdot)^\beta)+R_{n,x}(\cdot)), x) - 
		I(\pi_n^\beta u, x)\\
		&\leq C\norm{I}_{Lip(C^\beta_b,C_b)}\cdot \norm{f}_{L^\infty}
		\cdot \norm{\rho(d(x,\cdot)^\beta)+R_{n,x}(\cdot)}_{C^\beta(M)}.
	\end{align*}
	We remark that also, the operator
	\begin{align*}
		E_n^0 T_n I,
	\end{align*}
	is an operator with the GCP if one only considers contact points belonging to $G_n$.  Therefore, substituting $E_n^0 T_n I$ instead of $I$ in the previous calculation preserves the result.
\end{proof}

\subsection{The Structure of $\D I_n$, compactness, and weak limits}\label{sec:StructureOfDIn}

In this section, we investigate in more detail the structure of the operators, $L_n\in \D I_n$.  In particular, for $x\in G_n$, each $L_n(\cdot,x)$ is expressed as the sum of an (approximately) local part and a nonlocal part (see Lemma \ref{lem:DiscreteMeasuresDInLocalAndNonlocalParts}).  The local and nonlocal parts are given in terms of a discrete measure associated to $L_n$ and $x$, and using this we obtain compactness properties and other limiting properties for the $L_n$.

\begin{lem}\label{lem:DiscreteMeasuresDIn}
	For all $L_n\in \D I_n$ and for all $x\in G_n$, there exist discrete signed Borel measures, $\mu_x^n$, and functions $C^n(x)$ such that for all $u\in C^\beta_b(M)$,
	\begin{align*}
		\forall x\in G_n,\ \ L_n(u,x) = C^n(x)u(x)
		+\int_{M\setminus\{x\}}u(y)-u(x)\; \mu_x^{n}(dy).
	\end{align*} 
	Moreover,
	\begin{align}\label{eqMMonM:Cn}
		C^n(x) = L_n(1,x),
	\end{align}
	and
	\begin{align*}
		\mu_x^n(dy) = \sum_{y\not=x} K^n(x,y)\del_y(dy),\ \ \text{with}\ 
		K(x,y) = L_n(E_n^\beta e_y,x),
	\end{align*}
	where $e_y \in C(G_n)$ are the ``basis'' functions introduced in Section \ref{sec: Min-Max finite dimension}.
\end{lem}

\begin{proof}[Proof of Lemma \ref{lem:DiscreteMeasuresDIn}]
	The proof is immediate from Section \ref{sec: Min-Max finite dimension} for any $l_n\in \D i_n$, which are linear mappings from $C(G_n)\to C(G_n)$.  However, thanks to Corollary \ref{cor:RelateDifferentialSetOfLittleAndBigIn}, we know that any such $L_n$ is of the form $E_n^0 \circ \ l_n \circ T_n$, for some $l_n\in \D i_n$.  Since $E_n^0$ is, by definition, an extension operator, and the Lemma only uses information of $u$ and $l_n$ restricted to $G_n$, we see that indeed the formula from Lemma \ref{lem: Finite Dimensional Courrege} and $l_n$ is preserved for $L_n$ as well.
\end{proof}

Recall that if $x \in G_{n'}$ for some $n'$, then $x\in G_n$ all $n\geq n'$. This means that, for $x \in \cup G_n$, and for any sequence of operators $L_n$ with $L_n \in \D I_n$, we have a respective sequence of Borel measures $\{ \mu_x^{n}\}_{n\geq n'}$ . Therefore, we are interested in obtaining bounds on these measures that allow us to obtain some kind of limit (at least along subsequences) as $n\to \infty$. These bounds are obtained in the following two lemmas.

\begin{lem}\label{lem:LnObeysBetaTouchingFromAboveBound}
	Let $\beta \in(0,2]$. If $L_n\in \D I_n$, then $L_n$ obeys the estimate of Corollary \ref{cor:NormEstimateLocalizedToNearAndFarTerms}. Moreover, given $x\in G_{n'}$ fixed, , $\ep\in[0,1)$, $f(x)=0$, and $w(y)=f(y)\rho(d(x,y)^{\beta+\ep})$, we have, for $n\geq n'$,
	\begin{align*}
		L_n(w,x)\leq C\norm{f}_{L^\infty(M)}\norm{\rho(d(x,\cdot)^{\beta+\ep})+R_{n,x}(\cdot)}_{C^\beta(M)}.
	\end{align*} 
\end{lem}

\begin{proof}[Proof of Lemma \ref{lem:LnObeysBetaTouchingFromAboveBound}]
	This follows by an argument entirely analogous to the one in Lemma \ref{lem:DInOperatorNormBound}. In this case, one invokes Lemma \ref{lem:IInRespectComparisonWithBetaGrowthFromAbove} to establish the estimates on any $L_n\in\D I_n$ that is an actual Fr\`echet derivative, $L_n=DI_n|_u$ at some $u\in C^\beta_b$, then pass the resulting estimate by density and convexity to all other elements of $\D I_n$.
\end{proof}

\begin{lem}\label{lem:MuNTotalVariation}
	Let $L_n\in\D I_n$, and  $\{\mu_x^n\}_{x\in G_n}$ the respective signed measures associated to $L_n$ by Lemma \ref{lem:DiscreteMeasuresDIn}.  If $m_x^n$ is the signed measure defined as
	\begin{align*}
		m_n^x(dy)=\rho(d(x,y)^{\beta})\mu_x^n(dy),
	\end{align*} 
	then, the total variation of $m_x^n$, denoted $\displaystyle\abs{m_x^n}$, is bounded independently of $n$ and $x$.
	
	When dealing with $C^\beta = C^1_b$, we replace $m^n_x$ by 
	\begin{align*}
		m_n^x(dy)=\rho(d(x,y)^{1+\ep})\mu_x^n(dy)\ \text{for}\ \ep\in(0,1).
	\end{align*}
\end{lem}

\begin{proof}
We note that for $x$ fixed and for any $f\in C^\beta_b(M)$, the function $f(y)\rho(d(x,y)^\beta)\in C^\beta_b(M)$.  Furthermore, since $f(x)\rho(d(x,x)^\beta)=0$, we obtain via Lemma \ref{lem:DiscreteMeasuresDIn} that
\begin{align*}
	L_n(f\rho(d(x,\cdot)^\beta),x) = \int_{M} f(y)\rho(d(x,y)^\beta)\mu_x^n(dy).
\end{align*}
Thus, the estimate of Lemma \ref{lem:LnObeysBetaTouchingFromAboveBound} immediately shows that
\begin{align*}
	\int_{M} f(y) m_x^n(dy)\leq C\norm{f}_{L^\infty}\norm{\rho(d(x,\cdot)^\beta)+R_{n,x}(\cdot)}_{C^\beta(M)},
\end{align*}	
and we obtain the bound taking the supremum over $f$ with $\|f\|_{L^\infty}\leq 1$, by duality.

\end{proof}

\begin{DEF}\label{def:EtaAndTildeEtaDef}
	We use smooth approximations to the indicator and bump functions.  Let $x$ be fixed, with $\eta^\epsilon_x$ and $\tilde\eta^\epsilon_x$ be smooth functions satisfying
	\begin{align*}
		& 0\leq \eta^\epsilon_x(y)\leq 1,\ \ \eta^\epsilon_x(y)\geq \Indicator_{B_{r_0}}(y),\ \ \eta^\epsilon_x(y)\searrow \Indicator_{B_{r_0}(x)}(y), \textnormal{ as }\epsilon \to 0
		\\
		& 0\leq \tilde\eta^\epsilon_x(y)\leq 1,\ \ \tilde\eta^\epsilon_x(y) \geq \Indicator_{B_{\epsilon}(x)}(y),\ \ \tilde\eta^\epsilon_x(y)\searrow \Indicator_{\{x\}}(y), \textnormal{ as }\epsilon \to 0.
	\end{align*}
\end{DEF}

\begin{DEF}\label{def:TaylorPolynomial}
	For $\beta\in[0,3)$ and $\epsilon\in(0,1)$, the $\epsilon$-Taylor ``polynomial'' of $u$ centered at $x$ is the function $T^{\epsilon,\beta}_x(u,y) \in C^\beta_b(M)$ given by
	\begin{align*}
		T^{\epsilon,\beta}_x(u,y) = 
		\begin{cases}
			u(x)\ &\text{if}\ \beta\in(0,1)\\
			u(x) + \eta^\epsilon_x(y)l(x,\grad u(x);y)\ &\text{if}\ \beta\in[1,2)\\
			u(x) + \eta^\epsilon_x(y) l(x,\grad u(x);y) + \tilde\eta^\epsilon_x(y) q(x,\nabla^2u(x);y)\ &\text{if}\ \beta \in [2,3).
		\end{cases}
	\end{align*}
\end{DEF}

\begin{DEF}\label{def:Adeln and Bdeln}
  Fix $L_n \in \D I_n$ and $x\in G_n$, and let $\mu^n_x$ be the measure from Lemma \ref{lem:DiscreteMeasuresDIn} and let $I$ denote the identity matrix $(TM)_x \to (TM)_x$.
  
  \noindent Then, we define $A^{\epsilon,n}(x):(TM)_x \to (TM)_x$ by
  \begin{align}\label{eqMMonM:Adeln}
    A^{\epsilon,n}(x) = \int_{M} \tilde \eta_x^{\epsilon}(y)q(I,x;y)\mu_x^n(dy).		    
  \end{align}	  
  Furthermore, using duality, we define $B^{\epsilon,n}(x) \in (TM)_x$ as the unique vector in $(TM)_x$ such that 
  \begin{align}\label{eqMMonM:Bdeln}	
    (B^{\epsilon,n}(x),p)_{g_x} = \int_{M} \eta_x^{\epsilon}(y)l(p,x;y)\mu_x^n(dy),\;\;\;\forall\;p\in (TM)_x.
  \end{align}	
	
\end{DEF}
 
\begin{lem}\label{lem:DiscreteMeasuresDInLocalAndNonlocalParts}

  Let $L_n\in\D I_n, x\in M$ and $u\in C^3_b(M)$. Then, for some ``remainder term'', denoted $\textnormal{(Error)}_{L_n,u,x}$, we have the following representation for $L_n(u,x)$: If $\beta=2$, then 
  \begin{align*}
    L_n(u,x) & = \Tr(A^{\epsilon,n}(x)\nabla ^2u(x))+(B^{\epsilon,n}(x),\nabla  u(x))_{g_x}+C^{n}(x)u(x)\\
    &\ \ \ \ \ \ \  +\int_{M}u(y)-T^{\epsilon,\beta}_x(u,y)\;\mu_x^{n}(dy)+\textnormal{(Error)}_{L_n,u,x};
  \end{align*}
  if $\beta\in [1,2)$, then
  \begin{align*}
    L_n(u,x) = (B^{\epsilon,n}(x),\nabla u(x))_{g_x}+C^{n}(x)u(x)
      +\int_{M}u(y)-T^{\epsilon,\beta}_x(u,y)\;\mu_x^{n}(dy)+\textnormal{(Error)}_{L_n,u,x};
  \end{align*}
  and if $\beta\in (0,1)$, then (note there is no remainder term in this case)
  \begin{align*}
    L_n(u,x)= C^{n}(x)u(x) + \int_{M}u(y)-u(x)\;\mu_x^{n}(dy).
  \end{align*}
  Moreover, for every $\epsilon>0$ fixed, the term $\textnormal{(Error)}_{L_n,u,x}$ satisfies the estimate
  \begin{align*}
    |\textnormal{(Error)}_{L_n,u,x}| \leq Ch_n\|u\|_{C^3(M)}.  	  
  \end{align*}	         
  While $A^{\epsilon,n}(x)$, $B^{\epsilon,n}(x)$, and $C^{n}(x)$ satisfy the estimates
  \begin{align*}
    |A^{\epsilon,n}(x)|_{g_x} \leq C,\;\;|B^{\epsilon,n}(x)|_{g_x} \leq C,\;\;|C^{n}(x)|\leq C. 
  \end{align*}	  
  In all cases $C$ denoting a universal constant.
\end{lem}

\begin{proof}[Proof of Lemma \ref{lem:DiscreteMeasuresDInLocalAndNonlocalParts}]
  When $\beta\in(0,1)$, then we just apply Lemma \ref{lem:DiscreteMeasuresDIn} directly to $L_n$, and the Lemma in this case is trivial. For $\beta>1$, the key observation is that we can write, for fixed $x\in G_{n'}$ and $n\geq n'$,
  \begin{align*}
    L_n(\cdot,x) = L_n(\cdot,x)\circ T^{\epsilon,\beta}_x + L_n(\cdot,x)\circ(\Id - T^{\epsilon,\beta}_x).
  \end{align*}
  Then, the first three terms in the desired expression for $L_n(u,x)$ arise from $L_n(\cdot,x)\circ T^{\epsilon,\beta}_x$, using Definition \ref{def:TaylorPolynomial} to obtain $A^{\epsilon,n}(x)$ and $B^{\epsilon,n}(x)$. The term $\textnormal{(Error)}_{L_n,u,x}$ arises simply due to the perturbation of the gradient and Hessian made when applying $\pi_n^\beta$. However, Lemma \ref{lem:InterpolatedDerivativesAreCloseToRealDerivatives} guarantees the error made is bounded by $Ch_n\|u\|_{C^3(M)}$.
  
  As for the term $L_n(\cdot,x)\circ(\Id - T^{\epsilon,\beta}_x)$, note that by definition 
  \begin{align*}
    u(x)-T^{\epsilon,\beta}_x(u,x)=0,
  \end{align*}
  and so the terms $C^n(x)$ from Lemma \ref{lem:DiscreteMeasuresDIn} are not present in the representation of the second term.
\end{proof}

The next lemma yields lower bounds for $A^{\epsilon,n}$ and $\mu^{n}_x$. These bounds say that for large $n$, (and for fixed $\epsilon$ and $x\in \cup G_n$), $A^{\epsilon,n}$ is almost a positive semi-definite matrix, and $\mu^{n}_x$ is almost a positive measure.

\begin{lem}\label{lem:DiscreteIngredientBounds}  There is a universal constant $C$, such that if $x\in G_{n'}$, and $n\geq n'$, then:

  With $I$ denoting the identity map $(TM)_x\to (TM)_x$, we have
  \begin{align*}
    A^{\epsilon,n}(x)\geq -Ch_n^\gam\epsilon^{-3}I.
  \end{align*}
  Moreover,  for all $f\in C^3_b(M)$ such that $f\geq 0$ and $f(x)=0$, we have
  \begin{align*}
    \int_M f(y) \mu^n_x(dy)\geq -Ch_n^\gam\norm{f}_{C^3(M)}.
  \end{align*}
  Here, $h_n$ is as defined in \eqref{eqFinDimApp: hn def}, and $\gamma$ is as in Lemma \ref{lem:EnTnEstToUInC3}.
\end{lem}

\begin{proof}[Proof of Lemma \ref{lem:DiscreteIngredientBounds}]
  Both of these results are immediate consequences of Lemma \ref{lem:In has an approximate GCP}.  Indeed, for the case of $A^{\epsilon,n}(x)$, consider a fixed unit vector, $v\in (TM)_x$, and the function
  \begin{align*}
    w(y)=\tilde\eta^\epsilon_x(y)q(v\otimes v,x;y),
  \end{align*}		
  where $\tilde \eta^\epsilon_x(y)$ is the function from Definition \ref{def:EtaAndTildeEtaDef}.		
		
  On the other hand, from the definition of $q$, we have that $\grad^2w(x)=v\otimes v$ and $\grad w(x)=0$, see Remark \ref{rem:FinDimApp l and q derivatives in a chart}.  It is also clear that  $w(y)\geq0$ for all $y$ and that $w(x)=0$. Then, applying Lemma \ref{lem:DiscreteMeasuresDIn} to $w$,  it follows that 
  \begin{align*}
    L_n(w,x) =\int_{M} \tilde\eta^\epsilon_x(y)q(v\otimes v,x;y) \mu^n_x(dy).
  \end{align*}
  In light of the formula \eqref{eqMMonM:Adeln}, we have that
  \begin{align*}
    L_n(w,x) = \Tr(A^{\epsilon,n}(x) v\otimes v).
  \end{align*}
  Then, using Lemma \ref{lem:In has an approximate GCP} to bound $L_n(w,x)$, we conclude that
  \begin{align*}
     \Tr(A^{\epsilon,n}(x) v\otimes v) \geq -Ch_n^\gamma \|w\|_{C^3(M)}.
  \end{align*}
  Using that $\tilde\eta^\epsilon_x$, $\norm{w}_{C^3}\leq C\epsilon^{-3}$, as well as Proposition \ref{prop: local interpolation operators}, we obtain the lower bound for $A^{\epsilon,n}(x)$.
  
  It remains to prove the bound for $\mu^n_x$. We use Lemma \ref{lem:DiscreteMeasuresDIn} once again, and apply to a function $f\in C^3(M)$ such that $f(x)=0$, which yields
  \begin{align*}
    L_n(f,x) = \int_{M} f(y)\mu^n_x(dy).
  \end{align*}
  Then, Lemma \ref{lem:In has an approximate GCP} applied to the left hand side yields the desired bound.
\end{proof}

The next lemma is concerned with the ``pointwise'' limits for sequences $\{L_n\}$ where for each $n$ we have $L_n \in \D I_n$ for each $n$. The lemma says essentially the following: given $x \in \bigcup G_{n'}$, the sequence $\{L_n(\cdot,x)\}_{n\geq n'}$, seen as a sequence of linear functionals $C^\beta_c(M) \to \mathbb{R}$, must converge along a subsequence to a functional of Levy type based at $x$ (recall Definition \ref{def:LevyTypeFunctional}).

\begin{lem}\label{lem:DInCompactness}
  Let $x \in G_{n'}$, and for every $n\geq n'$ let $L_n \in \D I_n$.  There is a subsequence $n_k\to \infty$ such that $L_{n_k}(\cdot,x)$ converges weakly to some $L_x:C^\beta_b \to \mathbb{R}$, that is, 
  \begin{align*}
    \lim\limits_{k\to \infty} L_{n_k}(u,x) = L_x(u),\;\;\;\forall\;u\in C^\beta_c(M),
  \end{align*}
  where $L_x$ is a functional of L\'evy-type based at $x$. Furthermore, the functional $L_x$ inherits an analogue of \eqref{eqMMonM:AssumptionQuatifiedLocalUnifConv1MODIFIED}, namely, there is a universal $C$ such that
  \begin{align*}
    |L_x(u)| \leq C\|u\|_{C^\beta(B_{2r+3})}+C\omega(r)\|u\|_{L^\infty(M)}.    	  
  \end{align*}	  
\end{lem}

\begin{rem}
  The proof below will actually say more than what was stated in Lemma \ref{lem:DInCompactness}, and it shall highlight how Levy operators arise naturally as the limits of the Laplacian on sequences of weighted graphs that are becoming large as $n\to\infty$. 
  
  Concretely, fix $x\in G_{n'}$. Let $C^n(x), A^{\epsilon,n}(x)$, and $B^{\epsilon,n}(x)$ be as in \eqref{eqMMonM:Cn},\eqref{eqMMonM:Adeln}, and \eqref{eqMMonM:Bdeln}. Then, as shown below, there are subsequences $n_k \to \infty,\epsilon_j \to 0$ such that: 1) we have the limits
  \begin{align*}
    & A(x) := \lim\limits_{j}\lim\limits_{k}A^{\epsilon_j,n_k}(x),\;\;\; B(x) := \lim\limits_{j}\lim\limits_{k}B^{\epsilon_j,n_k}(x),\;\;\; C(x) := \lim\limits_{k}C^{n_k}(x),
  \end{align*}	  
  2) $\mu_{x}^{n_k}$ converges weakly in compact subsets of $M\setminus \{x\}$ to a positive measure $\mu_x$ and 3) for every $u \in C^\beta_c(M)$ we have
  \begin{align*}
    \lim\limits_{k\to \infty} L_{n_k}(u,x) & = \Tr(A(x) \nabla^2u(x))+(B(x),\nabla u(x))_{g_x}+C(x)u(x)\\
	  & \;\;\;\;+\int_{M\setminus \{x\} }u(y)-u(x)-\chi_{B_{r_0}}(y)(\nabla u(x),\exp_x^{-1}(y))_{g_x}\;\mu_x(y).	  
  \end{align*}	  
\end{rem}

\begin{proof}[Proof of Lemma \ref{lem:DInCompactness}]
	For this proof, we only demonstrate the case of $C^\beta_b=C^2_b$ as it includes all of the details.  The other four cases of $\beta$ follow from a similar and simpler argument.
	
	\textbf{The case, $C^\beta_b =C^2_b$.}

	Let $x\in\union_n G_{n}$ be fixed.  Since $G_n$ are increasing, we know $x\in G_n$ for all $n\geq n'$ for some $n'$.  Also, by Lemma \ref{lem:MuNTotalVariation}, we know that the measures $m_x^n$ have bounded variation in $M\setminus \{x\}$, so we are free to use the Jordan decomposition to write
	\begin{align*}
		m_x^{n} = (m_x^{n})^+ - (m_x^{n})^-.
	\end{align*}
	Furthermore, both of the measures $(m_x^{n})^+$ and $(m_x^{n})^-$ are uniformly bounded in $x$, and $n$, for $n\geq n'$ given by Lemma \ref{lem:MuNTotalVariation}.

	\underline{Step 1: extracting weak limits in $n$ for $\epsilon$ fixed.}
	
	We can use the compactness of Radon measures, e.g. \cite[p. 55]{EvGa-92} to extract weakly convergent subsequences of $(m_x^n)^+$ and $(m_x^n)^-$, and hence also $m_x^n$.  We will label by $n_k$, and we will call the weak limiting signed measure as $\bar m_x$, i.e.
	\begin{align*}
		m^{n_k}_x \rightharpoonup \bar m_x,
	\end{align*}
	but we note that a posteriori we will validate that $\bar m_x\geq0$.
	For the moment, we keep $\epsilon$ fixed.

	Let $u\in C^2_c(M)$.  By Lemma \ref{lem:DiscreteMeasuresDInLocalAndNonlocalParts}, we have
	\begin{align}
	    L_n(u,x) & = \Tr(A^{\epsilon,n}(x)\nabla^2u(x))+(B^{\epsilon,n}(x),\nabla u(x))_{g_x} + C^{n}(x)u(x)\label{eqMMonM:CompactnessLocalPart1}\\
		&\ \ \ \ \ \ \  +\int_{M\setminus\{x\}}u(y)-T^{\epsilon,\beta}_x(u,y)\;\mu_x^{n}(dy)+\textnormal{(Error)}_{L_n,u,x}\label{eqMMonM:CompactnessNonlocalPart1}. 
	\end{align}  
	First we work on the nonlocal part, \eqref{eqMMonM:CompactnessNonlocalPart1}.  We see that
	\begin{align*}
		&\int_{M\setminus\{x\}}u(y)-T^{\epsilon,\beta}_x(u,y)\;\mu_x^{n}(dy)\\
		&= \int_{M\setminus\{x\}}\frac{u(y)-T^{\epsilon,\beta}_x(u,y)}{\rho((d(x,y)^2)}\rho((d(x,y)^2)\mu_x^{n}(dy)\\
		&= \int_{M\setminus\{x\}}\frac{u(y)-T^{\epsilon,\beta}_x(u,y)}{\rho((d(x,y)^2)}m_x^{n}(dy).
	\end{align*}
	At this point, we note that by the $C^2$ nature of $u$, the function,
	\begin{align*}
		\frac{u(y)-T^{\epsilon,\beta}_x(u,y)}{\rho((d(x,y)^2)},
	\end{align*}
	does in fact extend to a continuous function on $M$.  Hence, by the weak limit of $m_x^{n_k}$, we see then that
	\begin{align*}
	\lim_{k\to\infty} \int_{M\setminus\{x\}}\frac{u(y)-T^{\epsilon,\beta}_x(u,y)}{\rho((d(x,y)^2)}m_x^{n_k}(dy)
	= \int_{M\setminus\{x\}}\frac{u(y)-T^{\epsilon,\beta}_x(u,y)}{\rho((d(x,y)^2)}\bar m_x(dy).
	\end{align*}
	We define the limiting L\'evy measure on $M\setminus\{x\}$ as 
	\begin{align*}
		\bar \mu_x(dy) = (\rho(d(x,y)^2))^{-1}\bar m_x(dy),
	\end{align*}
	and we note that by Lemma \ref{lem:DiscreteIngredientBounds} we also know that $\bar \mu_x$ is indeed non-negative and satisfies, by definition the integrability condition independent of $x$
	\begin{align*}
		\int_{M\setminus \{x\} } \min(d(x,y)^2,1)\bar \mu_x(dy)\leq C
	\end{align*}
	because by definition $\bar m_x$ are finite measures with total mass independent of $x$.

	Next, we move on to the local part of $L_n$, given in Lemma \ref{lem:DiscreteMeasuresDInLocalAndNonlocalParts}, which we recorded in \eqref{eqMMonM:CompactnessLocalPart1}.  We will establish that the matrices $A^{\epsilon,n}(x)$ and vectors $B^{\epsilon,n}(x)$ are all uniformly bounded in $\epsilon$, $n$, $x$.  Thus, weak limits are immediate (as bounded sequences in Euclidean space). First, we note by a direct calculation that for $x$ fixed, as functions of $y$, $\tilde\eta^\epsilon_x q(x,e_i\otimes e_j;y)$ are in $C^2_b(M)$, independent of $x$, $n$, and $\epsilon$.  Furthermore, the functions $\eta^\epsilon_x(y)l(x,e_i;y)$ have a bounded $C^2$ norm inside, e.g. $y\in \overline{B_{1/2}(x)}$.  Thus the bounds for $A^{\epsilon,n}(x)$ follow from Lemma \ref{lem:DInOperatorNormBound}, and the bounds for $B^{\epsilon,n}(x)$ follow from Lemma \ref{lem:NormEstimateLocalizedToNearAndFarTermsWithTestFunction}, equation \eqref{eqMMonM:NormEstimateCbetaAndLinfty}.
	
	This means that we also have coefficients that depend on $\epsilon$
	\begin{align*}
		\bar A^\epsilon(x),\ \ \bar B^\epsilon(x),\ \ \bar C(x),
	\end{align*}
	such that along a subsequence, again labeled as $n_k$, we have (recall, $x$ is fixed)
	\begin{align*}
		\lim_{k\to\infty} L_n(T^{\epsilon,\beta}_x u, x) = 
		 \Tr(\bar A^\epsilon(x)\nabla^2u(x))+(\bar B^\epsilon(x),\nabla u(x))_{g_x}+\bar C(x)u(x).
	\end{align*}
	Furthermore, by Lemma \ref{lem:DiscreteIngredientBounds}, we see that 
	\begin{align*}
		\bar A^\epsilon(x)\geq 0.
	\end{align*}

	\underline{Step 2:  removing the $\epsilon$ dependence.}
	
	We note that the definition of the $\epsilon$-Taylor expansion (Definition \ref{def:TaylorPolynomial}) requires smooth approximations of $\Indicator_{B_{r_0}(x)}$ and $\Indicator_{\{x\}}$, with $\epsilon$ being a small parameter.
	First, we note that in the previous paragraph, it was established that $A^{\epsilon,n}(x)$ and $B^{\epsilon,n}(x)$ are bounded independently of $\epsilon$, $n$, and $x$.  Thus the limits $\bar A^\epsilon(x)$ and $\bar B^\epsilon(x)$ are still bounded independently of $\epsilon$ and $x$.  Invoking once again the compactness of bounded closed sets in finite dimensional spaces, we obtain a subsequence in $\epsilon$, along which 
	\begin{align*}
		\lim_{\epsilon_k\to0} 
		 &\Tr(\bar A^{\epsilon_k}(x)\nabla^2u(x))+(\bar B^{\epsilon_k}(x),\nabla u(x))_{g_x}+\bar C(x)u(x)\\
		 &=
		 \Tr(\bar A(x)\nabla^2u(x))+(\bar B(x),\nabla u(x))_{g_x}+\bar C(x)u(x),
	\end{align*}
	and again, we preserve 
	\begin{align*}
		\bar A(x)\geq 0.
	\end{align*}
	Next we conclude with the $\epsilon\to0$ limits for 
	\begin{align*}
		\int_{M\setminus\{x\}} u(y)-T^{\epsilon,\beta}_x(u,y) \bar \mu_x(dy).
	\end{align*}
	Using the bound on the error term in the Taylor expansion (see Proposition \ref{prop:Appendix Cbeta implies bound on Taylor remainder}), and since $u\in C^2_c$, we have that as $y\to x$,
	\begin{align*}
          |u(y)-u(x)-\eta^\epsilon(y)l(x,\grad u(x);y)|\leq \|u\|_{C^2}\rho(d(x,y)^2).
	\end{align*}
	Hence, by dominated convergence, we see that (recall Definition \ref{def:EtaAndTildeEtaDef} for $\eta^\epsilon$)
	\begin{align*}
		\lim_{\epsilon\to0} &\int_{M\setminus\{x\}} u(y)-u(x)-\eta^\epsilon(y)l(x,\grad u(x);y)\bar \mu_x(dy)\\
		&= \int_{M\setminus\{x\}} u(y)-u(x)-\Indicator_{B_{r_0}(x)}l(x,\grad u(x);y)\bar \mu_x(dy).
	\end{align*}
	For the quadratic term, $q(x,\nabla^2 u(x);y)$, we note that
	\begin{align*}
		\abs{\tilde\eta^\epsilon(y)q(x,\nabla^2 u(x);y)}\leq C\norm{u}_{C^2(M)}\rho(d(x,y)^2)\Indicator_{B_{2\epsilon}(x)}.
	\end{align*}
	Hence, since $\bar\mu_x\geq 0$,
	\begin{align*}
		\int_{M\setminus\{x\}}\abs{\tilde\eta^\epsilon(y)q(x,\nabla^2 u(x);y)} \bar\mu_x(dy)
		\leq C\norm{u}_{C^2(M)}\int_{B_{2\epsilon}(x)\setminus\{x\}} \rho(d(x,y)^2)\bar\mu_x(dy). 
	\end{align*}
	Since $\bar m_x$ is a finite measure, we see by the continuity of $\bar m_x$ that necessarily
	\begin{align*}
		\bar m_x(B_{2\epsilon}(x)\setminus\{x\})\to 0\ \text{as}\ \epsilon\to0.
	\end{align*}
	We conclude then that
	\begin{align*}
		\lim_{\epsilon\to0} \int_{M\setminus\{x\}}\abs{\tilde\eta^\epsilon(y)q(x,\nabla^2 u(x);y)} \bar\mu_x(dy) = 0.
	\end{align*}
	
	This means that after the subsequential limits first in $n$ followed by $\epsilon$, we do indeed recover for $u\in C^2_c(M)$,
	\begin{align*}
		\lim_{\epsilon_j\to0}\lim_{n_k\to\infty} L_{n_k}(u,x) = \bar L_x(u),
	\end{align*}
	and $\bar L_x$ is a functional of the L\'evy form (Definition \ref{def:LevyTypeFunctional}, \eqref{eqIntro:LevyTypeDef}).  This concludes the lemma for the case $\beta=2$.

	Now we make a few remarks as to how the remaining cases follow from the proof for $\beta=2$.  This is the only part in the proof in which there is a true distinction between them, and it all rests on the ability to extend continuously the function
	\begin{align*}
		\frac{u(y)-T^{\epsilon,\beta}_x(u,y)}{\rho((d(x,y)^\beta)}.
	\end{align*}
	
	\textbf{The case, $\beta=1$, $C^1_b(M)$.}
	This case is completely analogous to $\beta=2$, and on one hand simpler because $A^{\epsilon,n}(x)\equiv0$, but on the other hand, complicated by Lemma \ref{lem:MuNTotalVariation}.  Now, we let $\ep\in(0,1)$ be given, and we take $u\in C^{1,\ep}_b$, and we invoke Lemma \ref{lem:MuNTotalVariation} with $1+\ep/2$.  Taylor's theorem applies in exactly the same way for the continuity of the quantity
	\begin{align*}
		\frac{u(y)-T^{\epsilon,1}_x(u,y)}{\rho((d(x,y)^{1+\ep/2})},
	\end{align*}
	at $y=x$, where now the numerator has slightly stronger decay, by choice of $u\in C^{1,\ep}$.
	
	\textbf{The cases of $C^\beta_b(M)=C^{0,1}(M)$ and $C^\beta_b(M)=C^{1,1}(M)$.}
	These cases go in the same way as respectively the cases of $C^1$ and $C^2$ because we limit ourselves to only checking the formula for $u\in C^1$ and respectively $C^2$.  Hence, the respective continuity of, e.g.
	\begin{align*}
          \;\frac{u(y)-T^{\epsilon,1}_x(u,y)}{\rho((d(x,y))} \textnormal{ and }  \frac{u(y)-T^{\epsilon,2}_x(u,y)}{\rho((d(x,y)^2)},
	\end{align*}
	is unchanged.
	
	\textbf{The other cases of $\beta\in(0,2)$.}
	
	The only real difference here is that in these cases, we are applying the argument to $u\in C^{\beta+\ep}_b(M)$ for some small $\ep>0$. In this case, the slightly larger H\"older exponent, $\beta+\ep$, is what gives the continuity of
	\begin{align*}
		\frac{u(y)-T^{\epsilon,\beta}_x(u,y)}{\rho((d(x,y)^\beta)},
	\end{align*} 
	because the numerator is of the order $d(x,y)^{\beta+\ep}$.
\end{proof}

In the case $I$ satisfies the equicontinuity assumption \eqref{eqIntro:Equicontinuity Assumption}, one can do better than Lemma \ref{lem:DInCompactness}: one can show the compactness of the elements of $L_n \in \mathcal{D}I_n$ as linear operators. Moreover, the proof is rather straightforward, it boils down to the Arzel\'a-Ascoli theorem. 
\begin{lem}\label{lem:DInCompactness Strong}
  Suppose that $I$ satisfies \eqref{eqIntro:Equicontinuity Assumption}. Then, given a sequence $\{L_n\}$ with $L_n \in \D I_n$ for every $n$, there exists a subsequence $L_{n_k}$ and a bounded linear operator $L:C^\beta_b \to C_b$ such that
  \begin{align*}
    \lim\limits_{k\to\infty} L_{n_k}(u,x) =  L(u,x),\;\;\forall\;u\in C^3_c(M),\;\;x\in M.
  \end{align*}	  
\end{lem}

\begin{proof}
  Fix $K\subset M$ be a compact set, and let $\mathcal{B}$ denote the set
  \begin{align*}
    \mathcal{B} := \{ u \in C^3_{b}(M)  \mid u\equiv 0 \textnormal{ outside } K, \|u\|_{C^3(M)}\leq 1\}.
  \end{align*}	
  It is clear that $\mathcal{B}$ is a compact subset of $C^\beta_b$, for each $\beta<3$. From the assumption \eqref{eqIntro:Equicontinuity Assumption}, the continuity of $\pi_n^\beta$ (Theorem \ref{thm:EnTnContOperatorInCbetaNorm}), and the convergence of $\pi_n^\beta$ to the identity in $C^3_b$ (Lemma \ref{lem:EnTnEstToUInC3}), it follows that if $L_n \in \D I_n(v_n)$, where $v_n \in \mathcal{B}$, then 
  \begin{align*}	
    \{L_{n}(u,\cdot)\}_{u\in \mathcal{B}} \textnormal{ is equicontinuous }   	
  \end{align*}	
  In other words, the real valued functions given by
  \begin{align*}
    (u,x) \in K\times \mathcal{B} \to L_{n}(u,x),	   
  \end{align*}	  
  form an equicontinuous family of functions from $K\times \mathcal{B}$ to $\mathbb{R}$. In particular, this family of functions is precompact in $C(K\times \mathcal{B})$ with respect to uniform convergence. Therefore, there is some subsequence $n_k$ and some $L \in C(K\times \mathcal{B})$ such that
  \begin{align*}
    L_{n_k} \to L \textnormal{ uniformly in } K\times \mathcal{B}.
  \end{align*}
  By homogeneity, $L_{n_k}$ converges as a function defined for all functions $u \in C^3$ which are compactly supported on $K$. Moreover, using the linearity of the $L_{n_k}$ it is clear that $L$ is also a linear operator. Then, taking an increasing sequence of compacts $K_n$ which cover $M$, one can apply a Cantor diagonalization argument to obtain the desired sequence.
\end{proof}

\subsection{Limits of the finite dimensional min-max-- the proof of Theorem \ref{thm: Min-Max Riemannian Manifold} and Proposition \ref{prop:CasesOnBetaForMinMax} }

Now that we have collected various facts about $\D I_n$, we have enough information to finish the proof of Theorems \ref{thm: Min-Max Riemannian Manifold} and \ref{thm: Min-Max Riemannian Manifold With Operators}.  The last remaining step is to pass to the limit ``inside'' of the min-max.

\begin{proof}[Proof of Theorem \ref{thm: Min-Max Riemannian Manifold}]

The key point of this proof is to use the compactness established in Lemma \ref{lem:DInCompactness} to go from the min-max formula for $I_n$ to one for $I$. We introduce the family depending on $I$, 
\begin{align}
  \K_{Levy}(I) := &\hull\big( \big\{L:C^\beta_b \to \mathbb{R}\ : \exists \ n_k \to \infty \textnormal{ and } L_{n_k}\in \D I_{n_k}, \notag \\ 
		&   x_{k} \in G_{n_k} \text{ s.t.}\ L(f) = \lim_{k\to\infty} L_{n_k}(f,x_k)\ \forall f\in C^\beta_c(M)\label{eqMMonM:Def KLevy} \big\}   \big).
\end{align}
Among the implications of Lemma \ref{lem:DInCompactness}, $\K_{Levy}(I) \neq \emptyset$, and every element of $\K_{Levy}(I)$ is an operator of Levy type based at some $x\in M$. Then, our aim is to prove the following: for every $x\in M$, and every pair $u,v\in C^\beta_b(M)$, there is some $L \in \K_{Levy}(I)$ based at $x$ such that
\begin{align}\label{eqMMonM:ThisIsWhereYouWin}
    I(u,x) \leq I(v,x) + L(u-v).
\end{align}	
We proceed to prove \eqref{eqMMonM:ThisIsWhereYouWin} in increasing order of generality: 1) for all $u,v \in C^\beta_c$ and $x\in G_n$ for some $n$, 2) for all $u,v \in C^\beta_c$ and any $x \in M$, and finally 3) for all $u,v \in C^\beta_b$ (that is, $u,v$ that may not be compactly supported) and any $x\in M$.

Fix $u,v \in C^\beta_c(M)$, and let $x\in G_{n'}$, for some $n' \in \mathbb{N}$. Since the $G_n$ are increasing, we have that $x \in G_n$ for all $n\geq n'$. The min-max formula for $I_n$ with $n\geq n'$ (Lemma \ref{lem:InMinMax}) yields the existence of some $L_{n,u,x} \in \mathcal{D}I_n$ such that
\begin{align*}
  I_n(u,x) \leq I_n(v,x) + L_{n,u,x}(u-v,x).
\end{align*}
Given that $u,v \in C^\beta_c(M)$, Proposition \ref{prop:InConvergesUniformlyToI} guarantees that 
\begin{align*}
  \lim \limits_{n} I_n(u,x) = I(u,x),\;\;\lim \limits_n I_n(v,x) = I(v,x),
\end{align*}
and in particular,
\begin{align*}
  I(u,x) \leq I(v,x) + \limsup_{n} L_{n,u,x}(u-v,x).
\end{align*}
Applying Lemma \ref{lem:DInCompactness}, and the definition of $\K_{Levy}(I)$  \eqref{eqMMonM:Def KLevy}, we conclude the following: for any $x\in \bigcup G_n$, and $u,v\in C^\beta_c(M)$, there is a functional $L\in \K_{Levy}(I)$, based at $x$,  such that
\begin{align*}
  I(u,x) \leq I(v,x) + L(u-v).
\end{align*}
More generally, if $x\in M$, then we can choose a sequence of points $x_m$ with $x_m \to x$ and $x_m \in \bigcup G_n$. Then, for each $m$ there is some $L_{x_m}$ based at $x_m$ such that
\begin{align*}
  I(u,x_m) \leq I(v,x_m) +L_{x_m}(u-v)
\end{align*}
Once again, passing to the limit in $m$ (and using again the compactness of $\K_{Levy}(I)$), and using the continuity of $I(u,\cdot)$ and $I(v,\cdot)$, we conclude that there exists some $L\in \K_{Levy}(I)$, based at $x$, and such that
\begin{align*}
  I(u,x) \leq I(v,x) +L(u-v).
\end{align*}
Finally, we need to extend \eqref{eqMMonM:ThisIsWhereYouWin} to all $u,v\in C^\beta_b(M)$, and not just those with compact support. Fix $u,v \in C^\beta_b(M)$, and $x\in M$. Consider sequences $u_k,v_k \in C^\beta_c(M), k\in\mathbb{N}$, which are such that
\begin{align*}
  \|u_k-u\|_{C^\beta(B_{2k}(x_*))} \leq 1/k,\;\;\; \|v_k-v\|_{C^\beta(B_{k}(x_*))} \leq 1/k.  
\end{align*}
Then, for each $k$ we have some $L_k \in \K_{Levy}(I)$ such that
\begin{align*}
  I(u_k,x) \leq I(v_k,x) + L_{k}(u_k-v_k).
\end{align*}
The assumption \eqref{eqIntro:AssumptionQuatifiedLocalUnifConv1} and Lemma \ref{lem:DInSatisfiesExtraAssumptions} imply that for all sufficiently large $k$,
\begin{align*}
  I(u,x) & \leq I(u_k,x) + C\|u-u_k \|_{C^\beta(B_{2k}(x_*))}+C\|u-u_k\|_{L^\infty(M)},\\
  I(v_k,x) & \leq I(v,x) + C\|v-v_k \|_{C^\beta(B_{2k}(x_*))}+C\|v-v_k\|_{L^\infty(M)},\\
  L_{k}(u_k-v_k) & \leq L_{x,k}(u-v) +C\|u-v-(u_k-v_k)\|_{C^\beta(B_{2k}(x_*))} \\
  & \;\;\;\;+C\omega(k)\|u-v-(u_k-v_k)\|_{L^\infty(M)}
\end{align*}
Therefore, 
\begin{align*}
  I(u,x) \leq I(v,x) + L_{k}(u-v) + C\frac{1}{k}+C\omega(k),\;\;\forall\;k\in\mathbb{N}.
\end{align*}
Then, after possibly taking a subsequence of the $L_k$, we obtain \eqref{eqMMonM:ThisIsWhereYouWin} in the limit in this final case. Since \eqref{eqMMonM:ThisIsWhereYouWin} trivially yields equality for $v=u$, we conclude that for any $x\in M$
\begin{align*}
  I(u,x) = \min\limits_{v\in C_b^\beta(M)} \max \limits_{L \in \K_{Levy}(I)} \{  I(v,x) +  L(u-v)\},
\end{align*}
and this finishes the proof.

\end{proof}

The nature of the set $\K_{Levy}(I)$ and its dependence on $I$ is a direct and trivial outcome of the proof of Theorem \ref{thm: Min-Max Riemannian Manifold}, we record it as a Proposition.

\begin{prop}\label{prop:ReduceMinMaxToWeakLimitsOfDIn}
  The family $\K_{Levy}(I)$ appearing in Theorem \ref{thm: Min-Max Riemannian Manifold} has the form
  \begin{align*}
    \K_{Levy}(I) := &\hull\big( \big\{L:C^\beta_b \to \mathbb{R}\ : \exists \ n_k \to \infty \textnormal{ and } L_{n_k}\in \D I_{n_k},\\ 
		&   x_{k} \in G_{n_k} \text{s.t.}\ L(f) = \lim_{k\to\infty} L_{n_k}(f,x_k)\ \forall f\in C^\beta_c(M) \big\}   \big).
  \end{align*}
\end{prop}

Finally, we comment on the minor modifications needed to obtain the stronger min-max result, under assumption \eqref{eqIntro:Equicontinuity Assumption}.
\begin{proof}[Proof of Theorem \ref{thm: Min-Max Riemannian Manifold With Operators}]
  The proof is exactly as that of the previous Theorem, except we invoke Lemma \ref{lem:DInCompactness Strong} in place of Lemma \ref{lem:DInCompactness}, which is made possible once we have \eqref{eqIntro:Equicontinuity Assumption}. In this case, we obtain convergence as operators of subsequences of $L_n$, where $L_n\in \mathcal{D}I_n$ for every $n$. We define
  \begin{align*}
    \mathcal{L} := \{ L \mid \exists \{n_k\}_k, n_k\to \infty, \textnormal{ and } L_{n_k} \in \mathcal{D} I_{n_k} \textnormal{ such that } L(u,x) = \lim\limits_{k} L_{n_k}(u,x) \forall u\in C^3_c(M)\}. 
  \end{align*}	  
  The min-max formula using the operators in $\mathcal{L}$ is proved as before, and the fact that for each $x$ we have $L(\cdot,x)\in L_{Levy}(I)$ is immediate in light of Proposition \ref{prop:ReduceMinMaxToWeakLimitsOfDIn}.
  
\end{proof}

\begin{rem}
	In order to illustrate the difference between $I: C^\beta_b(M)\to C_b(M)$ and maps on a finite dimensional space, we point the reader to \eqref{eqMMonM:ThisIsWhereYouWin}.  If $I$ were differentiable on a dense set of functions, one can basically go straight to this point-- see e.g. Proposition \ref{prop: mean value theorem for Clarke's subdifferential} and the proof of Lemma \ref{lem:FiniteDimensionalMinMax}.  However, for generic Lipschitz $I$ in infinite dimensional spaces, Fr\'echet differentiability on a dense set is not expected to hold.  Thus, most of the difficulty was contained in obtaining \eqref{eqMMonM:ThisIsWhereYouWin}.
\end{rem}

\begin{rem}
	One may ask how it is that including such a large set of linear functionals centered at $x$ as $\K_{Levy}(I)$ in the max of the min-max formula \eqref{eqIntro:MinMax!} does not corrupt simpler operators that may not use all such linear functionals.  Suppose that $I$ is a simpler operator of the form
	\begin{align*}
		I(u,x) = \max\{L_a(u,x), L_b(u,x)\},
	\end{align*}
	where $L_a$ and $L_b$ are simply two fixed operators that have the GCP and properties \eqref{eqIntro:AssumptionQuatifiedLocalUnifConv1}.  The reader can check in a straightforward fashion that indeed
	\begin{align*}
		\min_{v\in C^\beta_b} \max_{L_x\in \K_{Levy}(I)} \left( I(v,x) + L_x(u-v)    \right)
		= \max\{L_a(u,x), L_b(u,x)\}.
	\end{align*}
	The main points are that choosing $v=u$ in the minimum immediately gives one inequality, and the reverse inequality comes from the fact that if $L_a$ and $L_b$ are linear maps from $C^\beta_b(M)\to C_b(M)$ with the GCP, then for $x$ fixed, the linear functionals $L_a(\cdot,x)$ and $L_b(\cdot,x)$ are both of L\'evy type, and hence in $\K_{Levy}(I)$.
\end{rem}


\subsection{Convex operators} If the Lipschitz operator $I$ is assumed to be \emph{convex}, then it may be represented simply as a maximum of linear operators of the same type as those appearing in the min-max formula from Theorem \ref{thm: Min-Max Riemannian Manifold}. First, let us recall what it means for an operator to be convex. 

\begin{DEF} An operator $I$ is said to be convex if for any two functions $u,v$, and $x \in M$, and any $\lambda \in (0,1)$ the following inequality holds
  \begin{align*}
    I(\lambda u+(1-\lambda)v,x)\leq \lambda I(u,x) +(1-\lambda)I(v,x).
  \end{align*}	
  The operator is said to be concave if the above inequality is reversed. 
\end{DEF}

The convexity condition can clearly be restated as
\begin{align*}
  t^{-1}\left ( I(v+t(u-v),x)-I(v,x) \right ) \leq I(u)-I(v)  \;\;\forall\;t\in[0,1].
\end{align*}
Taking $s\in [0,1]$ and applying the above inequality to the functions $v$ and $v+s(u-v)$, one sees that convexity of $I$ is equivalent to the condition
\begin{align*}
  t^{-1}\left ( I(v+t(u-v),x)-I(v,x) \right ) \leq s^{-1}\left ( I(v+s(u-v),x)-I(v,x) \right ),\;\;\forall\; 0\leq t\leq s\leq 1. 
\end{align*}

\begin{lem}\label{lem: max formula convex operator} Let $M$ and $I$ be as in Theorem \ref{thm: Min-Max Riemannian Manifold}. If in addition, $I$ is known to be convex, then
  \begin{align*}
    I(u,x) = \max_{v,L_x} \{ I(v,x)+L_x(u-v)\}. 	  
  \end{align*}	  
  Here the maximum is over some family of pairs $(v,L_x)$ where $v\in C^\beta_b$, and each $L_x$ lies in the same family of functionals as in Theorem \ref{thm: Min-Max Riemannian Manifold}. Likewise, if $I$ is concave, an analogous statement holds with a minimum instead of a maximum.
\end{lem}
\begin{proof}
  Let $I_n$ be the finite dimensional approximation to $I$. By its construction, it is clear that $I_n$ is convex if $I$ is convex. We shall show that the min-max formula for $I_n$ reduces to a max formula when $I_n$ is convex. From this point on, the proof of the Lemma follows the argument used to obtain \eqref{eqMMonM:ThisIsWhereYouWin} in the proof of Theorem \ref{thm: Min-Max Riemannian Manifold}.
  
  Fix $u,v \in C^\beta_b$ and $x \in \tilde G_n$ for some $n$. Assume further that $v$ is such that $I_n$ is differentiable at $v$, with derivative $L_v(\cdot)$. Then, due to the convexity of $I_n$, the function
  \begin{align*}	
    t \to t^{-1}\left ( I_n(v+t(u-v),x)-I(v,x) \right ),	
  \end{align*}	
  is nondecreasing for $t>0$. Therefore, 
  \begin{align*}
    t^{-1}\left ( I_n(v+t(u-v),x)-I(v,x) \right ) & \geq \limsup \limits_{t\to 0^+} \{ t^{-1}\left ( I_n(v+t(u-v),x)-I_n(v,x)  \right )\}\\
      & = L_v(u-v,x).	
  \end{align*}
  In particular, for $t=1$
  \begin{align*}
    I_n(u,x) \geq I(v,x)+L_v(u-v,x),\;\;\forall\;x\in \tilde G_n,\;u\in C^\beta_b(M).
  \end{align*}	  	
  If $I_n$ is not differentiable at $v$, we take a sequence $v_k \to v$ with $I_n$ differentiable at each $v_k$. Then,
  \begin{align*}
    I_n(u,x)\geq I_n(v_k,x)+L_{v_k}(u-v_k,x),\;\;\forall\;k,
  \end{align*}
  passing to the limit $k \to \infty$ 
  \begin{align*}
    I_n(u,x) & \geq I_n(v,x)+\limsup \limits_{k}L_{v_k}(u-v_k,x).
  \end{align*}
  From here, it follows that for every $v\in C^\beta_b$ there is some $L$ such that
  \begin{align*}
    I_n(u,x)\geq I_n(v,x)+L_x(u-v).
  \end{align*}
  Since $v$ is arbitrary and the above becomes an equality whenever $u=v$, it follows that we have
  \begin{align*}
    I_n(u,x) = \max \limits_{v,L} \{ I_n(v,x)+L(u-v,x)\},
  \end{align*}	  	
  the maximum being over some family of pairs $(v,L)$. This proves the maximum for each of the finite dimensional approximations $I_n$. As mentioned at the beginning of the proof, to obtain the maximum formula for $I$, one proceeds by the same limiting argument used in the proof of Theorem \ref{thm: Min-Max Riemannian Manifold}, we leave the details to the reader. 
\end{proof}


\subsection{Extremal operators}\label{sec:ExtremalOperators}

An elementary consequence of the min-max formula for $I$, is that one can bound the difference $I(u,x)-I(v,x)$ via ``extremal operators''. Namely, since
  \begin{align*}	
    I(u,v)-I(v,x) & = \min \limits_{v'}\max\limits_{L} \left \{ I(v',x)+L(u-v',x)\right \} -  I(v,x),\\
	 \textnormal{(take } v'=v \textnormal{)} & \leq \max\limits_{L} \left \{ L(u-v,x)\right\}.	
  \end{align*}	
  Likewise,
  \begin{align*}
    I(u,x)-I(v,x) & \geq -\max\limits_{L} \left \{ L(v-u,x)\right \},\\
	  & \geq \min \limits_{L} \left \{ L(u-v,x)\right \}.
  \end{align*}	  
  Therefore, we call the following the extremal inequalities for $I$:
  \begin{align}\label{eqMMonM:ExtremalInequality}
    \min\limits_{L} \{L(u-v,x)\}\leq I(u,x)-I(v,x) \leq \max \limits_{L} \{L(u-v,x)\};	
  \end{align}	
and given a family of linear functionals, $\mathcal{L}$, we define the extremal operators
  \begin{align}\label{eqMMonM:ExtremalOpDef}
    M^+_{\mathcal{L}}(u,x) = \sup\limits_{L\in\mathcal{L}} \{ L(u,x)\}
	\ \ \text{and}\ \ 
    M^-_{\mathcal{L}}(u,x) = \inf\limits_{L\in\mathcal{L}} \{ L(u,x)\}.
  \end{align}
  Note, these extremal operators have made important appearances in PDE and control theory for decades (and most likely in other fields).  For second order equations they can be traced back to Pucci \cite{Pucci-1965SuLeEquazioniEstremanti}, see also their importance in Caffarelli \cite{Caff-1989InteriorEstimates} or in the book of Caffarelli-Cabr\'e \cite[Chp 2-4]{CaCa-95}.  They also play a fundamental role in much of the theory for integro-differential equations for both linear and nonlinear operators (a very abridged list is e.g. \cite{CaSi-09RegularityIntegroDiff},  \cite{ChDa-2012NonsymKernels}, \cite{ChDa-2012RegNonlocalParabolicCalcVar}, \cite{KassRangSchwa-2013RegularityDirectionalINDIANA}, \cite{SchwabSilvestre-2014RegularityIntDiffVeryIrregKernelsAPDE}, etc...).

  Now, one may ask if the reverse holds. Namely, if the \eqref{eqMMonM:ExtremalInequality} holds, does it follow that $I$ can be written as a min-max of operators belonging to the class $\mathcal{L}$?. The next lemma gives a partial answer to this question --which will be useful in a forthcoming work dealing with Dirichlet to Neumann maps. 

\begin{prop}\label{prop:ExtremalBoundsTheOperatorsInMinMax}
	Assume that $I$ is as in Theorem \ref{thm: Min-Max Riemannian Manifold}, and suppose further that there exists a class of functionals $\L$, so that $I$ obeys the extremal inequalities \eqref{eqMMonM:ExtremalInequality} with respect to $\L$.  Then, with $\K_{Levy}(I)$ as in Proposition \ref{prop:ReduceMinMaxToWeakLimitsOfDIn}, it holds that for all $L_x\in\K_{Levy}(I)$
	\begin{align*}
		\forall\ \phi\in C^3_c(M),\ \forall\ x\in M,\ \ M^-_\L(\phi,x)\leq  L_x(\phi).
	\end{align*}
\end{prop} 

We will prove this proposition via two more basic (and possibly also useful) facts separately, where both of them invoke the finite dimensional operators. 
  
\begin{lem}\label{lem:ApproximateExtremalInequality}
	Let $I$ and $\L$ be as in Proposition \ref{prop:ReduceMinMaxToWeakLimitsOfDIn}. Let $I_n$, $(M^-_\L)_n$, and $(M^+_\L)_n$ be the finite dimensional approximations defined in \eqref{eqMMonM:InDef} for respectively $I$, $M^-_\L$, and $M^+_\L$ from \eqref{eqMMonM:ExtremalOpDef}, and let $u,v\in C^\beta_b(M)$.  Then
	\begin{align*}
		(M^-_\L)_n(u-v)\leq \I_n(u)-\I_n(v) \leq (M^+_\L)_n(u-v),
	\end{align*}
	i.e. the approximation \eqref{eqMMonM:InDef} preserves extremal inequalities.
\end{lem}

\begin{lem}\label{lem:ApproxLowerBoundOnFrechetDeriv}
	Let $I$, $\L$, and $M^-_\L$ be as in Proposition \ref{prop:ExtremalBoundsTheOperatorsInMinMax}.  Let $n$ be fixed, let $I_n$ be defined in \eqref{eqMMonM:InDef}, and assume $I_n$ is Fr\'echet differentiable at $u\in C^\beta_b(M)$ with derivative $D\I_{n,u}$, let $x\in M$, and let $\phi\in C^3_b(M)$.  Then the following estimate is true
	\begin{align*}
		-h_n^\gam \norm{\phi}_{C^3} + M^-_\L(\phi,x)\leq D I_{n,u}(\phi,x),
	\end{align*}
	where $h_n^\gam\to0$ arises from Lemma \ref{lem:EnTnEstToUInC3} and is defined in \eqref{eqFinDimApp: hn def}.
\end{lem}

For notational reasons, it will be easiest to simply present the proofs of Lemmas \ref{lem:ApproximateExtremalInequality} and \ref{lem:ApproximateExtremalInequality} together.

\begin{proof}[Proof of Lemmas \ref{lem:ApproximateExtremalInequality} and \ref{lem:ApproxLowerBoundOnFrechetDeriv}]
	First, let $u,v\in C^\beta_b(M)$.  We will use the fact that restriction/extension compositions
	\begin{align*}
		E^\beta\circ T_n\ \text{and}\ E^0\circ T_n
	\end{align*}
	are both linear operators, and furthermore that $E^0\circ T_n$ preserves ordering.  Using the extremal inequality of  \eqref{eqMMonM:ExtremalOpDef}, we see that since $E^\beta_n\circ T_n u$ and $E^\beta_n\circ T_n v$ are again in $C^\beta_b(M)$, it holds that
	\begin{align*}
		M^-_\L(E^\beta_n\circ T_n (u-v))\leq I(E^\beta_n\circ T_n u)-I(E^\beta_n\circ T_n v),
	\end{align*}
	(and we have used linearity of $E^\beta_n\circ T_n$ on the left).  Now we may apply $E^0\circ T_n$ to both sides, and we use the monotonicity and linearity to conclude 
	\begin{align*}
		E^0\circ T_n \left(M^-_\L(E^\beta_n\circ T_n (u-v))\right)\leq E^0\circ T_n\left(I(E^\beta_n\circ T_n u)\right)-E^0\circ T_n\left(I(E^\beta_n\circ T_n v)\right).
	\end{align*}
	Hence by the definition of $(M^-_\L)_n$ and $I_n$ in \eqref{eqMMonM:InDef}, we have obtained half of Lemma \ref{lem:ApproximateExtremalInequality}.  The other inequality follows the same proof.
	
	Now to obtain the estimate on $D I_n$, let $t>0$, and $u$ and $\phi$ be as in Lemma \ref{lem:ApproxLowerBoundOnFrechetDeriv}.  In the preceding equation, we may now replace $u$ by $u+t\phi$ and $v$ by $u$.  Invoking the positive $1$-homogeneity of $M^-_\L$ and $(M^-_\L)_n$, we obtain
		\begin{align*}
			t(M^-_\L)_n(\phi)\leq I_n(u + t\phi)-I_n( u).
		\end{align*}
		Now we can invoke the approximation estimate in Lemma \ref{lem:EnTnEstToUInC3} applied to $(M^-_\L)_n$, and rearrange to see that
		\begin{align*}
			-h_n\norm{\phi}_{C^3}+M^-_\L(\phi,x)\leq (M^-_\L)_n(\phi,x)\leq \frac{1}{t}\left(I_n(u + t\phi,x)-I_n( u,x)\right).
		\end{align*}
		Hence, taking the limit as $t\to0^+$, we conclude Lemma \ref{lem:ApproxLowerBoundOnFrechetDeriv}.
\end{proof}

Now we justify Proposition \ref{prop:ExtremalBoundsTheOperatorsInMinMax}.

\begin{proof}[Proof of Proposition \ref{prop:ExtremalBoundsTheOperatorsInMinMax}]
	
	Let $\phi$ and $x$ be given.  By the definition of $L_x\in \K_{Levy}(I)$ via Proposition \ref{prop:ReduceMinMaxToWeakLimitsOfDIn}, we see that $L_x$ is a limit of convex combinations of operators, $\tilde L$ such that there exist $u_n$ and $x_k$ so that 
	\begin{align*}
		\tilde L(\phi,x) = \lim_{k\to\infty}\lim_{n\to\infty} DI_{n,u_n}(\phi,x_k).
	\end{align*}
	As Lemma \ref{lem:ApproxLowerBoundOnFrechetDeriv} is independent of $u_n$, and from the fact that $M^-_\L(\phi,\cdot)$ is continuous in $x$,  we see that 
	\begin{align*}
		M^-_\L(\phi,x)\leq \tilde L(\phi,x).
	\end{align*}
	This inequality is preserved under further convex combinations over $\tilde L$, and thus we conclude it also holds that
	\begin{align*}
		M^-_\L(\phi,x)\leq L_x(\phi).
	\end{align*}
\end{proof}

\section{Some Questions}\label{sec: Further Questions}
\setcounter{equation}{0}

Here we take the time to mention some additional questions that arise from the min-max representation.   
\begin{question}
  In the Introduction, among the examples for maps satisfying the GCP, we mentioned the Dirichlet-to-Neumann map for a fully nonlinear equation in a bounded smooth domain $\Omega \subset \mathbb{R}^d$. Our main theorem yields the representation
  \begin{align*}
    \partial_\nu U = \min\limits_{a}\max \limits_{b} \left \{ f^{ab}(x)+L_{ab}(u,x) )  \right \},
  \end{align*}
  where $\{f^{ab}\}_{ab}$ is a bounded family of functions in $C(\partial \Omega)$, and each $L_{ab}(\cdot,x)$ has the form \eqref{eqIntro:LevyTypeDef}. Then, we ask: are the Levy measures $\mu^{ab}$ appearing in the min-max formula formula absolutely continuous with respect to the surface measure of $\partial \Omega$?. In other words, find out whether there are measurable functions $k^{ab}: \Omega \times \Omega \to \mathbb{R}$ such that
  \begin{align*}
    \mu^{ab}_x(dy) = k^{ab}(x,y)d\textnormal{vol}_g(y).
  \end{align*}
  Furthermore, deriving further properties for the kernels $k^{ab}$, such as pointwise bounds with respect to the  kernel $|x-y|^{-d}$, would be very useful. Such bounds would mean that the equation $\partial_\nu U=0$ is closely related to existing regularity results for nonlocal elliptic equations, i.e. \cite{BaLe-2002TransitionProb,CaSi-09RegularityIntegroDiff,ChDa-2012NonsymKernels,SchwabSilvestre-2014RegularityIntDiffVeryIrregKernelsAPDE}.
\end{question}

\begin{question}
  Going in the opposite direction, are there Dirichlet to Neumann maps --even in the linear case-- for which the resulting integro-differential operator on $\partial\Om$ that has a singular L\'evy measure?  This seems a possibility for linear operators with low-regularity coefficients, as suggested by the existence of well known examples of elliptic operators for which the associated $L$-harmonic measure is singular.
\end{question}

\begin{question}
  Let $M=\mathbb{R}^d$. If it is assumed that $I$ is a translation invariant operator, can you show that it suffices to only use translation invariant linear operators in the min-max formula of Theorem \ref{thm: Min-Max Riemannian Manifold}?.
\end{question}

\begin{question}
	Can the min-max formula be extended to degenerate or singular operators such as the infinity-Laplace or the p-Laplace?  These operators are not bounded from $C^2\to C$, but nonetheless they enjoy good existence / uniqueness and partial regularity theory for weak solutions of equations defined by them.
\end{question}

\begin{question}
	The axiomatic image processing work of Alvarez-Guichard-Lions-Morel \cite{AlvarezLionsGuichardMorel-1993AxiomsImageProARMA} showed that if a semi-group on the space of continuous functions satisfies certain axioms, most notably locality and comparison, then in fact the semi-group must be characterized as the (viscosity) solution operator for some fully nonlinear (degenerate) parabolic equation.  This is notable because one recovers a representation using weak solutions. Is it possible to make an analog of the paper \cite{AlvarezLionsGuichardMorel-1993AxiomsImageProARMA} to the context of Theorem \ref{thm: Min-Max Riemannian Manifold} presented here?  This would be an extension of Theorem \ref{thm: Min-Max Riemannian Manifold} to both the parabolic setting and the setting of weak solutions.
\end{question}

\appendix

\section{Discretization of the gradient and the Hessian on $M$}

First off, we shall construct proper discretizations for the covariant gradient and Hessian given $M$ and $\tilde G_n$. Our point of view will be to think of a sufficiently smooth function $u:M\to\mathbb{R}$ as given. Then, the discrete gradient and Hessian of $u$ will be defined at points in $\tilde G_n$ using only the values of $u$ at points in $\tilde G_n$. We will see that the regularity of the original function $u$ will control how far are these discrete operators from their continuum counterparts (Lemma \ref{lem:InterpolatedDerivativesAreCloseToRealDerivatives}). Moreover, the regularity of $u$ will control the regularity of discrete gradient and Hessian themselves, in a manner which is independent of the mesh size (Proposition \ref{prop: discrete derivatives are controlled by continuous derivatives} and \ref{prop:InterpolatedDerivativesAreRegular}).

\begin{rem} Before proceeding further, it is worthwhile to note that the discrete gradient and Hessian defined below are standard, and that this appendix has been made with the chief purpose of making the paper as self contained as possible. In fact, as with the discussion of Whitney extension, we failed to find a direct reference where the discretization of the gradient and Hessian is done in the context of a Riemannian manifold. Furthermore, for the purposes of this paper, we only need rather minimal properties of our discretization --essentially, their ``consistency''. As such, the arguments and estimates here are far less optimal than what may be found in the numerical analysis literature where subtler issues are considered.\end{rem}

As we can only use the values of $u$ at points of $\tilde G_n$, our first order of business is to single out admissible directions at $x\in \tilde G_n$ along which a (discrete) derivative may be computed. This is done in the following proposition.

\begin{prop}\label{prop:Appendix Admissible Directions}
  Given $x \in \tilde G_n$  	there are vectors
  \begin{align*} 
    V_{n,1}(x),\ldots,V_{n,d}(x) \in (TM)_x. 
  \end{align*}
  Satisfying the following properties,
  \begin{enumerate}
    \item For each $k$,
    \begin{align*}
      \exp_{x}(V_{n,k}(x)) \in \tilde G_n.		
    \end{align*}		
    \item Also for each $k$,
    \begin{align*}
       98 \tilde h_n \leq |V_{n,k}(x)|_{g_x} \leq 102 \tilde h_n.
    \end{align*}	
			  
    \item Finally, the family $\{V_{n,k}\}_{k=1}^{d}$ forms a basis which is ``almost orthogonal''. To be concrete, for sufficiently large $n$, we have
    \begin{align*}
      |(\hat V_{n,l}(x),\hat V_{n,k}(x))_{g_x}|\leq \frac{1}{20},\;\;\textnormal{ if } k\neq l.
    \end{align*}
    Here, $\hat V$ denotes the unit vector in the direction of $V$, that is $\hat V:= V/|V|_{g_x}$. 	
		
  \end{enumerate}
\end{prop}

\begin{proof}
  Let us recall the constant $\delta\in(0,1)$ introduced in Remark \ref{rem: good balls cover M}, as well as $\tilde h_n$ (see  \eqref{eqFinDimApp: tilde hn def}) which was given by
  \begin{align*}
    \tilde h_n := \sup \limits_{x\in M} d(x,\tilde G_n),\;\;\forall\;n,
  \end{align*}
  and which is such that $\lim\limits_n \tilde h_n = 0$. Next, recall that by \eqref{eqFinDimApp: tilde hn def}, we have
  \begin{align*}
    500 \tilde h_n < \delta.
  \end{align*}
  Fix $x \in \tilde G_n$ and let $e_1,\ldots,e_d$ be an arbitrary orthonormal basis of $(TM)_x$. By definition of $\tilde h_n$, 
  \begin{align*}
    d(\exp_{x}(100\tilde h_ne_k),\tilde G_n)\leq \tilde h_n,\;\;k=1,\ldots,d.
  \end{align*}
  In particular, for each $x$ and each $k$, it is possible to pick a point $x_k$ such that
  \begin{align*}
    x_k \in \tilde G_n \textnormal{ and } d(\exp_{x}(100\tilde h_ne_k),x_k)\leq \tilde h_n.    	  
  \end{align*}
  Having made such a selection for each $x \in \tilde G_n$, we define
  \begin{align*}
    V_{n,k}(x) := (\exp_{x})^{-1}(x_k),\;\;\;k=1,\ldots,d.	  
  \end{align*}	  	  
  Thus, the first property holds by construction. Next, observe that since $100 \tilde h_n<\delta$, both $x_k,x$ and $\exp_{x}(100 \tilde h_n e_k)$ all lie in a ball of radius $4\delta\sqrt{d}$. Therefore, using Remark \ref{rem: good balls cover M} we can compare $|V_{n,k}(x)|_{g_x}$ and $|100\tilde h_n e_k|_{g_x}$.   
  In particular, we have
  \begin{align}\label{eqAppendix: difference between Vnk and 100ek}
    |V_{n,k}-100 \tilde h_n e_k|_{g_x} \leq \tfrac{101}{100} d(\exp_{x}(100\tilde h_ne_k),x_k) \leq \tfrac{101}{100} \tilde h_n.  
  \end{align}	
  Then, the triangle inequality yields,
  \begin{align*}
    |V_{n,k}(x)|_{g_x} & \leq |100\tilde h_n e_k|_{g_x} + |V_{n,k}(x)-100\tilde h_n e_k|_{g_x} \leq 100 \tilde h_n + \tfrac{101}{100} \tilde h_n, \leq 102 \tilde h_n,\\	  
    |V_{n,k}(x)|_{g_x} & \geq |100\tilde h_n e_k|_{g_x} -|V_{n,k}(x)-100\tilde h_n e_k|_{g_x} \geq 100 \tilde h_n - \tfrac{101}{100} \tilde h_n \geq 98 \tilde h_n.
  \end{align*}
  This proves the second property. It remains to prove the third one. For the sake of brevity, let us omit the $x$ dependence in the computations below. 
  
  Let us express the inner product $(V_{n,l},V_{n,k})_{g_x}$ in terms of the orthonormal basis $e_k$,
  \begin{align*}
    (V_{n,l},V_{n,k})_{g_x} & = (V_{n,l}-100\tilde h_ne_l+100\tilde h_ne_l,V_{n,k}-100\tilde h_ne_k+100\tilde h_ne_k)_{g_x}\\
	  & = (V_{n,l}-100\tilde h_ne_l,V_{n,k}-100\tilde h_ne_k+100\tilde h_ne_k)_{g_x}\\
	  & \;\;\;\;+(100\tilde h_ne_l,V_{n,k}-100\tilde h_ne_k+100\tilde h_ne_k)_{g_x}\\
	  & = (V_{n,l}-100\tilde h_ne_l,V_{n,k}-100\tilde h_ne_k)_{g_x}+(V_{n,l}-100\tilde h_ne_l,100\tilde h_ne_k)_{g_x}\\
	  & \;\;\;\;+(100\tilde h_ne_l,V_{n,k}-100\tilde h_ne_k)_{g_x}+(100\tilde h_ne_l,100\tilde h_ne_k)_{g_x}.	  
  \end{align*} 
  Since the $e_k$ are orthonormal, for $k\neq l$ it follows that 
  \begin{align*}
    (V_{n,l},V_{n,k})_{g_x} & = (V_{n,l}-100\tilde h_ne_l,V_{n,k}-100\tilde h_ne_k)_{g_x}+(V_{n,l}-100\tilde h_ne_l,100\tilde h_ne_k)_{g_x}\\
	  & \;\;\;\;+(100\tilde h_ne_l,V_{n,k}-100\tilde h_ne_k)_{g_x},\;\; k\neq l.
  \end{align*}    
  We apply the estimate \eqref{eqAppendix: difference between Vnk and 100ek} to this last identity, it follows that
  \begin{align*}
    |(V_{n,l},V_{n,k})_{g_x}| & \leq | V_{n,l}-100\tilde h_ne_l |_{g_x} |V_{n,k}-100\tilde h_ne_k|{g_x}+|V_{n,l}-100\tilde h_ne_l|_{g_x} |100\tilde h_ne_k|_{g_x}\\
	  & \;\;\;\;+|100\tilde h_ne_l|_{g_x}|V_{n,k}-100\tilde h_ne_k|_{g_x} \\
	  & \leq (\tfrac{101}{100})^2\tilde h_n^2+ 2(\tfrac{101}{100}\tilde h_n)(100 \tilde h_n) \leq 204\tilde h_n.
  \end{align*}  
  Since $|V_{n,l}|_{g_x}^{-1}\geq 98 \tilde h_n$, it follows that
  \begin{align*}
    |(\hat V_{n,l},\hat V_{n,k})_{g_x}| & \leq 204 \tilde h_n^2  |V_{n,l}|_{g_x}^{-1}|V_{n,k}|_{g_x}^{-1} \leq 204 (98)^{-2}\leq \tfrac{1}{20},
  \end{align*}
  and the third property is proved. 
\end{proof}

From here on, for each $n$ and for every $x\in \tilde G_n$, we fix a selection of vectors $\{V_{n,1}(x),\ldots,V_{n,d}(x)\} \in (TM)_x$ as in the previous proposition. Moreover, we fix $u \in C^\beta_b(M)$ for the rest of this section.
\begin{DEF}\label{def:Appendix Discrete Gradient} (Discrete gradient)
  Given $x\in \tilde G_n$ and $u$, define $(\nabla_n)^1u(x) \in (TM)_x$ by solving the system of linear equations
  \begin{align*}
     (V_{n,k}(x),(\nabla_n)^1u(x))_{g_x} = u(\exp_{x}(V_{n,k}(x)))-u(x),\;\;\;\;k=1,\ldots,d.       
  \end{align*}
  Note that, as the $V_{n,k}(x)$ are linearly independent, the above system always has a unique solution.
\end{DEF}

\begin{rem}\label{remAppendix: Discrete Gradient Eucliean Case}
  Let us illustrate the above definition in a simple case. Let us take,
  \begin{align*}
    M=\mathbb{R}^d,\;\;\; \tilde G_n = (2^{-n}\mathbb{Z}^d),
  \end{align*}	
  and write $\tilde h_n = 2^{-n}$ and $V_{n,l}(x) = h_ne_l$, where $\{e_1,\ldots,e_d\}$ denote the standard orthonormal basis of $\mathbb{R}^d$. Then,
  \begin{align*}
    u(x+\tilde h_ne_k)-u(x) & = u(\exp_{x}(V_{n,k}(x)))-u(x)\\
      & = \sum \limits_{l=1}^{d} (\nabla_n)_l^1u(x) (V_{n,k}(x),\hat V_{n,l}(x))_{g_x}\\
      & = (\nabla_n)^1_ku(x) \tilde h_n.   
  \end{align*}
  Thus, in this case we have
  \begin{align*}
    (\nabla_n)^1_ku(x) = \frac{u(x+\tilde h_ne_k)-u(x)}{\tilde h_n} (\approx \partial_{x_k}u(x)),	  
  \end{align*}	  
  and the vector $(\nabla_n)^1u(x)$ is nothing but a discretization of the gradient.
\end{rem}

\begin{DEF}\label{def:Appendix Parallel Transport}
  Let $x,y \in M$ be such that $d(x,y)<r_0$. Then let $\Gamma_{x,y}$ denote the linear map
  \begin{align*}
    \Gamma_{x,y}: (TM)_y \to (TM)_x,
  \end{align*}
  given by parallel transport along the unique minimal geodesic connecting $x$ to $y$. We should recall this map is an isometry with respect to the inner products $g_x$ and $g_{y}$. If the point $y$ is understood from context, we shall simply write $\Gamma_x$.
  
\end{DEF}

\begin{DEF}  Let $V$ be a section of the tangent bundle $TM$. We say $V$ is of class $C^\alpha$ if
  \begin{align*} 
    [V]_{C^\alpha(M)} :=\sup \limits_{0<d(x,y)<r_0}\frac{|V(x)-\Gamma_{x,y}V(y)|_{g_x}}{d(x,y)^\alpha} < \infty
  \end{align*}
  Likewise, if $M:TM\to TM$, then
  \begin{align*}
    [M]_{C^\alpha(M)} := \sup \limits_{0<d(x,y)<r_0}\frac{|M(x)-M(y)\Gamma_{x,y}^{-1}|_{g_x}}{d(x,y)^\alpha}<\infty.
  \end{align*}
  These seminorms, when applied to $V=\nabla u$ and $M=\nabla^2u$ allows to define the $C^\beta$ norm of $u$ in the obvious manner.
\end{DEF}

\begin{rem}\label{rem: Holder regularity gradient and Hessian} Let $\beta \in [0,3)$ be given. The following is a useful characterization of H\"older continuity that will be used later on.  Let $x(t)$ denote a geodesic and $e(t)$ a parallel vector field along it with $|\dot x(t)|_{g_{x(t)}}=|e(t)|_{x(t)}=1$. Then,
\begin{align*}
  |(\nabla u(x(t)),e(t))_{x(t)}-(\nabla u(x(s)),e(s))_{x(s)}|\leq \|u\|_{C^\beta}|t-s|^{\min\{\beta-1,1\}},\;\;\textnormal{ if } \beta\geq 1,
\end{align*}
and
\begin{align*}
  |(\nabla^2 u(x(t))e(t),e(t))_{x(t)}-(\nabla^2 u(x(s))e(s),e(s))_{x(s)}|\leq \|u\|_{C^\beta}|t-s|^{\min\{\beta-2,1\}},\;\;\textnormal{ if } \beta \geq 2.
\end{align*}

\end{rem}

Defining the discrete Hessian requires further preparation, we define first the following ``second order difference'',
\begin{align*}
  \delta u_{x}(V_1,V_2) := u(\exp_{\exp_x(V_1)}(V_2))-u(\exp_x(V_1))-u(\exp_{x}(\Gamma_{x}V_2))+u(x).
\end{align*}
Here $\Gamma_{x}$ denotes the operation of parallel transport, as introduced in Definition \ref{def:Appendix Parallel Transport}.

\begin{DEF}\label{def:Appendix Discrete Hessian} (Discrete Hessian)
  Given $x\in \tilde G_n$ and $u$, we will define a linear transformation
  \begin{align*}
   (\nabla_n)^2u(x): (TM)_x \to (TM)_x.
  \end{align*}
  Given $k=1,\ldots,d$, define $(\nabla_n)^2u(x)V_{n,k}(x) \in (TM)_x$ as the solution $V$ to the linear system
  \begin{align*}
    (V,\Gamma_{x}V_{n,l}(x_k))_{g_x} = \delta u_{x}(V_{n,k}(x),V_{n,l}(x_k)),\;\;l=1,\ldots,d.
  \end{align*}	       
  Here, for the sake of brevity of notation, we have written
  \begin{align*}
    x_{k} = \exp_{x}(V_{n,k}(x)).	  
  \end{align*}	   
  Having indicated how $(\nabla_n)^2u(x)$ acts on the basis $\{V_{n,k}(x)\}_{k=1}^d$ of $(TM)_x$, the linear transformation is completely determined.
  
\end{DEF}

Let us elaborate on the linear algebra problem that was used to define $(\nabla_n)^2u$. Given a linear transformation $D:(TM)_x\to (TM)_x$, and a family of pairs of vectors $\{(V_k,W_k)\}_{k=1}^N$ for some $N$, we seek to recover the full matrix $D$ from the values
\begin{align*}
  (DV_k,W_k).
\end{align*}
We are given a basis $V_k$ ($k=1,\ldots,d$), and for each $k$ another basis $\{W_{k,l}\}$ ($l=1,\ldots,d$). Then, we seek to completely determine a linear transformation $M$ given the values
\begin{align*}
  (DV_k,W_{k,l}), \;\textnormal{ for } k,l=1,\ldots,d.  
\end{align*}

\begin{rem}
  Let us again see what this definition says in a simple case. Let $M, \tilde G_n, \tilde h_n$ and $\{V_{n,k}(x)\}$ be as in Remark \ref{remAppendix: Discrete Gradient Eucliean Case}. Then, given $x\in \tilde G_n$ and $k,l=1,\ldots,d$ we have
  \begin{align*}
    \delta u_x(V_{n,k}(x),V_{n,l}(x_k)) & = u(x+2^{-n}e_k+2^{-n}e_l)-u(x+2^{-n}e_k)-u(x+2^{-n}e_l)+u(x)\\
      & =  2^{-n}2^{-n}( (\nabla_n)^2u(x)e_k,e_l).
  \end{align*}
  It follows that the components of $(\nabla_n)^2u(x)$ are given by
  \begin{align*}
    (\nabla_n)_{kl}^2u(x) = \frac{u(x+2^{-n}e_k+2^{-n}e_l)-u(x+2^{-n}e_k)-u(x+2^{-n}e_l)+u(x)}{2^{-n}2^{-n}} (\approx \nabla^2_{kl}u(x) )  
  \end{align*}
  and the matrix $(\nabla_n)_{kl}^2u(x)$ is nothing but a discretization of the standard Hessian.

\end{rem}

\begin{rem}\label{rem:Appendix Locality of Discrete Derivatives}
  Let $x \in \tilde G_n$. Using the upper bound in part (2) of Proposition \ref{prop:Appendix Admissible Directions}, one notes that all the values of $u$ taken in evaluating $\nabla_n^1 u(x)$ and $\nabla_n^2u(x)$ lie within a ball of radius $<250\tilde h_n$ centered at $x$. In particular,  if $u\equiv 0$ in $B_{250h_n}(x)$, then
  \begin{align*}
    \nabla_n^1u(x) = 0,\;\;\nabla_n^2u(x) = 0.	  	   
  \end{align*}	  

\end{rem}  
  
The previous remark guarantees that the extension operator is somewhat ``local'', the locality becoming more and more exact as $n$ becomes larger, this is made rigorous in the following proposition. 
\begin{prop}\label{prop:Appendix Estimate on Locality of the Extension}
  Let $u \in C^\beta$, and $x_0 \in M$. Then,
  \begin{align*}
    u \equiv 0 \textnormal{ in } B_{400\tilde h_n}(x_0) \Rightarrow E_n^\beta (u,\cdot) \equiv 0 \textnormal{ in } B_{100\tilde h_n}(x_0). 
  \end{align*}
  
\end{prop}

\begin{proof}  
  First, we claim that
  \begin{align}
    x \in B_{100\tilde h_n}(x_0) \Rightarrow B_{250 \tilde h_n}(\hat y_{n,k}) \subset B_{400 \tilde h_n}(x_0),\;\;\forall\; k\in K_x. \label{eqn:Appendix Balls centered in hat Ynk lie close together}
  \end{align}
  Let us see how \eqref{eqn:Appendix Balls centered in hat Ynk lie close together} implies the proposition. Fix $x \in B_{100 \tilde h_n}(x_0)$, with $x\in M\setminus \tilde G_n$, then
  \begin{align*}
    E_n^\beta (u,x) & = \sum\limits_{k} p_{(u,k)}^\beta(x)\phi_{n,k}(x) = \sum\limits_{k\in K_x} p_{(u,k)}^\beta(x)\phi_{n,k}(x).
  \end{align*}
  Then, thanks to \eqref{eqn:Appendix Balls centered in hat Ynk lie close together}, we have that 
  \begin{align*}
    u \equiv 0 \textnormal{ in } B_{250 \tilde h_n}(\hat y_{n,k}),\;\;\forall\;k\in K_x,\;\forall\; x\in B_{100\tilde h_n}(x_0).
  \end{align*}
  In this case, Remark \ref{rem:Appendix Locality of Discrete Derivatives} guarantees that
  \begin{align*}
    p_{(u,k)}^\beta(x)\equiv 0,\;\;\forall\;k\in K_x,\;\forall\;x\in B_{100\tilde h_n}(x_0).
  \end{align*}
  In other words, 
  \begin{align*}
    E_n^\beta (u,x) =0,\;\;\forall\;x\in B_{100\tilde h_n}(x_0).
  \end{align*}	
  Which proves the proposition. It remains to prove \eqref{eqn:Appendix Balls centered in hat Ynk lie close together}. Fix $x \in B_{100 \tilde h_n}(x_0)$ and $k\in K_x$.   By the triangle inequality, and the definition of $\hat y_{n,k}$, we have
  \begin{align*}
    d(x,\hat y_{n,k}) \leq d(x,y_{n,k})+d(\hat y_{n,k},y_{n,k}) & = d(x,y_{n,k})+d(y_{n,k},\tilde G_n)\\
	  & \leq 2d(x,y_{n,k})+d(x,\tilde G_n)\\
	  & \leq 2\diam(P_{n,k}^*)+d(x,\tilde G_n).
  \end{align*}
  Then, thanks to Remark \ref{rem: Whitney local diameter of balls}, 
  \begin{align*}
    d(x,\hat y_{n,k}) \leq 15 d(x,\tilde G_n)\leq 15 \tilde h_n,\;\;\forall\;k\in K_x.	  
  \end{align*}	  
  Furthermore,
  \begin{align*}
    d(\hat y_{n,k},x_0) & \leq d(\hat y_{n,k},x)+d(x,x_0)\\
	  & \leq d(\hat y_{n,k},\hat x)+d(x,\hat x)+d(x,x_0).
  \end{align*}
  We now recall that $d(x,\hat x) = d(x,\tilde G_n) \leq \tilde h_n$, and $d(x,x_0)\leq 100\tilde h_n$. Furthermore, as shown in \eqref{eqFinDim:hat x and hat Ynk distance is bounded by distance to tilde Gn} in the proof of Proposition  we have $d(\hat y_{n,k},\hat x) \leq 16 d(x,\tilde G_n)$ for $k\in K_x$. Gathering these inequalities it follows that
  \begin{align*}
    d(\hat y_{n,k},x_0)  \leq 117 \tilde h_n,\;\;\forall\;k\in K_x.
  \end{align*}   
  From here, and the triangle inequality, we conclude that $B_{250 \tilde h_n}(\hat y_{n,k})$ lies inside $B_{400\tilde h_n}(x_0)$, that is, \eqref{eqn:Appendix Balls centered in hat Ynk lie close together}. This proves the proposition.
\end{proof}

In what follows, we will be using the functions $l$ and $q$, introduced in Definition \ref{def:LinearAndQuadPolynomials}. In $\mathbb{R}^d$ this is a completely straightforward calculation using the Taylor polynomial.  On a Riemannian manifold, we shall use the coordinates given by the exponential map. For the next proposition, we recall that the functions ``linear'' and ``quadratic'' functions $l$ and $q$ introduced in Definition \ref{def:LinearAndQuadPolynomials} are defined in a ball of of radius $4\delta \sqrt{d}$ around their base point, where $\delta$ is as in Remark \ref{rem: good balls cover M}

\begin{prop}\label{prop:Appendix Cbeta implies bound on Taylor remainder}
  Let $x_0,x \in M$ with $d(x,x_0)\leq 4\delta\sqrt{d}$, and $u \in C^\beta_b(M)$. Then,
  
  \noindent 1) If $C^{\beta}_b=C^1_b$, then
  \begin{align*}	
    u(x)-u(x_0)-l(\nabla u(x_0),x_0;x) = o(d(x,x_0)),
  \end{align*}

  where the $o(d(x,x_0))$ term is controlled by the modulus of continuity of $\nabla u$.
  
  \noindent 2) If $\beta \in [1,2]$, then
  \begin{align*}	
    |u(x)-u(x_0)-l(\nabla u(x_0),x_0;x)|\leq \|u\|_{C^\beta}d(x,x_0)^\beta.
  \end{align*}	  
  \noindent 3) If $C^\beta_b = C^2_b$, then
  \begin{align*}
    u(x)-u(x_0)-l(\nabla u(x_0),x_0;x)-q(\nabla^2u(x_0),x_0;x) = o(d(x,x_0)^2),
  \end{align*}

   where the $o(d(x,x_0))$ term is controlled by the modulus of continuity of $\nabla^2 u$.
 
  \noindent 4) If $\beta \in [2,3]$, then
  \begin{align*}	
    |u(x)-u(x_0)-l(\nabla u(x_0),x_0;x)-q(\nabla^2u(x_0),x_0;x)|\leq \|u\|_{C^\beta}d(x,x_0)^\beta.
  \end{align*}	  
\end{prop}

We omit the straightforward proof of Proposition \ref{prop:Appendix Cbeta implies bound on Taylor remainder}.

\begin{rem}\label{rem:ExpansionInEXPEst}
  From Definition \ref{def:LinearAndQuadPolynomials} it is immediate that Proposition \ref{prop:Appendix Cbeta implies bound on Taylor remainder} has the following equivalent formulation which will also be useful: given a unit vector $e \in (TM)_{x_0}$ and $h\leq 4\delta \sqrt{d}$, we have
  \begin{align*}
    & u(\exp_{x_0}(he))-u(x_0)-h(\nabla u(x_0),e)_{g_{x_0}} = o(h), \textnormal{ if } C^\beta_b = C^1_b,\\	  
    & |u(\exp_{x_0}(he))-u(x_0)-h(\nabla u(x_0),e)_{g_{x_0}}|\leq \|u\|_{C^\beta}h^{\beta}, \textnormal{ if } \beta \in (1,2],\\
    & u(\exp_{x_0}(he))-u(x_0)-h(\nabla u(x_0),e)_{g_{x_0}}-\frac{h^2}{2}(\nabla^2 u(x_0)e,e)_{g_{x_0}} = o(h^2), \textnormal{ if } C^\beta_b = C^2_b,\\	
    & |u(\exp_{x_0}(he))-u(x_0)-h(\nabla u(x_0),e)_{g_{x_0}}-\frac{h^2}{2}(\nabla^2 u(x_0)e,e)_{g_{x_0}}|\leq \|u\|_{C^\beta}h^{\beta}, \textnormal{ if } \beta \in [2,3].	
  \end{align*}
\end{rem}
 
\begin{proof}[Proof of Remark \ref{rem:ExpansionInEXPEst}]

  \emph{First estimate.} Fix a unit vector $e\in (TM)_{x_0}$. For $h \in [0,r_0]$ let $x(h) := \exp_{x_0}(he)$, and let 
  \begin{align*}
    \varepsilon(h) := u\left (x(h)\right )-u(x_0)-(\nabla u(x_0),he)_{g_{x_0}}.
  \end{align*}
  It is immediate that $\varepsilon(0)=0$, $d(x_0,x(h))=h$, and that
  \begin{align*}
    \varepsilon'(h) & = (\nabla u(x(h)),\dot x(h))_{g_{x(h)}}-(\nabla u(x_0),e)_{g_{x_0}}.
  \end{align*}
  Since $\dot x(0)= e$, we have $\varepsilon'(0) = 0$. Keeping in mind that $\dot x(h)$ is the parallel transport of $e$ along $x(h)$, the H\"older regularity of $\nabla u(x)$ yields
  \begin{align*}
    |\varepsilon'(h)| = |(\nabla u(x(h)),\dot x(h))_{x(h)}-(\nabla u(x_0),e)_{g_{x_0}}| & \leq \|u\|_{C^\beta}d(x_0,x(h))^{\beta-1}\\
	& = \|u\|_{C^\beta}h^{\beta-1}.
  \end{align*}
  Integrating this last inequality from $0$ to $h$, we obtain the first estimate, since
  \begin{align*}
    |\varepsilon(h)| = |\varepsilon(h)-\varepsilon(0)| \leq \|u\|_{C^\beta}h^{\beta}.
  \end{align*}
  \emph{Second estimate.} Let $x(h)$ be as before, with $h \in [0,r_0]$. This time we consider the function 
  \begin{align*}
    \varepsilon(h) := u\left (x(h) \right )-u(x_0)-h(\nabla u(x_0),e)_{g_{x_0}}-\frac{h^2}{2}( (\nabla^2 u(x_0))e,e)_{g_{x_0}}.
  \end{align*}
  Then, as before it is clear that $\varepsilon(0)=\varepsilon'(0)=0$ and
  \begin{align*}
    & \varepsilon'(h) = (\nabla u(x(h)),\dot x(h))_{x(h)}-(\nabla u(x_0),e)_{g_{x_0}}-h( (\nabla^2 u(x_0))e,e)_{g_{x_0}},\\
    & \varepsilon''(h) = ( (\nabla^2 u(x(h)))\dot x(h),\dot x(h))_{g_{x(h)}}-( (\nabla^2 u(x_0))e,e)_{g_{x_0}}.
  \end{align*}
  As before, we make use of the fact that $\dot x(h)$ is a parallel vector along $x(h)$, which leads to
  \begin{align*}
    |\varepsilon''(h)| = |  ( (\nabla^2 u(x(h)))\dot x(h),\dot x(h))_{g_{x(h)}}-( (\nabla^2 u(x_0))e,e)_{g_{x_0}}| \leq \|u\|_{C^\beta}h^{\beta-2}.	  
  \end{align*}	  
  Integrating this inequality twice (and using that $\varepsilon(0)=\varepsilon'(0)=0$) it follows that
  \begin{align*}
    |\varepsilon(h)| = |\varepsilon(h)-\varepsilon(0)| = \left | \int_0^h \varepsilon'(s)\;ds \right | \leq \|u\|_{C^\beta}h^\beta,
  \end{align*}	  
  which proves the second estimate.
  
\end{proof}

The next Lemma consists of a very important fact, namely, that the discrete difference operators $(\nabla_n)^1u$ and $(\nabla_n)^2u$ are ``consistent'' --i.e. they converge to the differential operators $\nabla u$ and $\nabla^2 u$. Furthermore, we have that the error made when estimating the derivatives by the discrete operator is a quantity controlled by the $C^\beta$ norm of  $u \in C^\beta_b(M)$.

\begin{lem}\label{lem:InterpolatedDerivativesAreCloseToRealDerivatives}
  Let $x \in \tilde G_n$ and $u\in C^\beta_b(M)$ then
  \begin{align*}
    |(\nabla_n)^1u(x)-\nabla u(x)|_{g_{x}} \leq C\|u\|_{C^\beta}\tilde h_n^{\beta-1}, \textnormal{ if } \beta \in (1,2],\\
    |(\nabla_n)^2 u(x)-\nabla^2 u(x)|_{g_{x}} \leq C\|u\|_{C^\beta}\tilde h_n^{\beta-2}, \textnormal{ if } \beta \in (2,3].	 	  
  \end{align*}	  
  Furthermore, if $C^\beta_b = C^1_b$ or $C^\beta_b = C^2_b$ then, we have, respectively
  \begin{align*}
    \lim \limits_{n\to \infty}\sup \limits_{x \in K\cap \tilde G_n} |(\nabla_n)^1u(x)-\nabla u(x)|_{g_{x}} = 0,\\
    \lim \limits_{n\to \infty}\sup \limits_{x \in K\cap \tilde G_n} |(\nabla_n)^2u(x)-\nabla^2 u(x)|_{g_{x}} = 0,	
  \end{align*}	  
  where $K$ is an arbitrary compact subset of $M$.
\end{lem}

\begin{proof}
  \emph{First estimate.} We may write
  \begin{align*}	
    \nabla u(x) = \sum \limits_{l=1}^d  \theta_l \hat V_{n,l}(x),
  \end{align*}	 
  where the numbers $\theta_1,\ldots,\theta_d$ are determined from the system of equations
  \begin{align*}
    (\nabla u(x),\hat V_{n,k}(x))_{g_x} = \sum\limits_{l=d}^d \theta_l (\hat V_{n,k}(x),\hat V_{n,l}(x))_{g_x}.
  \end{align*}
  Now, Proposition \ref{prop:Appendix Cbeta implies bound on Taylor remainder} says that
  \begin{align*}
    \left | \frac{u(\exp_{x}(V_{n,k}))-u(x)}{|V_{n,k}|_{g_x}} - \left (\nabla u(x),\hat V_{n,k} \right )_{g_x}\right |\leq C\|u\|_{C^\beta}|V_{n,k}|_{g_x}^{\beta-1},
  \end{align*}
  and, if $C^\beta_b = C^1_b$, it says that for any compact $K$,
  \begin{align*}
    \lim \limits_{n\to \infty} \sup \limits_{x\in K \cap \tilde G_n} \max \limits_{1\leq k\leq d}\left | \frac{u(\exp_{x}(V_{n,k}))-u(x)}{|V_{n,k}|_{g_x}} - \left (\nabla u(x),\hat V_{n,k} \right )_{g_x}\right | = 0,
  \end{align*}  
  the convergence in the limit being determined by  $K$, the continuity of $\nabla u$, and $M$. Then, 
  \begin{align*}
    |(\nabla_n)_{l}^1u(x)-\theta_l| & \leq C\|u\|_{C^\beta}\tilde h_n^{\beta-1} \;\;\; \forall\;x\in M,\textnormal{ if } \beta \in (0,1),\\
    \lim \limits_{n\to \infty}\sup \limits_{x \in K \cap \tilde G_n}|(\nabla_n)_{l}^1u(x)-\theta_l| & = 0,   \quad\quad\quad\quad\quad\ \forall\; K\subset\subset M, \textnormal{ if } C^\beta_b = C^1_b.
  \end{align*}
  The above holds for each $l=1,\ldots,d$. Combining these inequalities it is immediate that
  \begin{align*}
    |(\nabla_n)^1u(x)-\nabla u(x)|\leq C\|u\|_{C^\beta}\tilde h_n^{\beta-1},
  \end{align*}
  and, for $C^\beta_b = C^1_b$,
  \begin{align*}
    \lim \limits_{n\to \infty}\sup \limits_{x \in K\cap \tilde G_n}|(\nabla_n)^1u(x)-\nabla u(x)|  = 0.
  \end{align*}
     
  \noindent \emph{Second estimate.} First, we need an elementary observation about geodesics. Observe that
  \begin{align}\label{eqAppendix:Exponential Map Error Term 1}
    \exp_{\exp_x(V_{n,k}(x))}(V_{n,l}(x_k)) = \exp_{x}(V_{n,k}(x)+\Gamma_xV_{n,l}(x_k)+ \textnormal{(Error)}_0).
  \end{align}
  Where the term $\textnormal{(Error)}_0$ is term appearing due to possibly non-zero curvature. It turns out that this error term is at least a cubic error in terms of $\tilde h_n$, which is proved as follows: let $J(t)$ be the Jacobi field along the geodesic $\gamma(t) = \exp_x(t \hat V_{n,k})$ determined by $J(0)=0$ and $J(|V_{n,k}|_{g_x}) = \hat V_{n,l}(x_k)$. Then,  define $\sigma(t,s) \in (TM)_x $ by
  \begin{align*}
    \exp_{\gamma(t)}(s J(t) )  = \exp_{x}( \sigma(t,s)).
  \end{align*}
  Note that $\sigma\left ( |V_{n,k}|_{g_x},|V_{n,l}(x_k)|_{g_x} \right )$ must be equal to the argument in the exponential on the right hand side of \eqref{eqAppendix:Exponential Map Error Term 1}. Then, note that
  \begin{align*}
    \sigma(0,s) = 0,\;\;\forall\;s \Rightarrow \sigma(0,0)= \partial_{s}\sigma(0,0) =\partial_{ss}\sigma(0,0)=0.
  \end{align*}
  Furthermore, $\partial_t \sigma(0,0) = \hat V_{n,k}(x)$, so
  \begin{align*}
    \sigma(t,s) = t\hat V_{n,k}+st \partial_{ts}\sigma(0,0) + O((s^2+t^2)^{3/2}).
  \end{align*}
  Now, by contrasting the respective Jacobi and parallel transport equations, it can be shown that
  \begin{align*}
    |\partial_{ts}\sigma(0,0)-\Gamma_x \hat V_{n,k}(x_k)|\leq C\tilde h_n.
  \end{align*}
  Given that $|V_{n,k}|_{g_x},|V_{n,l}(x_k)|_{g_x}\leq h_n$, this leads to the bound
  \begin{align}\label{eqAppendix:Exponential Map Error Term 2}
    | \textnormal{(Error)}_0|_{g_x} \leq C\tilde h_n^3.
  \end{align}	  
  The constant $C$ depending only on the metric of $M$.

  Let us analyze the first three terms appearing in the second order difference $\delta u_{x}(V_{n,k}(x),V_{n,l}(x_k))$.  We consider the Taylor expansion and estimate the remainder via Proposition \ref{prop:Appendix Cbeta implies bound on Taylor remainder}. First of all, we have
  \begin{align*} 
    u(\exp_{\exp_x(V_{n,k}(x))}(V_{n,l}(x_k))) = u(\exp_{x}(V_{n,k}(x)+\Gamma_xV_{n,l}(x_k)+ \textnormal{(Error)}_0)).
  \end{align*}
  The estimate \eqref{eqAppendix:Exponential Map Error Term 2} guarantees in particular that $|V_{n,k}(x)+\Gamma_xV_{n,l}(x_k)+ \textnormal{(Error)}_0|\leq C\tilde h_n$. With this in mind, we apply Proposition \ref{prop:Appendix Cbeta implies bound on Taylor remainder} in order to obtain the expansion
  \begin{align*} 
    & u(\exp_{\exp_x(V_{n,k}(x))}(V_{n,l}(x_k)))\\
    & = u(x)+ (\nabla u(x),V_{n,k}(x)+\Gamma_xV_{n,l}(x_k)+ \textnormal{(Error)}_0)_{g_x}\\
    & \;\;\;\; + \tfrac{1}{2} \left (\nabla^2u(x)( V_{n,k}(x)+\Gamma_xV_{n,l}(x_k)+ \textnormal{(Error)}_0) , V_{n,k}(x)+\Gamma_xV_{n,l}(x_k)+ \textnormal{(Error)}_0 \right )_{g_x}\\
    & + \textnormal{(Error)},	
  \end{align*}
  where $\textnormal{(Error)}$, which denotes the remainder in the Taylor expansion, satisfies the bound 
  \begin{align*}
    | \textnormal{(Error)}| \leq C\|u\|_{C^\beta}\tilde h_n^\beta.  
  \end{align*}	  
 Expanding, we see that
  \begin{align*} 
    & u(\exp_{\exp_x(V_{n,k}(x))}(V_{n,l}(x_k)))\\
    & = u(x)+ (\nabla u(x),V_{n,k}(x))_{g_x}+(\nabla u(x),\Gamma_xV_{n,l}(x_k))_{g_x}\\
    & \;\;\;\; + \tfrac{1}{2} \left (\nabla^2u(x) V_{n,k}(x),V_{n,k}(x)\right )_{g_x}+\tfrac{1}{2} \left (\nabla^2u(x)\Gamma_xV_{n,l}(x_k),\Gamma_xV_{n,l}(x_k)\right )_{g_x}\\
    & \;\;\;\; + \left (\nabla^2u(x) V_{n,k}(x),\Gamma_xV_{n,l}(x_k)\right )_{g_x}\\ 	
    & \;\;\;\;+(\nabla u(x),\textnormal{(Error)}_0)_{g_x}+\frac{1}{2}\left (\nabla^2u(x)\textnormal{(Error)}_0,\textnormal{(Error)}_0\right )_{g_x}\\
    & \;\;\;\;+\left (\nabla^2u(x)\textnormal{(Error)}_0,V_{n,k}(x)+\Gamma_xV_{n,l}(x_k)\right )_{g_x}+ \textnormal{(Error)}.	
  \end{align*} 
  The terms involving a factor of $\textnormal{(Error)}_0$ may be absorbed into $\textnormal{(Error)}$. To see why, we use the estimate \eqref{eqAppendix:Exponential Map Error Term 2} and bound term by term
  \begin{align*}  
    |(\nabla u(x),\textnormal{(Error)}_0)_{g_x}| & \leq C\|u\|_{C^1}h_n^3,\\
    \left | \left (\nabla^2u(x)\textnormal{(Error)}_0,\textnormal{(Error)}_0\right )_{g_x}\right | & \leq C\|u\|_{C^2}\tilde h_n^{6},\\
    \left | \left (\nabla^2u(x)\textnormal{(Error)}_0,V_{n,k}(x)\right )_{g_x}\right | & \leq C\|u\|_{C^2}\tilde h_n^4,\\
    \left | \left (\nabla^2u(x)\textnormal{(Error)}_0,\Gamma_xV_{n,l}(x_k)\right )_{g_x}\right | & \leq C\|u\|_{C^2}\tilde h_n^4.	
  \end{align*}
  Since $\beta \geq 2$, each of the above terms is bounded by $C\|u\|_{C^\beta}\tilde h_n^\beta$. Then, absorbing the terms involving $\textnormal{(Error)}_0$ into $\textnormal{(Error)}$ we obtain 
 \begin{align*}
    & u(\exp_{\exp_x(V_{n,k}(x))}(V_{n,l}(x_k)))\\
    & = u(x)+ (\nabla u(x),V_{n,k}(x))_{g_x}+(\nabla u(x),\Gamma_xV_{n,l}(x_k))_{g_x}\\
    & \;\;\;\; + \tfrac{1}{2} \left (\nabla^2u(x) V_{n,k}(x),V_{n,k}(x)\right )_{g_x}+\tfrac{1}{2} \left (\nabla^2u(x)\Gamma_xV_{n,l}(x_k),\Gamma_xV_{n,l}(x_k)\right )_{g_x}\\
    & \;\;\;\; + \left (\nabla^2u(x) V_{n,k}(x),\Gamma_xV_{n,l}(x_k)\right )_{g_x}+ \textnormal{(Error)}.	
 \end{align*}
  As for the other two terms, we have
  \begin{align*}  	
    u(\exp_x(V_{n,k}(x))) & = u(x)+(\nabla u(x),V_{n,k}(x))_{g_x}\\
	& \;\;\;\;+\tfrac{1}{2} \left( \nabla^2u(x)V_{n,k}(x),V_{n,k}(x) \right )_{g_x}+\textnormal{(Error)},
  \end{align*}
  and
  \begin{align*}	
    u(\exp_{x}(\Gamma_{x}V_{n,l}(x_k))) & = u(x)+(\nabla u(x_0),\Gamma_x V_{n,l}(x_k))_{g_x}\\
	& \;\;\;\;+\tfrac{1}{2} \left( \nabla^2u(x)\Gamma_x V_{n,l}(x_k),\Gamma_x V_{n,l}(x_k) \right )_{g_x}+\textnormal{(Error)}.
  \end{align*}
  In each case, $|\textnormal{(Error)}|$ is no larger than $C\|u\|_{C^\beta}\tilde h_n^\beta$, thanks to Proposition \ref{prop:Appendix Cbeta implies bound on Taylor remainder}. 
  
  Combining the last three formulas, it follows that
  \begin{align*}
    \delta u_{x}(V_{n,k}(x),V_{n,l}(x_k))
  \end{align*}
  is equal to
  \begin{align*}
    & u(x)+(\nabla u(x),V_{n,k}(x))_{g_x}+(\nabla u(x),\Gamma_x(V_{n,l}(x_k))_{g_x}+\tfrac{1}{2}((\nabla^2 u(x))V_{n,k}(x),V_{n,k}(x))_{g_x}\\
    & +((\nabla^2u(x))V_{n,k}(x),\Gamma_xV_{n,l}(x_k))_{g_x}+\tfrac{1}{2}((\nabla^2u(x))\Gamma_xV_{n,l}(x_k),\Gamma_xV_{n,l}(x_k))_{g_x}\\
    & -u(x)-(\nabla u(x),V_{n,k}(x))_{g_x}-\tfrac{1}{2}((\nabla^2u(x))V_{n,k}(x),V_{n,k}(x))_{g_x}-u(x)-(\nabla u(x),\Gamma_x(V_{n,l}(x_k))_{g_x}\\
    &-\tfrac{1}{2}((\nabla^2 u(x))\Gamma_x(V_{n,l}(x_k),\Gamma_x(V_{n,l}(x_k))_{g_x}+u(x)+\textnormal{(Error)}.
  \end{align*}
  From the above, it is clear all but one of the terms in the first two lines above is cancelled out with a term in the last two lines. We then arrive at the formula
  \begin{align*}
    \delta u_{x}(V_{n,k}(x),V_{n,l}(x_k)) =  ((\nabla^2u(x))V_{n,k}(x),\Gamma_xV_{n,l}(x_k))_{g_x}+\textnormal{(Error)},
  \end{align*}
  where --thanks to Proposition \ref{prop:Appendix Cbeta implies bound on Taylor remainder}, as pointed out earlier-- we have
  \begin{align*}
    |\textnormal{(Error)}| \leq C\|u\|_{C^\beta}\tilde h_n^\beta. 
  \end{align*}  
  Then, solving the linear problem corresponding to $(\nabla_n)^2u(x)$ and $\nabla^2u(x)$ it follows that
  \begin{align*}
    |(\nabla_n)^2u(x)-\nabla^2 u(x)|\leq C\|u\|_{C^\beta}\tilde h_n^{\beta-2}.
  \end{align*}
  Finally, if $C^\beta_b = C^2_b$, the convergence of $(\nabla_n)^2u(x)$ to $\nabla^2u(x)$ follows analogously to the convergence of $\nabla_n u(x)$ to $\nabla u(x)$ for $C^\beta_b = C^1_b$,  we omit the details.
\end{proof}

Given the proof of Lemma \ref{lem:InterpolatedDerivativesAreCloseToRealDerivatives} it should be clear that the $L^\infty(\tilde G_n)$ norm of $(\nabla_n)^iu$ ($i=1,2$) is controlled by the appropriate $C^\beta$ norm of $u$ in a manner which is independent of $n$. This fact is the content of the next proposition.
\begin{prop}\label{prop: discrete derivatives are controlled by continuous derivatives}
  Let $x \in \tilde G_n$, then we have the estimates
  \begin{align*}
    |(\nabla_n)^1u(x)|_{g_x} & \leq C\|u\|_{C^1},\\
    |(\nabla_n)^2u(x)|_{g_x} & \leq C\|u\|_{C^2}.
  \end{align*}
  
\end{prop}

\begin{proof}[Proof of Proposition \ref{prop: discrete derivatives are controlled by continuous derivatives}]
   This is an immediate consequence of the previous proposition. Indeed, fix $u \in C^\beta_b(M)$ and $x\in \tilde G_n$. Then, we have
  \begin{align*}
    |(\nabla_n)^1u(x)|_{g_x} & \leq |(\nabla_n)^1u(x)-\nabla u(x)|_{g_x} +|\nabla u(x)|_{g_x},\;\; \beta \geq 1.\\
    |(\nabla_n)^2u(x)|_{g_x} & \leq |(\nabla_n)^2u(x)-\nabla^2 u(x)|_{g_x}+|\nabla^2 u(x)|_{g_x},\;\;\beta \geq 2.
  \end{align*}  
  Then, using the two estimates in Proposition \ref{prop:Appendix Cbeta implies bound on Taylor remainder}, we have
  \begin{align*}
    |(\nabla_n)^1u(x)|_{g_x} & \leq C\tilde h_n^{\min\{\beta-1,1\} } \|u\|_{C^\beta}+\|u\|_{C^1} \leq C\|u\|_{C^\beta},\;\;\beta \geq 1.\\
    |(\nabla_n)^2u(x)|_{g_x} & \leq C\tilde h_n^{\min\{\beta-2,1\}}\|u\|_{C^\beta}+\|u\|_{C^2} \leq C\|u\|_{C^\beta},\;\;\beta \geq 2.
  \end{align*}

\end{proof}

The next proposition yields a quantitative control on the ``continuity'' of $(\nabla_n)^{i}u$ in terms of the regularity of the original function $u$. As one may expect, if $\nabla u(x)$ and $\nabla^2u(x)$ are H\"older continuous in $M$, then $(\nabla_n)^1u$ and $(\nabla_n)^2u$ enjoy a respective modulus of ``continuity'' on $\tilde G_n$, this being uniform in $n$. 
\begin{prop}\label{prop:InterpolatedDerivativesAreRegular}
  Consider points $x,y \in M \setminus \tilde G_n$ and $\hat y$, $\hat x$ the corresponding points in $\tilde G_n$ with $d(x,\tilde G_n)=d(x,\hat x)$, $d(y,\tilde G_n)=d(y,\hat y)$, we have the following estimates with a universal $C$.

  \begin{enumerate}
  \item For $1\leq \beta\leq 2$,
  \begin{align*}
    & |(\nabla_n)_a^1 u(\hat x)-(\nabla_n)_a^1 u(\hat y)| \leq C\|u\|_{C^\beta}d(\hat x,\hat y)^{\beta-1}.
  \end{align*}
  \item For $2\leq \beta\leq3$,
  \begin{align*}
    & | (\nabla_n)_{ab}^2 u(\hat x)-(\nabla_n)_{ab}^2u(\hat y)| \leq C\|u\|_{C^\beta}d(\hat x,\hat y)^{\beta-2}.
  \end{align*}
  \end{enumerate}
  
\end{prop}

\begin{proof}
  If $\hat x=\hat y$ both inequalities are trivial and there is nothing to prove, so let us assume $\hat x,\hat y$ are two different points in $\tilde G_n$. In this case, and thanks to \eqref{eqFinDimApp: lambda def}, we have
  \begin{align}	
    d(\hat x,\hat y) \geq \lambda \tilde h_n.\label{eqAppendix:Different points are far apart}	
  \end{align}
  \emph{First estimate.} The triangle inequality yields,
  \begin{align*}  	
    & |(\nabla_n)_a^1 u(\hat x)-(\nabla_n)_a^1 u(\hat y)| \\
    & \leq |(\nabla_n)_a^1 u(\hat x)-\nabla_a u(\hat x)|+ |\nabla_a u(\hat x)-\nabla_a u(\hat y)|+ |\nabla_a u(\hat y)-(\nabla_n)_a^1 u(\hat y)|.	
  \end{align*}
  Let us estimate each of the three terms on the right. The middle term is straightforward,
  \begin{align*}	
    |\nabla_a u(\hat x)-\nabla_a u(\hat y)|\leq C\|u\|_{C^\beta}d(\hat x,\hat y)^{\beta-1}.
  \end{align*}	
  For the first and third term, we use the first part of Lemma \ref{lem:InterpolatedDerivativesAreCloseToRealDerivatives}, which says that
  \begin{align*}	
    & |(\nabla_n)_a^1 u(\hat x)-\nabla_a u(\hat x)| \leq C\|u\|_{C^\beta}\tilde h_n^{\beta-1},\\
    & |(\nabla_n)_a^1 u(\hat y)-\nabla_a u(\hat y)| \leq C\|u\|_{C^\beta}\tilde h_n^{\beta-1}.
  \end{align*}	  
  Using \eqref{eqAppendix:Different points are far apart} it follows that
  \begin{align*}	
    & |(\nabla_n)_a^1 u(\hat x)-\nabla_a u(\hat x)| \leq C\|u\|_{C^\beta}d(\hat x,\hat y)^{\beta-1},\\
    & |(\nabla_n)_a^1 u(\hat y)-\nabla_a u(\hat y)| \leq C\|u\|_{C^\beta}d(\hat x,\hat y)^{\beta-1}.
  \end{align*}	
  Combining the bounds for the three terms the first estimate follows.\\
  
  \noindent \emph{Second estimate.} As before, we start by breaking the difference in three parts, so
  \begin{align*}
    & |(\nabla_n)_{ab}^2 u(\hat x)-(\nabla_n)_{ab}^2 u(\hat y)|\\
    & \leq |(\nabla_n)_{ab}^2 u(\hat x)-\nabla_{ab}^2 u(\hat x)|+|\nabla_{ab}^2 u(\hat x)-\nabla_{ab}^2 u(\hat y)|+|\nabla_{ab}^2 u(\hat y)-(\nabla_n)_{ab}^2 u(\hat y)|.
  \end{align*}
  The middle term is bounded by
  \begin{align*}
    |\nabla_{ab}^2 u(\hat x)-\nabla_{ab}^2 u(\hat y)| \leq C\|u\|_{C^\beta}d(\hat x,\hat y)^{\beta-2}.  	  
  \end{align*}	  
  Next, thanks to the second part of Lemma \ref{lem:InterpolatedDerivativesAreCloseToRealDerivatives},
  \begin{align*}	
    & |(\nabla_n)_{ab}^2 u(\hat x)-\nabla_{ab}^2 u(\hat x)| \leq C\|u\|_{C^\beta}\tilde h_n^{\beta-2},\\
    & |(\nabla_n)_{ab}^2 u(\hat y)-\nabla_{ab}^2 u(\hat y)| \leq C\|u\|_{C^\beta}\tilde h_n^{\beta-2}.
  \end{align*}	  
  Using \eqref{eqAppendix:Different points are far apart} again, we conclude that
  \begin{align*}	
    & |(\nabla_n)_{ab}^2 u(\hat x)-\nabla_{ab}^2 u(\hat x)| \leq C\|u\|_{C^\beta}d(\hat x,\hat y)^{\beta-2},\\
    & |(\nabla_n)_{ab}^2 u(\hat y)-\nabla_{ab}^2 u(\hat y)| \leq C\|u\|_{C^\beta}d(\hat x,\hat y)^{\beta-2}.
  \end{align*}	  
  As in the previous case, the combined bounds for the three terms yields the estimate.
\end{proof}


\section{The Proof of Proposition \ref{prop:RegularityOfEnTnDistToGn}}\label{sec:ProofOfRegAtDistToBoundary}

This section is dedicated to proving Proposition \ref{prop:RegularityOfEnTnDistToGn}, which we re-record right here for the reader's convenience. 

\begin{prop*}
  Let $x \in M\setminus \tilde G_n$ and $u\in C^\beta$. There is a universal constant $C$ such that the following bounds hold. First, if $0\leq\beta<1$, 
  \begin{align*}	
    |\nabla (E_n^\beta\circ T_n)u(x)|\leq C\|u\|_{C^\beta}d(x,\tilde G_n)^{\beta-1}.
  \end{align*}	
  If $1\leq\beta<2$, we have
  \begin{align*}	
    |\nabla^2(E_n^\beta\circ T_n)u(x)|\leq C\|u\|_{C^\beta}d(x,\tilde G_n)^{\beta-2}.
  \end{align*}	
  Finally, if $2\leq\beta<3$, we have
  \begin{align*}	
    |\nabla^3(E_n^\beta\circ T_n)u(x)|\leq C\|u\|_{C^\beta}d(x,\tilde G_n)^{\beta-3}.
  \end{align*}	
 
\end{prop*}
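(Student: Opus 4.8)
The plan is to exploit the structure of the Whitney extension $f = \tilde E_n^\beta\circ\tilde T_n u$ away from $\tilde G_n$, where on $M\setminus\tilde G_n$ we have $f(x)=\sum_k p^\beta_{(u,k)}(x)\phi_{n,k}(x)$, with all sums locally finite (at most $N$ nonzero terms near any point, by Lemma \ref{lem:CubesOnM}). Fix $x\in M\setminus\tilde G_n$ and set $\delta_x := d(x,\tilde G_n)$. The key quantitative input is the differentiation identity \eqref{eqFinDim:Partition of Unity Differentiation Identity}, $\sum_k\nabla^i\phi_{n,k}\equiv 0$ for $i=1,2,3$, combined with the derivative bounds $|\nabla^i\phi_{n,k}(x)|_{g_x}\le C/(\diam P_{n,k}^*)^i$ from Lemma \ref{lem:PartitionOfUnityGn} and Remark \ref{rem: Whitney local diameter of balls} which says $\tfrac17\diam P_{n,k}^*\le \delta_x\le 6\diam P_{n,k}^*$ for $k\in K_x$. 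The idea is that although each $p^\beta_{(u,k)}$ may be large, the \emph{differences} $p^\beta_{(u,k)}(x)-p^\beta_{(u,k')}(x)$ for $k,k'\in K_x$ are small — of order $\|u\|_{C^\beta}\delta_x^{\beta}$ — so that subtracting a fixed reference polynomial (say $p^\beta_{(u,k_0)}$ for a distinguished $k_0\in K_x$, or better, the ``true'' local polynomial of $u$ at the nearest point $\hat x$) before differentiating $\phi_{n,k}$ produces the stated decay.

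\textbf{Main steps.} First I would treat $0\le\beta<1$: here $p^\beta_{(u,k)}(x)=u(\hat y_{n,k})$ is a constant, so $\nabla f(x)=\sum_{k\in K_x}u(\hat y_{n,k})\nabla\phi_{n,k}(x)=\sum_{k\in K_x}(u(\hat y_{n,k})-u(\hat x))\nabla\phi_{n,k}(x)$ using \eqref{eqFinDim:Partition of Unity Differentiation Identity}. Then $|u(\hat y_{n,k})-u(\hat x)|\le\|u\|_{C^\beta}d(\hat x,\hat y_{n,k})^\beta\le C\|u\|_{C^\beta}\delta_x^\beta$ by \eqref{eqFinDim:hat x and hat Ynk distance is bounded by distance to tilde Gn}, and $|\nabla\phi_{n,k}(x)|_{g_x}\le C\delta_x^{-1}$, so summing the $\le N$ terms gives $|\nabla f(x)|\le C\|u\|_{C^\beta}\delta_x^{\beta-1}$. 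Second, for $1\le\beta<2$, differentiate $f$ twice, expand via Leibniz into a $\phi_{n,k}$-term, two $\nabla\phi_{n,k}$-terms, and a $\nabla^2\phi_{n,k}$-term; in each group subtract the fixed local linear polynomial $\ell(\nabla^1_nu(\hat x),\hat x;\cdot)$ (plus constant) using \eqref{eqFinDim:Partition of Unity Differentiation Identity}, bound the resulting scalar coefficients $|p^\beta_{(u,k)}(x)-p^\beta_{(u,\hat x)}(x)|\le C\|u\|_{C^\beta}\delta_x^\beta$ exactly as in the proof of Proposition \ref{prop:BoundaryEstimate} for $\beta\in[1,2)$ (this invokes Proposition \ref{prop:Appendix Cbeta implies bound on Taylor remainder}, Lemma \ref{lem:InterpolatedDerivativesAreCloseToRealDerivatives}, and \eqref{eqFinDimApp: lambda def}), and for the $\nabla\phi_{n,k}$ groups also bound $|\nabla(p^\beta_{(u,k)}-p^\beta_{(u,\hat x)})(x)|\le C\|u\|_{C^\beta}\delta_x^{\beta-1}$; each group then contributes $\le C\|u\|_{C^\beta}\delta_x^\beta\cdot\delta_x^{-2}=C\|u\|_{C^\beta}\delta_x^{\beta-2}$. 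Third, for $2\le\beta<3$ the argument is identical with one more derivative: Leibniz-expand $\nabla^3 f$, use the quadratic reference polynomial $q(\nabla^2_nu(\hat x),\hat x;\cdot)$ together with \eqref{eqFinDim:Partition of Unity Differentiation Identity} for $i=1,2,3$, and bound the coefficient differences by $C\|u\|_{C^\beta}\delta_x^\beta$ and their first/second derivatives by $C\|u\|_{C^\beta}\delta_x^{\beta-1}$, $C\|u\|_{C^\beta}\delta_x^{\beta-2}$ respectively; matched against $|\nabla^j\phi_{n,k}|\le C\delta_x^{-j}$ the total is $C\|u\|_{C^\beta}\delta_x^{\beta-3}$.

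\textbf{Expected obstacle.} The routine but delicate part is the bookkeeping for $\beta\in[2,3)$: controlling $\nabla^i$ of the local interpolators $\ell(\nabla^1_nu(\hat y_{n,k}),\hat y_{n,k};\cdot)$ in the exponential chart at a nearby base point, since the geodesic normal coordinates introduce lower-order curvature corrections when one differentiates $\ell$ more than once (away from its own base point the ``linear'' function $\ell$ is not literally affine). This is where one must use the uniform $C^3$ smoothness of $\exp_{\hat y_{n,k}}^{-1}$ on $P_{n,k}^*$ (Remark \ref{rem:local interpolators are well defined}) and the chain rule, together with Proposition \ref{prop: discrete derivatives are controlled by continuous derivatives} bounding $|\nabla^i_nu(\hat y_{n,k})|$, to absorb those corrections; conceptually it is standard (it mirrors \cite[Chapter 6]{Stei-71}), but the Riemannian estimates on each term require care. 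Once the coefficient-difference bounds and the partition-of-unity derivative bounds are in hand, the final summation over $k\in K_x$ using $\#K_x\le N$ is immediate in all three cases.
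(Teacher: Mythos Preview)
Your proposal is correct and follows essentially the same approach as the paper: differentiate the Whitney sum term by term, use the differentiation identity \eqref{eqFinDim:Partition of Unity Differentiation Identity} to subtract a reference value/polynomial (the paper uses $u(\hat x)$, $l(\nabla_n^1 u(\hat x),\hat x;\cdot)$, etc.), and then bound each Leibniz group via the Taylor-remainder estimate (Proposition \ref{prop:Appendix Cbeta implies bound on Taylor remainder}), the consistency of the discrete derivatives (Lemma \ref{lem:InterpolatedDerivativesAreCloseToRealDerivatives}), and the partition-of-unity derivative bounds. One small simplification in the paper that you may find convenient: for the group where all derivatives fall on the interpolator (your ``$\phi_{n,k}$-term''), the paper does not bother subtracting a reference---it simply bounds $|\nabla^2_{ab}l(\cdot)|\le C\|u\|_{C^\beta}$ via Proposition \ref{prop: local interpolation operators}, obtaining $C\|u\|_{C^\beta}(d(x,\tilde G_n)^{\beta-2}+1)$, and then absorbs the $+1$ into the leading term since $d(x,\tilde G_n)$ is uniformly bounded.
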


\begin{proof}
  As done throughout Section \ref{sec: finite dimensional approximations}, for the sake of brevity we shall write $f=\pi_n^\beta u$.
  
  \emph{The case $\beta \in [0,1)$.} Since the sum defining $f$ is locally finite, we may differentiate term by term, which yields
  \begin{align*}
    \nabla f(x) & = \sum \limits_{k} u(\hat y_{n,k})\nabla \phi_{n,k}(x).
  \end{align*}
  Using \eqref{eqFinDim:Partition of Unity Differentiation Identity} with $i=1$ we may rewrite the above as
  \begin{align*}
    \nabla f(x) & = \sum \limits_{k}(u(\hat y_{n,k})-u(\hat x))\nabla \phi_{n,k}(x),\;\;\forall\;x\in M \setminus \tilde G_n.
  \end{align*}
  Then, since the only non-zero terms are those with $k \in K_x$ ($K_x$ was introduced in Lemma \ref{lem:CubesOnM}), 
  \begin{align*}	  
    |\nabla f(x)|_{g_x} & \leq \sum \limits_{k}|u(\hat y_{n,k})-u(\hat x)| |\nabla \phi_{n,k}(x)|_{g_x}\\
	& \leq N \sup\limits_{k \in K_x}|u(\hat y_{n,k})-u(\hat x)||\nabla \phi_{n,k}(x)|_{g_x}.
  \end{align*}
  For $k\in K_x$, using Remark \ref{rem: Whitney local diameter of balls}, and the H\"older regularity of $u$ one can check that
  \begin{align*}
    |u(\hat y_{n,k})-u(\hat x)| |\nabla \phi_{n,k}(x)|_{g_x} & \leq  C\|u\|_{C^\beta}d(x,\tilde G_n)^\beta d(x,\tilde G_n)^{-1}.
  \end{align*}
  From here, it follows that
  \begin{align*}
    |\nabla f(x)| & \leq C\|u\|_{C^\beta}d(x,\tilde G_n)^{\beta-1}.
  \end{align*}
  \emph{The case $\beta\in [1,2)$.} This time, we shall compute the Hessian $\nabla^2f$ using a local system of coordinates $\{x^{1},\ldots,x^{d}\}$. Then, for any pair of indices $a,b$ we have
  \begin{align*}
    \nabla^2_{ab}\phi = \partial_{x_ax_b}^2\phi- \sum \limits_{k=1}^d\Gamma_{ab}^k\partial_{x_k}\phi.
  \end{align*}
  Then
  \begin{align*}
    \nabla^2_{ab} f(x) & = \sum \limits_{k} \nabla^2_{ab}\left ( (u(\hat y_{n,k})+l( \nabla^1_n u (\hat y_{n,k}),\hat y_{n,k};x)) \phi_{n,k}(x) \right ).
  \end{align*}		
  We expand each term using the Leibniz rule, and conclude $\nabla^2_{ab}f(x)$ is equal to  
  \begin{align*}
    \textnormal{I}(x)+\textnormal{II}(x)+\textnormal{III}(x),
  \end{align*}
  where, for the sake of brevity, we have written
  \begin{align*}	
        \textnormal{I}(x) & = \sum \limits_{k} (u(\hat y_{n,k})+l( \nabla^1_n u (\hat y_{n,k}),\hat y_{n,k};x) )\nabla^2_{ab}\phi_{n,k}(x),\\
        \textnormal{II}(x) & = \sum \limits_{k} \nabla_a l( \nabla^1_n u (\hat y_{n,k}),\hat y_{n,k};x) \nabla_b \phi_{n,k}(x)+\sum \limits_{k} \nabla_b \phi_{n,k}(x) \nabla_a l( \nabla^1_n u (\hat y_{n,k}),\hat y_{n,k};x),\\					 	  
        \textnormal{III}(x) & =\sum \limits_{k} \nabla^2_{ab}l( \nabla^1_n u (\hat y_{n,k}),\hat y_{n,k};x) \phi_{n,k}(x).
  \end{align*}
  Since $x\in M \setminus \tilde G_n$, we can use \eqref{eqFinDim:Partition of Unity Differentiation Identity} with $i=1,2$ to obtain
  \begin{align*}
    & \sum \limits_{k} (u(\hat y_{n,k})+l( \nabla^1_n u (\hat y_{n,k}),\hat y_{n,k};x) )\nabla_{ab}^2\phi_{n,k}(x) \\
    & = \sum \limits_{k} (u(\hat y_{n,k})+l( \nabla^1_n u (\hat y_{n,k}),\hat y_{n,k};x) - u(\hat x) )\nabla_{ab}^2\phi_{n,k}(x),
  \end{align*}	
  and
  \begin{align*}
    & \sum \limits_{k} \nabla_a l( \nabla^1_n u (\hat y_{n,k}),\hat y_{n,k};x) \nabla_b \phi_{n,k}(x) \\
    & = \sum \limits_{k} \left (  \nabla_a l( \nabla^1_n u (\hat y_{n,k}),\hat y_{n,k};x) -\nabla_a l( \nabla^1_n u (\hat x),\hat x;x) \right ) \nabla_b \phi_{n,k}(x).
  \end{align*}  
  Let us bound each of these. The triangle inequality says
  \begin{align*}
    & |u(\hat y_{n,k})+l( (\nabla_n)^1u (\hat y_{n,k}),\hat y_{n,k};x)-u(x)|\\
    & \leq |u(\hat y_{n,k})+l( \nabla u (\hat y_{n,k}),\hat y_{n,k};x)-u(x)|+|l((\nabla_n)^1u (\hat y_{n,k}),\hat y_{n,k};x)-l( \nabla u (\hat y_{n,k}),\hat y_{n,k};x)|.
  \end{align*}	
  By Proposition \ref{prop:Appendix Cbeta implies bound on Taylor remainder} the first term on the right is no larger than $C\|u\|_{C^\beta}d(x,\hat y_{n,k})^\beta$. On the other hand, from the definition of $l(\cdot,\cdot;\cdot)$, it is immediate that the second term is no larger than
  \begin{align*}	
    & |(\nabla_n)^1u (\hat y_{n,k})- \nabla u (\hat y_{n,k})|_{g_{\hat y_{n,k}}}d(x,\hat y_{n,k}).	 
  \end{align*}
  Now, Lemma \ref{lem:InterpolatedDerivativesAreCloseToRealDerivatives} says that $|(\nabla_n)^1u (\hat y_{n,k})- \nabla u (\hat y_{n,k})|_{g_{\hat y_{n,k}}} \leq C\|u\|_{C^\beta}\tilde h_n^{\beta-1}$. Noting that $d(x,\hat y_{n,k})$ is no larger than $C\tilde h_n$ for $x\in P_{n,k}^*$, we obtain the estimate
  \begin{align*}
    |(\nabla_n)^1u (\hat y_{n,k})- \nabla u (\hat y_{n,k})|_{g_{\hat y_{n,k}}} \leq C\|u\|_{C^\beta}d(x,\hat y_{n,k})^{\beta-1}.	  
  \end{align*}	  
  Combining the last three estimates, we conclude that  
  \begin{align*}
    & |u(\hat y_{n,k})+l( (\nabla_n)^1u (\hat y_{n,k}),\hat y_{n,k};x)-u(x)|\leq C\|u\|_{C^\beta}d(x, \tilde G_n)^\beta,\;\;\forall\;x\in P_{n,k}^*.
  \end{align*}
  Using the estimates for the size of $\nabla^2\phi_{n,k}$, the above implies that
  \begin{align*}
    & |u(\hat y_{n,k})+l( (\nabla_n)^1u (\hat y_{n,k}),\hat y_{n,k};x)-u(x)||\nabla^2\phi_{n,k}(x)| \leq C\|u\|_{C^\beta}d(x,\tilde G_n)^{\beta-2},\;\;\forall\;x\in P_{n,k}^*.
  \end{align*}
  Finally, let us recall that the only nonzero terms appearing in the sum $\textnormal{I}(x)$ are those with $k\in K_x$ (i.e. $x \in P_{n,k}^*$), and that there at most $N$ of these terms. Then, we conclude that
  \begin{align*}
    \textnormal{I}(x) \leq C\|u\|_{C^\beta}d(x,\tilde G_n)^{\beta-2}.
  \end{align*}
  Let us now bound $\textnormal{II}(x)$, observe that	
  \begin{align*}
    \left | \nabla_a l( (\nabla_n)u (\hat y_{n,k}),\hat y_{n,k};x)-\nabla_a l( \nabla^1_n u (\hat x),\hat x;x)\right | \leq C\|u\|_{C^\beta}d(\hat x,\hat y_{n,k})^{\beta-1},\;\;\forall\;x\in P_{n,k}^*.
  \end{align*}
  Therefore
  \begin{align*}
    \sup \limits_{x\in P_{n,k}^*}\left | \nabla_a l( \nabla^1_n u (\hat y_{n,k}),\hat y_{n,k};x)-\nabla_a l( \nabla^1_n u (\hat x),\hat x;x)\right | & \leq C\|u\|_{C^\beta}d(x,\tilde G_n)^{\beta-1}.
  \end{align*}	 
  As before, the only nonzero terms adding up to $\textnormal{II}(x)$ are those with $x \in P_{n,k}^*$, therefore, the above bound implies that
  \begin{align*}
    \left | \sum \limits_{k} \nabla_a l( \nabla^1_n u (\hat y_{n,k}),\hat y_{n,k};x) \nabla_b \phi_{n,k}(x) \right | & \leq \sum \limits_{k\in K_x}\|u\|_{C^\beta}d(x,\tilde G_n)^{\beta-1}Cd(x,\tilde G_n)^{-1}\\
    & \leq CN \|u\|_{C^{\beta}}d(x,\tilde G_n)^{\beta-2}.	
  \end{align*}
  Therefore,
  \begin{align*}
    \textnormal{II}(x) \leq C \|u\|_{C^{\beta}}d(x,\tilde G_n)^{\beta-2}.	
  \end{align*}
  It remains to bound $\textnormal{III}(x)$. According to Proposition \ref{prop: local interpolation operators} and Proposition \ref{prop: discrete derivatives are controlled by continuous derivatives},
  \begin{align*}
    |\nabla_{ab}^2l( \nabla^1_n u (\hat y_{n,k}),\hat y_{n,k};x)| \leq C\|u\|_{C^\beta(M)}.
  \end{align*}
  Therefore, using \eqref{eqFinDim:Size Set Of Covering Cubes} (from Lemma \ref{lem:CubesOnM}) it follows that
  \begin{align*}
    \left | \sum \limits_{k} \nabla_{ab}^2l( \nabla^1_n u (\hat y_{n,k}),\hat y_{n,k};x) \phi_{n,k}(x) \right | & \leq C\sum \limits_{k\in K_x}\|u\|_{C^\beta}\phi_{n,k}(x)\\
	& \leq CN \|u\|_{C^\beta}.
  \end{align*}
  Gathering the last three estimates, we conclude that
  \begin{align*} 
    |\nabla^2_{ab}f(x)|\leq C\|u\|_{C^\beta}(d(x,\tilde G_n)^{\beta-2}+1).
  \end{align*}
  Moreover, since the indices $a,b$ were arbitrary, and since $d(x,\tilde G_n)$ is bounded from above for $x \in M \setminus \tilde G_n$ by a constant $C$, we conclude that
  \begin{align*} 
    |\nabla^2f(x)|\leq C\|u\|_{C^\beta}d(x,\tilde G_n)^{\beta-2}.
  \end{align*}
  \emph{The case $\beta \in [2,3)$.} The proof is entirely analogous to the previous case, and we only highlight the overall steps of the proof: as before, we pick a local system of coordinates $\{x_1,\ldots,x_d\}$ and use the identity
  \begin{align*}
    \nabla^3_{abc}f(x) & =  \sum \limits_{k} \nabla_{abc}^3\left [ \left ( u(\hat y_{n,k})+l( \nabla^1_n u (\hat y_{n,k}),\hat y_{n,k};x) + q(\nabla^2_nu (\hat y_{n,k} ),\hat y_{n,k};x) \right )\phi_{n,k}(x) \right ],
  \end{align*}
  which  holds for any three indices $a,b,$ and $c$. The expression on the right may be expanded via Leibniz rule, resulting in terms mixing various derivatives of $\phi_{n,k}$, $l( \nabla^1_n u (\hat y_{n,k}),\hat y_{n,k};\cdot )$, and $q(\nabla^2_n u (\hat y_{n,k}),\hat y_{n,k};\cdot )$. 

  It can then be checked that $\nabla^{3}_{abc}f(x)$ is given by a sum in $k$ of terms involving $\phi_{n,k}$ and values of $u$ on $\tilde G_n$ --in a manner analogue to the case $\beta \in [1,2)$. Now, to bound each of the resulting terms we will use \eqref{eqFinDim:Partition of Unity Differentiation Identity} with $i=1,2$ as before, but this time also with $i=3$. The bounds will follow by applying at difference instances Propositions \ref{prop:Appendix Cbeta implies bound on Taylor remainder} and \ref{prop: discrete derivatives are controlled by continuous derivatives}, as well as Lemma \ref{lem:InterpolatedDerivativesAreCloseToRealDerivatives}. All throughout, we will make us of the fact that the only non-zero terms appearing in the sums are those with $k\in P_{n,k}^*$. At the end, we arrive at the bound,
  \begin{align*} 
    |\nabla^3_{abc}f(x)|\leq C\|u\|_{C^\beta}(d(x,\tilde G_n)^{\beta-3}+1),
  \end{align*}	 
  which holds for any choice of the indices $a,b$ and $c$. This means that
  \begin{align*} 
    |\nabla^3 f(x)|\leq C\|u\|_{C^\beta}d(x,\tilde G_n)^{\beta-3},
  \end{align*}	 
  where we have used again that $d(x,\tilde G_n)$ is bounded from above for $x \in M \setminus \tilde G_n$.
\end{proof}

 
\bibliography{../refs}
\bibliographystyle{plain}
\end{document}